\renewcommand{\Im}{\operatorname{Im}}
\newcommand{\sech}{\operatorname{sech}}
\newcommand{\defeq}{\stackrel{\rm{def}}{=}}
\newtheorem{theorem}{Theorem}[section]
\newtheorem{property}{Spectral Property}%[section]
\newtheorem{definition}[theorem]{Definition}
\newtheorem{proposition}{Proposition}[section]
\newtheorem{remark}[theorem]{Remark}%[section]
\numberwithin{equation}{section}
\newcommand{\ds}{\displaystyle}
\begin{document}

\title[Stable blowup in NLS in high dimensions]
{Blow-up dynamics and spectral property\\
in the $L^2$-critical nonlinear Schr\"odinger equation\\
in high dimensions}

\author[K.Yang, S. Roudenko, Y. Zhao]{Kai Yang, Svetlana Roudenko and Yanxiang Zhao}

\address{George Washington University}

\subjclass[2010]{Primary: 35Q55, 35Q40; secondary: 35P30}
%    The 2010 edition of the Mathematics Subject Classification is
%    now available.  If you are citing a classification from the
%    new scheme, use the following input coding instead.
%\subjclass[2010]{Primary }

\keywords{stable blow-up dynamics, NLS equation, log-log blow-up, spectral property, dynamic rescaling method}

\date{} 

\begin{abstract}
We study stable blow-up dynamics in the $L^2$-critical nonlinear Schr\"{o}dinger equation in high dimensions. 
First, we show that in dimensions $d=4$ to $d=12$ generic blow-up behavior confirms the {\it log-log} regime in our numerical simulations {under the radially symmetric assumption}, and asymptotic analysis, including the log-log rate and the convergence of the blow-up profiles to the rescaled ground state; this matches the description of the stable blow-up regime in 
%the dimension $d =2$ (for 
the $2d$ cubic NLS equation.

Next, we address the question of rigorous justification of the {\it log-log} dynamics in higher dimensions ($d \geq5$), at least for the initial data with the mass slightly larger than the mass of the ground state, for which the spectral conjecture has yet to be proved,
see \cite{MR2005} and \cite{FMR2006}. We give a numerically-assisted proof of the spectral property for the dimensions from $d=5$ to $d=12$, and a modification of it in dimensions $2 \leq d \leq 12$. This, combined with previous results of Merle-Rapha\"el, proves the {\it log-log} stable blow-up dynamics in dimensions $d \leq 10$ and radially stable for $d \leq 12$.
\end{abstract}

\maketitle

\tableofcontents

\section{Introduction}

We consider the Cauchy problem of the $L^2$-critical nonlinear Schr\"{o}dinger (NLS) equation:
\begin{align}\label{NLS1}
\begin{cases}
iu_t+\Delta u + |u|^{\frac{4}{d}}u=0, \qquad (t,x) \in \mathbb{R} \times \mathbb{R}^d \\
u(x,0)=u_0 \in H^1(\mathbb{R}^d).
\end{cases}
\end{align}

The local well-posedness of the equation (\ref{NLS1}) in $H^1$ is given by Ginibre and Velo in \cite{GV1979}, see also \cite{Ca2003}. It shows that for $u_0 \in H^1(\mathbb{R}^d)$, there exists $0<T\leq \infty$ such that there is a unique solution $u(t) \in \mathbb{C}([0,T),H^1(\mathbb{R}^d))$. We say the solution exists globally in time if $T=\infty$, and the solution blows up in finite time if $T<\infty$ and $\ds \limsup\limits_{t\,\nearrow\, T} \|\nabla u(t)\|_{L^2}=\infty$.

During their lifespan, the solutions $u(x,t)$ of the Cauchy problem \eqref{NLS1} satisfy the following conservation laws
\begin{align} \label{conservation}
&\textit{Mass}: \quad  M(u(x,t)) \defeq \int |u(x,t)|^2 \, dx = \int |u_0(x)|^2 \, dx, \\ % = M(u_0), \\
&\textit{Energy}:  \quad  E(u(x,t))\defeq \frac{1}{2}\int|\nabla u(x,t)|^2 \, dx -\frac{1}{2+\frac{4}{d}}\int |u(x,t)|^{2+\frac{4}{d}} \, dx = E(u_0),\\
&\textit{Momentum}:  \quad P(u(x,t)) \defeq \Im \left( \int\nabla u (x,t) \bar{u}(x,t) \, dx  \right) = \Im \left( \int \nabla u_0(x) \bar{u}_0(x) \, dx \right). %= P(u_0).
\end{align}

Substituting $u(t,x)=e^{it}Q(x)$ into \eqref{NLS1}, we obtain a special class of periodic solutions with $Q$ solving
\begin{equation}\label{GS}
-\Delta Q +Q -|Q|^{\frac{4}{d}}Q=0.
\end{equation}
The equation \eqref{GS} exhibits many solutions, even when restricted to those that are smooth and vanishing-at-infinity. In this work we are only interested in real positive solutions. The existence and uniqueness of a real, positive, vanishing-at-infinity solution of (\ref{GS}) are obtained in \cite{BL1983} and \cite{Kwong1989}, and this solution is referred to as the ground state solution, which we also denote by $Q$. Note that the ground state solution is radially symmetric $Q=Q(r)$, moreover, it is exponentially decaying at infinity, \cite{BL1983}. For the purpose of this paper, we will simply say $Q\in H^1(\mathbb{R}^d)$. In the dimension $d=1$, the ground state solution of (\ref{GS}) is explicit
\begin{equation}\label{E:Q-1d}
Q(x)=3^{1/4} \sech^{1/2}(2x).
\end{equation}
While in dimensions $d\geq 2$, the ground state is not explicit, its properties are known, and various numerical methods (e.g., renormalization method or shooting method, see, for example, the monograph \cite{F2016} and reference therein) produce the ground state $Q$ numerically.

The importance of the ground state comes into play when one wants to understand the long time behavior of solutions. In 1983, Weinstein \cite{W1983} showed that solutions exist globally in time if $\|u_0\|_{L^2}<\|Q\|_{L^2}$. By convexity arguments on a finite variance, it is known that solutions blow up in finite time if $E[u_0]<0$, see \cite{VPT1971}, \cite{Z1972} or \cite{G1977}. Thus, solutions with $\|u_0\|_{L^2} \geq \|Q\|_{L^2}$ may blow up in finite time. Moreover, the minimal mass blow-up solutions happen exactly at the threshold $\|u_0\|_{L^2} = \|Q\|_{L^2}$, and the classification of all minimal mass blow-up solutions was done by Merle in \cite{M-CPAM} (radial) and \cite{M-Duke} (general case). We note that all minimal mass blow-up solutions are unstable. In this paper we are interested in stable blow-up dynamics, thus, we consider the initial data with the mass above the mass of the ground state $Q$.

Investigations of the stable blow-up dynamics for the NLS equation (mainly in the two dimensional case, i.e., cubic nonlinearity) go back to 1970's (for example, \cite{BZS1975}, \cite{ZS1976}, \cite{SZ}, \cite{SZ2}).
In 1986, McLaughlin, Papanicolaou, Sulem and Sulem in \cite{MPSS1986} (see also  \cite{SS1999}) showed that the rate of stable blow-up should be coming from scaling and equivalent to $(T-t)^{-\frac{1}{2}}$ by applying the dynamic rescaling method. Previously, Talanov and collaborators (1978), Wood (1984) and Rypdal and Rasmussen (1986) suggested a form of $(|\ln(T-t)|/(T-t))^{\frac{1}{2}}$ from a different approach, see \cite{VPT1978}, \cite{W1984}, \cite{RR1986}. Then, Landman, Papanicolaou, Sulem, and Sulem in \cite{LPSS1988} (also  LeMesurier, Papanicolaou, Sulem and Sulem in \cite{LePSS1988}) as well as Fraiman \cite{F85},  and others (see \cite{LLPSS1989}, \cite{DNPZ1992}, \cite{KSZ1991}, \cite{M1993}; for a more complete reference list on this subject refer to \cite{SS1999})
%Malkin \cite{M90}, Dyachenko, Newell, Pushkareva, and Zakharov \cite{DNPZ1992}, Kosmatov, Schvets and Zakharov \cite{KSZ91},
concluded from asymptotic analysis, incorporating numerical simulations, that the rate of blow-up is of the form $\ds \left( \frac{\ln|\ln(T-t)|}{(T-t)} \right)^{\frac{1}{2}}$, which is now referred to as the ``log-log" law. Later in 1990, Landman, Papanicolaou, Sulem, Sulem and Wang in \cite{LPSSW1990} concluded that this log-log rate is a stable blow-up rate in 2d by simulating the blow-up dynamics without radial symmetry assumption. Note that numerical tracking of the log-log correction is extremely difficult as it is reached only at exceedingly huge focusing levels (of orders $\sim$ 200), for example, see \cite{FP1999}, and thus, asymptotic analysis was used in \cite{LPSS1988, LePSS1988} to obtain the log-log correction. An alternative approach was developed by Akrivis, Dougalis, Karakashian and McKinney in 1993 and 1997, see \cite{ADKM1993}, \cite{ADKM2003}, where very accurate computations of the solution were performed using an adaptive Galerkin finite element method. The computed solution was then tested against several theoretical predictions for the functional form of the correction, and the log-log law was selected for the far asymptotic regime as the best fitting. Recently, there have been (very refined and to exorbitantly high focusing levels) numerical studies beyond the leading order of the stable blow-up dynamics, see Lushnikov, Dyachenko and Vladimirova \cite{LDV2013}, where the authors obtained higher order correction terms in the log-log blow-up rate.

While it was important to investigate the stable blow-up dynamics numerically and heuristically, the rigorous analytical description started appearing only at the turn of this century. Galina Perelman in 2001, see \cite{GP2001} (also \cite{GP1999}), made a rigorous construction of the ``log-log" blow-up solutions in the 1d case (i.e., for the quintic NLS equation $iu_t +u_{xx}+|u|^4u=0$). As it was the first analytical proof of the ``log-log" blow-up, it opened possibilities to study blow-up solutions rigorously. In the series of papers \cite{MR2005}, \cite{MR2003}, \cite{MR2004},  \cite{R2005}, \cite{MR2006}, %\cite{FMR2006},
Merle and Rapha\"{e}l did a systematic study of the generic blow-up, considering initial data slightly above the ground state mass and with negative energy, and obtained a detailed description of the dynamics, in particular, the convergence of the blow-up profiles to the self-similarly rescaled $Q$ and the blow-up rate $\ds \left( \frac{\ln|\ln(T-t)|}{(T-t)} \right)^{\frac{1}{2}}$. This description was proved for any dimension, provided the so-called ``spectral property" holds true (see the actual statement below). This spectral property was proved rigorously in the one dimensional case in \cite{MR2005}, since the ground state $Q$ is explicit in 1d  \eqref{E:Q-1d} and the spectral property could be deduced from some known properties of the second-order differential operators \cite{T}. For the dimensions $d=2, 3, 4$, Fibich, Merle and Rapha\"{e}l gave a numerically-assisted proof of the spectral property, using the numerical representation of $Q$, see \cite{FMR2006} and \cite{FMR2008}. For 
%higher dimensions,
$d\geq 6$, 
the authors in \cite{FMR2006} indicated that the spectral property is indecisive (via their approach), furthermore, there was no direct numerical simulation of the blow-up solutions in higher dimensions, even under the radial symmetry assumption (all prior $L^2$-critical NLS numerical simulations were in 2d), as it could have been a computationally difficult task at that time. Investigating stable blow-up regime in higher dimensions is the goal of this work.

In this paper, we first show that in higher dimensions, $4 \leq d \leq 12$, a generic self-similar blow-up dynamics is also described by the ``log-log" law, i.e., our numerical simulations show that the blow-up profiles converge to the self-similar ground state, furthermore, we obtain the log-log blow-up rate via derivation as in \cite[Section 8]{SS1999} and matching refinement adapted from \cite{ADKM2003}. As we only numerically simulate the radial case, we conclude in this first part that this log-log blow-up dynamics is radially stable. 
Secondly, we give a proof of the spectral property for $d=5, ..., 12$, which completes the rigorous proof of the stable blow-up dynamics for the dimensions $d \leq 12$ for the initial data with negative energy and mass slightly above the mass of the ground state.

For consistency, in what follows, we adopt the notation from \cite{FMR2006} and \cite{SS1999}: denote by $B_{\alpha}$ a neighborhood of $H^1$ functions with the $L^2$ norm slightly above the mass of the ground state $Q$, i.e., $B_{\alpha} = \left\{f \in H^1 (\mathbb R^d)\, : ~ \int Q^2 < \int |f|^2 < \int Q^2 + \alpha \right\}$; the variable $x=(x_i)_{1\leq i\leq d}$ is used as the space variable, $r=|x|$ stands for the radial coordinate and the radial derivative $\partial_r f(r)$ we typically write as $f_r$; we also denote $H^1_r$ as the space of radial functions in $H^1$, and $\langle \; , \; \rangle$ as the standard inner product on $L^2$.
We define the scaling symmetry generator acting on $Q$ by
\begin{equation}\label{E:LambdaQ}
Q_1 =\frac{d}{2}\, Q+x\cdot \nabla Q, \quad \text{and also define} \quad
Q_2 =\frac{d}{2}\, Q_1 + x\cdot \nabla Q_1.
\end{equation}

\begin{property}[\cite{FMR2006}, \cite{MS2011}] \label{D:SP}
Let $d\geq 1$. Consider the two real Schr\"{o}dinger operators
\begin{align}
L_1=-\Delta + V_1 \quad \mathrm{and} \quad L_2=-\Delta + V_2,
\end{align}
where
\begin{align}
& V_1(r)=\frac{2}{d}\left( \frac{4}{d}+1 \right)Q^{\frac{4}{d}-1}rQ_r,\\
& V_2(r)=\frac{2}{d}Q^{\frac{4}{d}-1}rQ_r,
\end{align}
and the real-valued quadratic form for $u=f+ig \in H^1(\mathbb{R}^d)$ is defined as
\begin{align}
B(u,u)&=B_1(f,f)+B_2(g,g)\\
      &=\langle L_1f,f \rangle +\langle L_2g,g \rangle.
\end{align}
Then there exists a universal constant $\delta_0>0$ such that for any $u \in H^1(\mathbb{R}^d)$, if the following orthogonal conditions hold
\begin{align}\label{orthogonal conditions1}
 \langle f, Q \rangle=\langle f, Q_1 \rangle=\langle f, x_i \, Q \rangle_{1\leq i\leq d}=0,
\end{align}
\begin{align}\label{orthogonal conditions2}
\langle Q_1, g \rangle=\langle Q_2, g \rangle=\langle \partial_{x_i}Q,g \rangle_{1\leq i\leq d}=0,
\end{align}
then $B(u,u)>0$, or more precisely,
\begin{align}\label{E:Bpositive}
B(u,u)\geq \delta_0 \left( \int|\nabla u|^2 +|u|^2e^{-|x|}   \right).
\end{align}
\end{property}

We note that the commutator-type formulas hold for the operators $L_1$ and $L_2$, which was observed in \cite{MR2005},
$$
L_1f= \frac{1}{2}\left[ L_+(f_1) - (L_+f)_1 \right] \quad \text{and} \quad
L_2g=\frac{1}{2}\left[ L_-(g_1) - (L_-g)_1 \right],
$$
where $L_+$ and $L_-$ are the standard linearized operators obtained from the linearization of solutions around the ground state $Q$:
\begin{align}
& L_+=-\Delta +1 -\left( 1+\frac{4}{d} \right)Q^{\frac{4}{d}},\\
& L_-=-\Delta +1 -Q^{\frac{4}{d}}.
\end{align}
%%%%%%%%%%%%%%%%%%%%%%%%%%%%55Sometimes, we use the same notation from \cite{HRP2010}, $Q_1=\Lambda Q$ and $Q_2=\Lambda Q_1$.
We mention that the choice of the orthogonal conditions (\ref{orthogonal conditions1}) and (\ref{orthogonal conditions2}) in the above Spectral Property 1 comes from the generalized null space of the matrix operator $ \mathcal{L}= \left[ \begin{matrix}
0 \,&L_- \\
-L_+ \, &0
\end{matrix}\right]$ (see, e.g., Weinstein \cite{W1986}, Buslaev-Perelman \cite{BP}).
\smallskip

Our first result is the following:
\begin{theorem}\label{T:SP}
The Spectral Property 1 holds true in dimensions from $d=5$ to $d=10$.
The Spectral Property 1 holds true in the radial case for dimensions $d=11$ and $d=12$.
\end{theorem}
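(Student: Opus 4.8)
The plan is to prove the two coercivity estimates separately, exploiting the fact that both the quadratic form and the target norm split across $f$ and $g$. Since $B(u,u)=\langle L_1 f,f\rangle+\langle L_2 g,g\rangle$ and $\int(|\nabla u|^2+|u|^2e^{-|x|})=\int(|\nabla f|^2+f^2e^{-|x|})+\int(|\nabla g|^2+g^2e^{-|x|})$, and since the conditions (\ref{orthogonal conditions1}) constrain only $f$ while (\ref{orthogonal conditions2}) constrain only $g$, it suffices to prove $\langle L_1 f,f\rangle\geq\delta_0\int(|\nabla f|^2+f^2e^{-|x|})$ on the subspace cut out by (\ref{orthogonal conditions1}), and the analogous bound for $L_2$ on (\ref{orthogonal conditions2}). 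Because $Q$ is radial, the potentials $V_1,V_2$ are radial, so each operator preserves the spherical-harmonic sectors: writing $f=\sum_{\ell,m}f_{\ell,m}(r)Y_{\ell,m}$, both forms become block-diagonal in the angular momentum $\ell$, with $L_i$ acting on the $\ell$-block as $-\partial_r^2-\frac{d-1}{r}\partial_r+\frac{\ell(\ell+d-2)}{r^2}+V_i(r)$. The crucial structural point is that the constraint functions occupy only the two lowest sectors: $Q,Q_1,Q_2$ are radial ($\ell=0$), while $x_iQ$ and $\partial_{x_i}Q$ are pure $\ell=1$ modes.

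Next I would reduce the weighted coercivity (\ref{E:Bpositive}) to plain strict positivity on the constrained space. Since $Q$ decays exponentially, one checks that $V_1,V_2\in L^{d/2}(\mathbb R^d)$ for $d\geq 3$, so by Sobolev embedding both $\int V_i f^2$ and $\int e^{-|x|}f^2$ are dominated by $\|\nabla f\|_{L^2}^2$ and define compact perturbations of $-\Delta$ on $\dot H^1$. Consequently the essential spectrum of $L_i$ is $[0,\infty)$, the Rayleigh quotient $\langle L_i f,f\rangle/\|\nabla f\|_{L^2}^2$ tends to $1$ along any sequence escaping to spatial infinity, and a standard Weyl/concentration-compactness argument upgrades strict positivity of $\langle L_i\cdot,\cdot\rangle$ on the finite-codimension constrained subspace to a uniform bound $\langle L_i f,f\rangle\geq c\,\|\nabla f\|_{L^2}^2$; the $e^{-|x|}$-weighted term on the right of (\ref{E:Bpositive}) is then absorbed for free. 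Thus the theorem reduces to verifying the sign of the bottom of the constrained spectrum in each angular sector.

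For the high sectors this is structural: the centrifugal term $\frac{\ell(\ell+d-2)}{r^2}$ is monotone increasing in $\ell$, so positivity is monotone in $\ell$ and it suffices to control the lowest sectors, the sectors $\ell\geq 2$ being positive and free of any orthogonality constraint. This leaves the finitely many modes $\ell=0$ and $\ell=1$, where the problem becomes a genuinely one-dimensional constrained radial eigenvalue problem on the half-line. Here I would compute $Q$ (and hence $V_1,V_2,Q_1,Q_2$) to high accuracy, discretize the radial operators, and use the fact that constrained positivity of a self-adjoint operator with a known, finite number of non-positive directions is equivalent to a finite matrix condition — a Grillakis--Shatah--Strauss / Sylvester-type criterion testing the negative and zero eigendirections against the constraint functions $Q,Q_1,x_iQ$, respectively $Q_1,Q_2,\partial_{x_i}Q$. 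One then checks numerically that this criterion holds with a strictly positive margin in each of the sectors $\ell=0,1$.

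The main obstacle is converting these computations into a rigorous proof. This demands explicit control of (i) the error in the numerically produced ground state $Q$, (ii) the discretization error of the radial eigenvalue problems, (iii) the truncation of the half-line to a finite interval, handled via the exponential decay of $Q$ and hence of $V_i$, and (iv) certified enclosures of the relevant eigenvalues and inner products, for instance through interval arithmetic, so that the computed positive margin provably exceeds the accumulated error. This last item also accounts for the dichotomy in the statement: the radial ($\ell=0$) sector retains a robust, certifiable positive margin for every $d\leq 12$, whereas in the non-radial ($\ell=1$) sector the certified lower bound deteriorates as $d$ increases and, for $d=11$ and $d=12$, falls within the numerical error bars, so that only the radial form of the Spectral Property can be rigorously validated in those two dimensions.
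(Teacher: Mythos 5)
Your overall architecture is the same as the paper's: split $B$ into $B_1(f,f)+B_2(g,g)$, decompose into spherical harmonics, note that the constraint functions $Q,Q_1,x_iQ$ and $Q_1,Q_2,\partial_{x_i}Q$ live only in the sectors $\ell=0,1$, count negative directions sector by sector, exhibit them by solving $L_i u = (\text{constraint})$ and testing the Gram matrix of the bilinear form (the paper's $K_{ij}$, $KK$, $J_{ij}$, $JJ$), and recover the weighted coercivity \eqref{E:Bpositive} by a perturbation/stability argument (the paper perturbs $L_{1,2}$ by $-\delta_0 e^{-|x|}$ and shows the index is unchanged). Up to that point your proposal tracks the paper's proof of Proposition \ref{P:B-space} and Theorem \ref{T:nonradial1}.

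However, there is a genuine gap, and it sits exactly where the dimensional restriction in the statement comes from: your claim that the sectors $\ell\geq 2$ are ``structural[ly] \ldots positive and free of any orthogonality constraint.'' Monotonicity of the centrifugal term $\frac{\ell(\ell+d-2)}{r^2}$ only gives that the index $\mathrm{ind}(B_{1,2}^{(\ell)})$ is non-increasing in $\ell$ (the paper's Proposition 3.7); it does \emph{not} tell you at which $\ell$ the index reaches zero. That must be verified, and it fails for $L_1$ in high dimensions: the paper computes $\mathrm{ind}(L_1^{(2)})=0$ only for $d\leq 10$, while for $d=11,12$ one has $\mathrm{ind}(L_1^{(2)})=1$. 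Since no constraint function has an $\ell=2$ component, there is no orthogonality condition available to absorb that negative direction (and indeed the natural test value $K_{11}^{(2)}=B_1^{(2)}(U_1^{(2)},U_1^{(2)})$ computed from $L_1^{(2)}U_1^{(2)}=rQ$ comes out \emph{positive}, so the method is indecisive there). Consequently, your proof as written would establish the non-radial Spectral Property 1 in $d=11,12$, which is precisely what cannot be done --- if the computed index is correct, an $\ell=2$ negative eigenfunction of $L_1^{(2)}$ satisfies all the orthogonality conditions \eqref{orthogonal conditions1} yet makes $B_1$ negative. Relatedly, your closing explanation of the dichotomy (certified margins in the $\ell=1$ sector falling within error bars for $d=11,12$) misidentifies the mechanism: the restriction to the radial case in $d=11,12$ is structural (an index jump in the $\ell=2$ sector of $L_1$, and also $\mathrm{ind}(L_2^{(1)})=1$ appearing in those dimensions, though that one is still handled because $J_{11}^{(1)}<0$), not an artifact of accumulated numerical error. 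To repair the proposal you must replace the ``structural'' claim by a sector-by-sector index computation for $\ell=0,1,2,\ldots$, using monotonicity only to terminate the search once a zero index is found.
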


We next modify the statement of the Spectral Property 1. 
\begin{property}\label{D:SP2}
Let $d \geq 2$ and assume the same set up as in the Spectral Property 1, except
replace the orthogonal conditions \eqref{orthogonal conditions2} by 
\begin{align}\label{E:ortho-Q}
%\langle Q, g \rangle = 0 \quad \text{(in the radial case),}\\
\langle Q, g \rangle = \langle \partial_{x_i}Q,g \rangle_{1\leq i\leq d}=0. 
%\quad %\text{(in the non-radial case).}
\end{align}
Then $B(u,u)>0$, or \eqref{E:Bpositive} is true (i.e., the same conclusion as in the Spectral Property 1 holds).
\end{property}

Our second result is about the modified Spectral Property.
\begin{theorem}\label{T:SP2}
The Spectral Property 2 holds true in dimensions from $d=2$ to $d=10$.
The Spectral Property 2 holds true in the radial case for dimensions $d=11$ and $d=12$.
\end{theorem}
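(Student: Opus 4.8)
The plan is to exploit the fact that the quadratic form splits as $B(u,u)=B_1(f,f)+B_2(g,g)$ with the constraints on $f$ and on $g$ completely decoupled, so the two summands may be treated independently. Setting $g=0$, the conditions \eqref{orthogonal conditions2} (resp. \eqref{E:ortho-Q}) hold trivially, so the Spectral Property~1 yields $B_1(f,f)=B(f,f)\geq \delta_0\int(|\nabla f|^2+f^2e^{-|x|})$ under \eqref{orthogonal conditions1}; this $f$-part is identical in both properties and is exactly what is established in the proof of Theorem~\ref{T:SP} for $d=5,\dots,10$ (radially for $d=11,12$) and by Fibich--Merle--Rapha\"el \cite{FMR2006} for $d=2,3,4$. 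Consequently the entire new content of Theorem~\ref{T:SP2} reduces to proving coercivity of $B_2(g,g)=\langle L_2 g,g\rangle$ under the modified conditions \eqref{E:ortho-Q}, namely $\langle Q,g\rangle=\langle \partial_{x_i}Q,g\rangle=0$.

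First I would reduce the $g$-problem to a family of one-dimensional problems by decomposing $g$ into spherical harmonics. Since $L_2=-\Delta+V_2$ commutes with rotations and $V_2$ is radial, it acts diagonally across angular sectors, the sector of degree $\ell$ being governed by the radial operator $L_2^{(\ell)}=-\partial_{rr}-\tfrac{d-1}{r}\partial_r+\tfrac{\ell(\ell+d-2)}{r^2}+V_2$ on $L^2(r^{d-1}\,dr)$. Because $V_2$ is a fixed, bounded, exponentially decaying potential, the centrifugal term forces $L_2^{(\ell)}\geq 0$ for all $\ell$ above some finite $\ell_0$, so positivity can fail only in the finitely many low sectors; any such sectors with $\ell\geq 2$ are checked to be unconditionally positive. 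The conditions \eqref{E:ortho-Q} are tailored to the two remaining sectors: $\langle\partial_{x_i}Q,g\rangle=0$ controls $\ell=1$ (the $\partial_{x_i}Q$ are $\ell=1$ modes with radial profile $Q_r$), while $\langle Q,g\rangle=0$ controls the radial sector $\ell=0$.

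For each low sector I would establish strict positivity on the constrained subspace via the classical one--negative--direction criterion. Concretely, I would verify (numerically, with rigorous error control on the computed $Q$ and on discretization/truncation) that $L_2^{(0)}$ has exactly one negative eigenvalue and trivial kernel, and likewise for $L_2^{(1)}$; then, writing $\phi$ for the radial profile of the relevant constraint ($\phi=Q$ for $\ell=0$, $\phi=Q_r$ for $\ell=1$), the form is coercive on $\{g:\langle\phi,g\rangle=0\}$ if and only if $\langle (L_2^{(\ell)})^{-1}\phi,\phi\rangle<0$. The heart of the matter is thus certifying the single scalar inequality $\langle (L_2^{(0)})^{-1}Q,Q\rangle<0$ and its $\ell=1$ analogue in every dimension $2\leq d\leq 12$. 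I stress that SP2 is genuinely \emph{stronger} in the radial sector than SP1: SP1 removes the two directions $Q_1,Q_2$, whereas SP2 removes only the single direction $Q$, so Theorem~\ref{T:SP2} does not follow formally from Theorem~\ref{T:SP} and requires this extra verification.

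The main obstacle, as \cite{FMR2006} makes clear, is rendering these spectral computations \emph{rigorous} in high dimensions: it is precisely the sign of quantities like $\langle (L_2^{(0)})^{-1}Q,Q\rangle$ that their approach found indecisive for $d\geq 6$. Overcoming this calls for a scheme computing $Q$ and the eigenvalues/resolvent of $L_2^{(\ell)}$ with certified error bounds sharp enough to pin down strict inequalities uniformly for $2\leq d\leq12$ (and, for $d=11,12$, only within the radial class, which accounts for the restriction there). The final, more routine step is to upgrade strict positivity of $B_2$ on the constraint subspace to the weighted coercive bound \eqref{E:Bpositive}: following \cite{FMR2006}, one shows the form dominates $\int|\nabla u|^2+|u|^2e^{-|x|}$ modulo a relatively compact lower-order term and then uses discreteness of the bottom of the spectrum together with the established strict positivity to absorb it.
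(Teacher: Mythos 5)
Your proposal follows essentially the same route as the paper: the $B_1$ part is inherited unchanged from Spectral Property 1, the $g$-part is decomposed into spherical harmonics with index counting in each sector, and the radial sector is settled by verifying $\mathrm{ind}(L_2^{(0)})=1$ together with the single scalar sign condition $\langle (L_2^{(0)})^{-1}Q,Q\rangle<0$ --- exactly the quantity $\langle L_2 Z,Z\rangle$ with $L_2Z=Q$ computed in Table \ref{T:L2Q} --- followed by the stability/perturbation argument to upgrade strict positivity to the weighted bound \eqref{E:Bpositive}. Two minor factual corrections that do not affect the method: the paper finds $\mathrm{ind}(L_2^{(1)})=0$ for $d\leq 10$ (so the $\ell=1$ sector is unconditionally coercive there, and your one-negative-eigenvalue criterion is needed only for $d=11,12$, where indeed $J_{11}^{(1)}<0$, Table \ref{J1}), and the radial-only restriction in $d=11,12$ comes from the $B_1$ side ($\mathrm{ind}(L_1^{(2)})=1$ with $K_{11}^{(2)}>0$, Table \ref{K2}, leaving $B_1^{(2)}$ indecisive), not from any failure of the $L_2$ computations.
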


The Spectral Property 1 added to the work of Merle-Rapha{\"e}l \cite{MR2005}, \cite{MR2003}, \cite{MR2004},  \cite{MR2006}, and Fibich-Merle-Rapha{\"e}l \cite{FMR2006},  
now fully completes the proof of the stable blow-up dynamics in dimensions $d \leq 10$, and radially stable in $d \leq 12$ with the description of various dynamical features (profile, rate, phase, control of the remainder, etc; we note that the bounded control in the external region is given in \cite{HR2012}). For completeness of this introduction, we provide a concise statement of the stable blow-up dynamics  below. %; note that it
%and make further remark including our numerical observations in part (iv). We note that the following statement
%is a compilation of the results discussed above.
%including  \cite{LPSS1988},  \cite{LePSS1988}, \cite{GP2001}, \cite{MR2005}, \cite{MR2003}, \cite{MR2004},  \cite{MR2006}, \cite{FMR2006}.

\begin{theorem}[Stable blow-up dynamics of (\ref{NLS1})]\label{T:Main}
Assume the spectral property holds true, which is now proved for $d \leq 10$ and in the radial case for $d = 11, 12$.

There exist universal constants $\alpha>0$ and $C>0$ such that the following holds true. For $u_0 \in B_{\alpha}$, let $u(t)$ be the corresponding $H^1$ solution to (\ref{NLS1}) on $[0,T)$, the interval of the maximal in forward time existence of $u$.
\begin{itemize}
\item[(i)]
Description of the singularity: Assume that $u(t)$ blows up in finite time, i.e., $0<T<\infty$. Then there exist parameters $(\lambda(t),x(t),\gamma(t)) \in  \mathbb{R} \times \mathbb{R}^d \times \mathbb{R}$ and asymptotic profile $u^* \in L^2$ such that
$$
u(t)-\frac{1}{\lambda(t)^{\frac{d}{2}}} Q\left(\frac{x-x(t)}{\lambda(t)} \right)e^{i \gamma(t)} \rightarrow u^* \quad in \, L^2(\mathbb R^d) \quad as \: ~ t \rightarrow T.
$$
Moreover, the blow-up point is finite in the sense that
$$
x(t) \rightarrow x(T) \in \mathbb{R}^d \quad as \: ~ t\rightarrow T.
$$

\item[(ii)]
Estimates on the blow-up speed: for $t$ close enough to T, either
\begin{align}\label{loglog}
\lim_{t\rightarrow T} \frac{|\nabla u(t)|_{L^2}}{|\nabla Q|_{L^2}}\left( \frac{T-t}{\ln |\ln (T-t)|}\right)^{\frac{1}{2}}=\frac{1}{\sqrt{2\pi}},
\end{align}
or
$$
|\nabla u(t)|_{L^2} \geq \frac{C(u_0)}{T-t}.
$$
The equation (\ref{loglog}) is referred to as the ``log-log" blow-up rate.

\item[(iii)]
Sufficient condition for ``log-log" blow-up\footnote{A more general description is $\ds E(u_0)< \frac12 \, \frac{[P(u_0)]^2}{M(u_0)}$, see \cite{MR2005}, \cite{MR2003}.}: If $E(u_0) < 0$ and $\int Q^2 < \int |u_0|^2 < \int Q^2 +\alpha$, then $u(t)$ blows up in finite time with the ``log-log" rate (\ref{loglog}). More generally, the set of initial data $u_0 \in B_{\alpha}$ such that the corresponding solution $u(t)$ to (\ref{NLS1}) blows up in finite time $0<t< \infty$ with the ``log-log" speed (\ref{loglog}) is open in $H^1$.
\end{itemize}
\end{theorem}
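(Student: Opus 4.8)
The plan is to run the Merle--Rapha\"el modulation machinery, for which the Spectral Property is precisely the coercivity input; since Theorems~\ref{T:SP} and~\ref{T:SP2} supply that hypothesis in the stated dimensions, the task reduces to assembling the dynamical argument conditional on it. First I would exploit the variational characterization of $Q$ together with conservation of mass and energy: for $u_0 \in B_\alpha$ the smallness of $\int |u_0|^2 - \int Q^2$ forces the solution, once $\|\nabla u(t)\|_{L^2}$ is large, to remain inside a tube around the soliton manifold $\{\lambda^{-d/2}Q((\cdot-x_0)/\lambda)e^{i\gamma}\}$. On that tube, modulation theory yields a geometric decomposition
$$
u(t,x)=\frac{1}{\lambda(t)^{d/2}}\big(Q_b+\varepsilon\big)\!\left(s,\frac{x-x(t)}{\lambda(t)}\right)e^{i\gamma(t)},
$$
where $s$ is the rescaled time with $ds/dt=\lambda^{-2}$, the parameter $b$ is an extra modulation parameter tracking the self-similar rate $-\lambda_s/\lambda$, $Q_b$ is the Merle--Rapha\"el approximate profile refining $Q$, and the remainder $\varepsilon=\varepsilon_1+i\varepsilon_2$ is fixed by orthogonality conditions chosen to match \eqref{orthogonal conditions1}--\eqref{orthogonal conditions2} (equivalently \eqref{E:ortho-Q}), i.e.\ to project out the generalized null space of $\mathcal{L}$.

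Next I would derive the modulation equations by differentiating the orthogonality conditions and inserting the decomposition into \eqref{NLS1}, obtaining an ODE system that controls $(\lambda_s/\lambda+b,\, b_s,\, x_s,\, \gamma_s)$ in terms of $\|\varepsilon\|$ and higher-order corrections in $b$. The heart of the matter is the energy estimate: after the commutator reduction recorded above, the linearized Hamiltonian acting on $\varepsilon$ is, to leading order, the quadratic form $B(\varepsilon,\varepsilon)=\langle L_1\varepsilon_1,\varepsilon_1\rangle+\langle L_2\varepsilon_2,\varepsilon_2\rangle$, and the Spectral Property delivers the strict lower bound \eqref{E:Bpositive}. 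This coercivity turns an a~priori sign-indefinite form into a positive one on the admissible subspace; it is the single genuinely dimension-dependent ingredient, and the only piece missing in high dimensions, which this paper resolves.

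With coercivity in hand I would assemble the mixed energy--virial Lyapunov functional of Merle--Rapha\"el, combining the conserved energy, a localized virial quantity, and a weighted correction, and show it is almost monotone along the flow by using the local virial identity together with the $B$-coercivity to absorb the quadratic remainder. Integrating the resulting differential inequality pins down the asymptotic law for $b$, whose evolution is driven by the characteristic exponentially small term $\sim e^{-\pi/b}$ responsible for the log-log rather than pure self-similar correction; undoing the rescaling through $ds/dt=\lambda^{-2}$ then produces \eqref{loglog}, while the external-region control of \cite{HR2012} and the almost-monotonicity of the modulation parameters give the $L^2$ convergence and the finiteness of the blow-up point in~(i). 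Part~(iii) follows from a rigidity dichotomy: the sign condition $E(u_0)<0$ (more generally the footnoted condition) excludes the fast $(T-t)^{-1}$ branch of~(ii) and forces the log-log regime, with openness inherited from the stability of the bootstrap estimates under $H^1$ perturbations. The main obstacle throughout is exactly the coercivity \eqref{E:Bpositive}; everything downstream is the by-now-standard Merle--Rapha\"el analysis, which is why proving the Spectral Property in dimensions $d=5,\dots,12$ is what genuinely closes the argument.
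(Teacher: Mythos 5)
Your proposal is correct and takes essentially the same route as the paper: the paper does not reprove the dynamical machinery but, exactly as you conclude, treats Theorem \ref{T:Main} as the output of the Merle--Rapha\"{e}l analysis (\cite{MR2005}, \cite{MR2003}, \cite{MR2004}, \cite{MR2006}, with the external-region control from \cite{HR2012}), whose only missing hypothesis in high dimensions is the coercivity \eqref{E:Bpositive}, supplied here by Theorems \ref{T:SP} and \ref{T:SP2}. Your sketch of the modulation, local virial, and Lyapunov-functional steps is a faithful summary of that cited machinery (with the minor caveat that the form $B$ arises from the virial/dilation commutator rather than the linearized Hamiltonian itself), whereas the paper records only the reduction, since its genuine contribution is the numerically-assisted proof of the Spectral Property.
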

\begin{remark}
Our numerical simulations show that for the initial data $u_0$ of the Gaussian-type, $u_0 \sim  Ae^{-r^2}$, or of the ground state-type, $u_0 \sim A\, e^{-r}$, the parameter $\alpha$ in Theorem \ref{T:Main}(iii) can be taken very large, i.e., $\alpha=\infty$.
\end{remark}

This paper consists of two parts. In the first part, Section \ref{S:2}, we show the results from the direct numerical simulations of solutions in the $L^2$-critical NLS equation in dimensions $4 \leq d \leq 12$ by the dynamic rescaling method for the radially symmetric data. This shows that the blow-up rate is $(T-t)^{-\frac{1}{2}}$, possibly with some correction terms. Applying the arguments from \cite{LPSS1988} and \cite{LePSS1988}, we show that the correction term indeed exists, which can be obtained via asymptotic analysis $q(t) \approx \left((2\pi)/\ln \ln \frac{1}{|T-t|}\right)^{\frac{1}{2}}$, and to further confirm it, we apply the approach from \cite{ADKM2003} to fit the solution with various functional forms.
%, which comes from the analysis of $a(\tau)$, see \eqref{E:a}.
This leads us to the conclusion that at least for the radial data in the dimensions $d=4, ..., 12$, the ``log-log" blow-up dynamics is generic and stable, see Section \ref{S:Numerics} for details.
In the second part of the paper, Section \ref{S:SP}, we revisit the Spectral Property~1 and give a numerically-assisted proof of it for dimensions up to $d=10$ (general case), and for the radially symmetric case for $d=11$ and $d=12$; we then also establish the Spectral Property~2. In Appendices, we provide the rest of the numerical simulations in dimensions $d = 6,..., 12$ (the simulations for $d=4, 5$ are in Section \ref{S:Numerics}); we discuss the artificial boundary conditions; we describe a new approach to compute the potentials $V_{1,2}$ in high dimensions; and also provide comparison with previous results from \cite{FMR2006} of the Spectral Property 1 for $d=2,3,4$.
\smallskip

\textbf{Acknowledgements}: The authors would like to thank Gideon Simpson for his visit to GWU and posing some of the questions addressed in this paper as well as for helpful discussion on this topic. The authors are grateful for the fruitful discussions, the initial guidance for numerics and comments on the initial draft of the paper to Pavel Lushnikov, who also visited GWU in April 2017, and whose trip was partially supported by GWIMS. S.R. would like to thank Pierre Rapha\"{e}l for bringing to her attention \cite{FMR2008} and for posing the question about the orthogonality condition \eqref{E:ortho-Q}. K.Y. and Y.Z. would like to acknowledge Yongyong Cai, who hosted their visit to the Computational Science Research Center (CSRC) in Beijing during Summer 2017. S.R. was partially supported by the NSF CAREER grant DMS-1151618 as well as part of the K.Y.'s graduate research fellowship to work on this project came from the above grant. Y.Z. was partially supported by the Simons Foundation through Grant No. 357963.

\section{Numerical simulations of the solutions} \label{S:2}

In this section, we first review the dynamic rescaling method which is applied in our numerical simulations. Then, we present our numerical results. These numerical findings, combined with the analysis in \cite{LPSS1988} and \cite{LePSS1988}, adapted to our setting, shows the ``log-log" blow-up rate.

\subsection{Dynamic rescaling method}\label{S:DRMethod} %\texttt{\\}

We use the dynamic rescaling method (see \cite{MPSS1986} and \cite{SS1999}) for simulating the blow-up solutions of the equation (\ref{NLS1}). The key idea of this method is to appropriately rescale the equation (\ref{NLS1}) in both time and space variables. Then solutions for the rescaled equation exist globally in time. {We restrict our numerical simulations under the radially symmetric assumption}. Letting $\sigma=\frac{2}{d}$, where $d$ is the dimension, we set
\begin{align}\label{rescaled initial}
u(r,t)=\frac{1}{L(t)^{\frac{1}{\sigma}}}v(\xi, \tau), \quad \mbox{where} \quad \xi=\frac{r}{L(t)}, \quad \tau=\int_0^t\frac{ds}{L^2(s)}.
\end{align}
{Given a prescribed value $L(0)$ (e.g., $L(0)=1$), we obtain the initial condition $v_0(\xi)={L(0)^{\frac{1}{\sigma}}}u_0(r)$.}
Then the equation (\ref{NLS1}) becomes
\begin{align}\label{DRNLS}
iv_{\tau}+ia(\tau)\left(\xi v_{\xi}+\frac{v}{\sigma}\right)+\Delta_{\xi} v + |v|^{2\sigma}v=0,
\end{align}
{with boundary conditions 
\begin{align}\label{DRNLS_BC}
v_{\xi}(0)=0, \qquad v(\infty)=0,
\end{align}}
where
\begin{align}\label{E:a}
a=-L\frac{dL}{dt}=-\frac{d \ln L}{d\tau}.
\end{align}
There are various choices for tracking the parameter $L(t)$. The first choice is the following: since we want to bound  $\|\nabla u(t)\|_{L^2}$ as $t\rightarrow T$, we choose the parameter $L(t)$ such that the value of $\|\nabla_{\xi} v(\tau)\|_{L^2}$ in the rescaled equation (\ref{DRNLS}) remains constant in time, i.e.,
\begin{align}
L(t)=\left(\frac{\|\nabla_{\xi} v_0 \|_{L^2}}{\|\nabla u(t)\|_{L^2}} \right)^{2/p}, \qquad p=2+\frac{2}{\sigma}-d.
\end{align}
Then, $a(\tau)$ is, consequently,
\begin{align}
a(\tau)= -\frac{2}{p \|v_0 \|_{L^2}^2} \int_0^{\infty} |v|^{2\sigma} \mathrm{Im}\left(\bar{v} \Delta v \right) \xi^{d-1} d \xi.
\end{align}

An alternative choice for the $L(t)$ is to fix the $L^{\infty}$ norm of $v$ such that $\|v\|_{L^\infty}$ remains constant (see \cite{LePSS1988}, \cite{KSZ1991} for such a choice), thus, set
\begin{align}
L(t)=\left(\frac{v_0(0)}{\|u(t)\|_{L^\infty}}\right)^{\sigma},
\end{align}
and hence, $a(\tau)$ rewrites as
\begin{align}
a(\tau)= -\frac{\sigma}{|v_0(0)|^2}\Im\left( \bar{v} \Delta_{\xi} v \right)\vert_{ \left(0,\tau \right)}.
\end{align}
The above two choices are typical rescalings, for example, see \cite{MPSS1986} and \cite{SS1999}.

To discuss our numerical scheme, we note that the equation (\ref{DRNLS}) is of the form
\begin{align} \label{DRNLSF}
iv_{\tau} +\Delta_{\xi} v +\mathcal{N}(v)=0, \quad \tau \in [0, \infty), \quad \xi \in [0, \infty),
\end{align}
{where 
\begin{align*}
\mathcal{N}(v)=ia(\tau)\left(\xi v_{\xi}+\frac{v}{\sigma}\right)+|v|^{2\sigma}v.
\end{align*}}

%\begin{align}\label{direvative xi}
%\xi_z=2\kappa /(1-z)^2, \quad \texttt{and} \quad
%\xi_{zz}=4\kappa/(1-z)^3.
%\end{align}
%Let $\mathbf{z}=(z_0,z_1,\cdots,z_N)^T$} to be discretized by $N+1$ Chebyshev Gauss-Lobatto collocation points (for instance see \cite{STT2015} and \cite{T2001}), and $\mathbf{\xi}=(\xi_0,\xi_1,\cdots,\xi_N)^T$, where $\xi_i = \kappa \, \frac{1+z_i}{1-z_i}$ }

Before we discretize the space variable $\xi$, we need to map the spatial domain $[0, \infty)$  onto $[-1,1)$, since $\xi \in [0,\infty)$. One possible way to do that is to set $\xi = \kappa \, \frac{1+z}{1-z}$, where $\kappa$ is a constant indicating the half number of the collocation points assigned on the interval $[0,\kappa]$, and $z$ is the Chebyshev Gauss-Lobatto collocation points on the interval $[-1,1]$, for instance see \cite{STT2015} and \cite{T2001}. {Because of the homogeneous Dirichlet boundary condition $v(\infty)=0$ on the right, we remove the last Chebyshev point, and, consequently, delete the last row and the last column of the matrix $\mathbf{M}$ in \eqref{M evolution}. This is similar with the singular behavior at $x \rightarrow \infty$ in \cite{BK2015} in which the authors impose No boundary condition at $x \rightarrow \infty$.} 
The Laplacian operator can be discretized by Chebyshev-Gauss-Lobatto differentiation (for details refer to \cite{STT2015} and \cite{T2001}). We denote the discretized Laplacian with $N+1$ collocation points by ${\Delta}_N$. {Note that
$$\Delta_{\xi} v=v_{\xi \xi}+\dfrac{d-1}{\xi}v_{\xi}.$$
The singularity at $\xi=0$ is eliminated by applying the L'H\^ospital's rule:
$$\lim_{\xi \rightarrow 0} \dfrac{d-1}{\xi}v_{\xi} =(d-1)v_{\xi \xi}. $$}

We use the following notation for $v$: let $v^{(m)}$ be the discretized $v$ at the time $\tau=m \cdot \delta \tau$, where $\delta \tau$ is the time step and $m$ is the iteration. The time evolution of (\ref{DRNLSF}) can be approximated by the second order Crank-Nicolson-Adam-Bashforth method:
\begin{align}\label{AB}
 i\frac{v^{(m+1)}-v^{(m)}}{\delta \tau}+\frac{1}{2}\left(\Delta_N v^{(m+1)}+{\Delta}_N v^{(m)} \right)+\frac{1}{2}\left( 3\mathcal{N}(v^{(m)})-\mathcal{N}(v^{(m-1)})\right)=0.
\end{align}
We reorganize (\ref{AB}) as
\begin{align}
\left(\frac{i}{\delta \tau}+\frac{1}{2}{\Delta}_N\right)v^{(m+1)}=\left( \frac{i}{\delta \tau}-\frac{1}{2} {\Delta}_N \right)v^{(m)}- \frac{1}{2}\left( 3\mathcal{N}(v^{(m)})-\mathcal{N}(v^{(m-1)})\right),
\end{align}
which is equivalent to
\begin{align}\label{M evolution}
\mathbf{M}v^{(m+1)}=F(v^{(m)},v^{(m-1)}).
\end{align}
Therefore, the time step is updated by
\begin{align}
v^{(m+1)}=\mathbf{M}^{-1}F(v^{(m)},v^{(m-1)}).
\end{align}
The inverse of the matrix $\mathbf{M}$ can be calculated only once in the beginning, since $\mathbf{M}=\left(\frac{i}{d\tau}+\frac{1}{2}{\Delta}_N \right)$, which stays the same.

{
The boundary conditions \eqref{DRNLS_BC} are imposed similar to \cite{MPSS1986}, \cite{T2001} and \cite{STT2015} as follows:
For the homogeneous Neumann boundary condition ($v_{\xi}(0)=0$) on the left, we substitute the first row of the matrix $\mathbf{M}$ by the first row of the first order Chebyshev differential matrix, and change the first element of the vector $F$ to $0$. Because of the homogeneous Dirichlet boundary condition $v(\infty)=0$ on the right, we delete the last row and column of $\mathbf{M}$ as well as the last element of the vector $F$.}

An alternative method is to use predictor-corrector method similar to \cite{F2016},
\begin{align}\label{predictor}
i\frac{v^{(m+1)}_{\text{pred}}-v^{(m)}}{\delta \tau}+\frac{1}{2}\left({\Delta}_N v^{(m+1)}_{\text{pred}}+{\Delta}_N v^{(m)} \right)+\frac{1}{2}\left( 3\mathcal{N}(v^{(m)})-\mathcal{N}(v^{m-1})\right)=0, \,\, (\text{P})
\end{align}
\begin{align}\label{corrector}
i\frac{v^{(m+1)}-v^{(m)}}{\delta \tau}+\frac{1}{2}\left({\Delta}_N v^{(m+1)}+{\Delta}_N v^{(m)} \right)+\frac{1}{2}\left( \mathcal{N}(v^{(m+1)}_{\text{pred}})+\mathcal{N}(v^{(m-1)})\right)=0. \,\, (\text{C})
\end{align}

The above two schemes lead to the similar results. Numerical test suggests that the scheme \eqref{predictor} and \eqref{corrector} is slightly more accurate than the scheme \eqref{AB}, though it is still second order in time and doubles the computational time. We mainly use the ``predictor-corrector" scheme \eqref{predictor} and \eqref{corrector} in our simulation.

{Note that it is sufficient to use $N=256$ collocation points. For any sufficiently smooth function $f$ over $[-1,1]$, $f(x)$ can be expanded by the Chebyshev polynomials
\begin{align}\label{f cheby expansion}
f(x)=\sum_{n=0}^{\infty} c_n T_n(x), \quad c_n \rightarrow \infty \,\,\mathrm{as} \,\, n\rightarrow \infty.
\end{align} 
Therefore, we choose a number $N_0$ such that the coefficients $c_n$ are sufficiently small (generally reaches the machine's accuracy $10^{-16}$) for $n \geq N_0$. Figure \ref{fig. cheby_coe} shows the Chebyshev coefficients $|c_n|$ of the numerical solution $v$ in the case $d=4$ at different times $\tau=0$ and $\tau=400$. One can see that $|c_n|$ reaches the order $10^{-12}$ around $N_0=128$, and reaches the machine's accuracy around $N_0=170$. Hence, it is sufficient to choose any $N>N_0$. Since the Chebyshev transform can be processed by the fast Fourier transform (fft) (e.g., \cite{T2001}, \cite{STT2015} and \cite{BK2015}), we choose $N=256$, a power of $2$. Similar study for the decaying behavior of the Chebyshev coefficients $c_n$ for multidomain spectral method for Schr\"odinger equation can be found in \cite{BK2015}.

\begin{figure}[ht]
\includegraphics[width=0.45\textwidth]{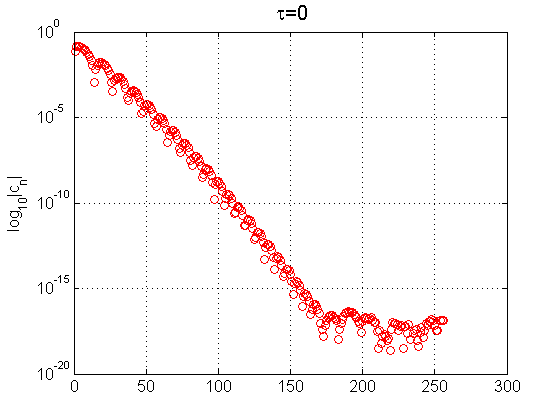}
\includegraphics[width=0.45\textwidth]{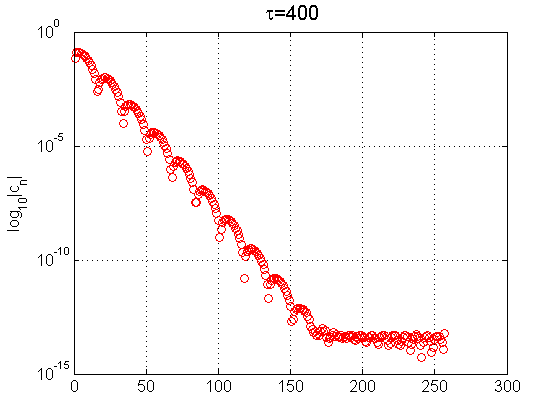}
\caption{The Chebyshev coefficients for the solution $v$ in the case $d=4$ at time $\tau=0$ (left) and $\tau=400$ (right). One can see the value $|c_n|$ reaches the machine's accuracy around $N_0=170$.}
\label{fig. cheby_coe} 
\end{figure}

 }

We choose $\kappa=256$ and $d\tau=2\times10^{-3}$. We set $v_0(0)=1$ and we choose to fix the value $\|v(\tau)\|_{\infty}\equiv 1$ in time $\tau$. Then the parameter $L(t)$ becomes $L(t)=\left( 1/ \|u(t)\|_{L^\infty} \right)^{\sigma}$, see \cite{LePSS1988}. We decided not to fix the value of $\|\nabla v(\tau)\|_{L^2}$ to be constant in high dimensions since it involves integrating $\int_0^{\infty} ... \xi^{d-1} d\xi$, which is large when the dimension $d$ increases. These two different scalings actually lead to the same slope of $L(t)$ on the $\log$ scale, i.e., $ \left(\log(L)/\log(T-t)\right) \approx \frac{1}{2}$, and also to the same decay property of $a(\tau)$, when we test the cases $d=4$ and $d=5$. Here, the term $\int_0^{\infty} ... \xi^{d-1} d\xi$ can be reduced to the order $\int_{-1}^{1-\delta_0} ... (1-z)^{3.5-d} dz$ by calculating the qunatity $a(\tau)$ from the variable $z$, where $\delta_0=1-z_{N}$ and $z_N$ is the second last discretized Chebyshev Gauss-Lobatto collocation point, i.e.,
\begin{align*}
a(\tau)&= -\frac{2}{p \|v_0 \|_{L^2}^2} \int_0^{\infty} |v|^{2\sigma} \mathrm{Im}\left(\bar{v} \Delta v \right) \xi^{d-1} d \xi \\
&=-\frac{2}{p \|v_0 \|_{L^2}^2} \int_{-1}^{1-\delta_0} |v|^{2\sigma} \mathrm{Im}( \alpha^{\frac{d}{2}-1} (1-z)^{3.5-d}(1+z)^{d-0.5}v_{zz}-\\
&{\alpha}^{d-2}(1-z)^{2.5-d}(1+z)^{d-0.5}v_z+\\
&(d-1) \alpha^{d-2}(1+z)^{d-1.5}(1-z)^{2.5-d}v_z ) \frac{1}{\sqrt{(1+z)(1-z)}}dz.
\end{align*}

{The second time step $v^{(1)}$ can be obtained by the standard second order explicit Runge-Kutta method (RK2). We stopped our simulations at the dimension $d=12$, see details and discussion on this in Section \ref{S:SP}.}

\subsection{Numerical Results}\label{S:Numerics} %\texttt{\\}
\subsubsection{Initial Data}
As in \cite{MPSS1986}, we use the Gaussian-type data $u_0(r)=Ae^{-r^2}$ as the initial data\footnote{We also ran simulations with the ground state-type of initial data $u_0 \sim A e^{-r}$, and the results were similar. For brevity, we only include the discussion on the Gaussian-type data.}, which leads to the self-similar blow-up solutions concentrated at the origin (Towerne profiles). 
We actually work with data $u_0=A_0 \, e^{-\frac{r^2}{d}}$. The reason for not taking $u_0=A\, e^{-r^2}$ as the initial data is that the amplitude $A_0$ becomes very large in higher dimensions; while the normalization term $\frac{r^2}{d}$ allows us to keep $A_0$ reasonably small.
The typical initial data is given in Table \ref{Initial}.
\begin{table}[ht]
\begin{tabular}{|c|l|l|c|c|l|}
\hline
$d$&$\|Q\|_2^2$&$\|e^{-\frac{r^2}{d}}\|_2^2$&$\tilde{A}$&$A_0$&$\|u_0\|_2^2$ \\
\hline
$4$& $20.7129$&$2$&$3.2181$&$5$&$50$\\
\hline
$5$&$112.6131$&$6.5683$&$4.1406$&$6$&$ 236.46$\\
\hline
$6$&$765.0696$&$27$&$5.3231$&$8$&$1728$\\
\hline
$7$&$6236.3848$&$133.2859$&$6.8403$&$10$&$13329$\\
\hline
$8$&$59304.81$&$768$&$8.7875$&$15$&$172800$\\
\hline
$9$&$644519.4793$&$5059.0686$&$11.2871$&$20$&$ 2.0236e6$\\
\hline
$10$&$7880266.4892$&$37500$&$14.4962$&$25$&$ 2.3437e7$\\
\hline
$11$&$107056593.2682$&$308902.5995$&$18.6164$&$30$&$ 2.7801e8$\\
\hline
$12$&$1586849773.5085$&$2799360$&$23.8089$&$40$&$4.4790e9$\\
\hline
\end{tabular}
\linebreak
\linebreak
\caption{Details about the initial data used in the dimension $d$, $u_0=A_0e^{-\frac{r^2}{d}}$, also the threshold amplitude for blow-up $\tilde{A}$, and the $L^2$ norms of the ground state $Q$, $e^{-\frac{r^2}{d}}$ and $u_0$.}
\label{Initial}
\end{table}

This table lists the mass of the ground state $Q$, the mass of $e^{-\frac{r^2}{d}}$, and we also list the threshold of the amplitude $\tilde{A}$ for the blow-up v.s. global existence. We list an example of the amplitude $A_0$ that we use in our simulations to be specific, though any amplitude $A_0>\tilde{A}$ gives the same result.

Our next Table \ref{Consistency} shows how the quantity $\|v\|_{L^{\infty}_{\xi}}$ is conserved in the rescaled time $\tau$, i.e., we track the error 
\begin{equation}\label{E:E-error}
\mathcal{E}=\max\limits_{\tau} (\|v(\tau)\|_{L^{\infty}_{\xi}})-\min\limits_{\tau} (\|v(\tau)\|_{L^{\infty}_{\xi}}).
\end{equation}
\begin{table}[ht]
\begin{tabular}{|c|c|c|c|c|c|c|c|c|c|}
\hline
$d$&$4$&$5$&$6$&$7$&$8$&$9$&$10$&$11$&$12$\\
\hline
$\mathcal{E}$&$6e-9$&$5e-10$&$1e-9$&$1e-9$&$1e-9$&$1e-9$&$2e-9$&$3e-10$&$9e-10$\\
\hline
\end{tabular}
\linebreak
\linebreak
\caption{The error $\mathcal{E}$ from \eqref{E:E-error} on the conserved quantity $\|v(\tau)\|_{L^{\infty}_{\xi}} \equiv 1$ in $\tau$ by using the predictor-corrector method (\ref{predictor}) and (\ref{corrector}) with $\delta \tau=2\times 10^{-3}$.}
\label{Consistency}
\end{table}

We comment that one should avoid choosing initial data too flat around the origin, for example, something like super-Gaussian data $u=A_0e^{-r^4}$, since this may lead to the collapsing rings instead of the Towerne profiles which we are trying to track here, for example, see \cite{HRP2010}, \cite{HR2011}, \cite{HR2007}, \cite{F2016}.

\subsubsection{Blow-up rate}
{We discuss our numerical results in the case $d=4$, the results for the dimensions $d = 5, ..., 12$ are similar, we omit them here, and refer the reader to \cite{Kai-Thesis}. }
We first observe that
\begin{equation}\label{R:half}
L(t)\sim \sqrt{T-t},
\end{equation}
see Figures \ref{fig:1.1}. {
The slope of $L(t)$ in the log scale is shown on the left, where we plotted $\log(T-t)$ vs. $\log(L)$; it gives a straight line with the slope approximately $\frac12$ (the slope is $0.50301$ in Figure \ref{fig:1.1} (left). This confirms the square root in \eqref{R:half}. We remark that for $d=2$ and $d=3$ similar computations and graphs were obtained in \cite{MPSS1986}.}
\begin{figure}[ht]
\includegraphics[width=0.45\textwidth]{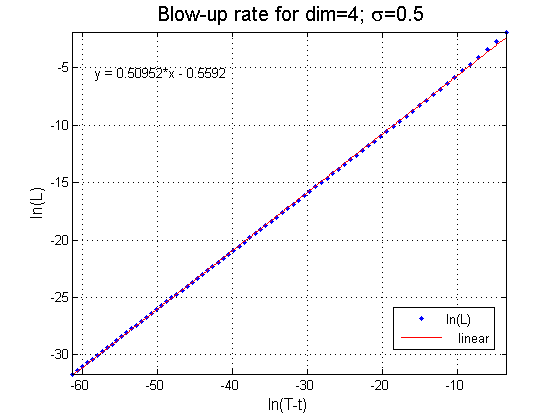}
\includegraphics[width=0.45\textwidth]{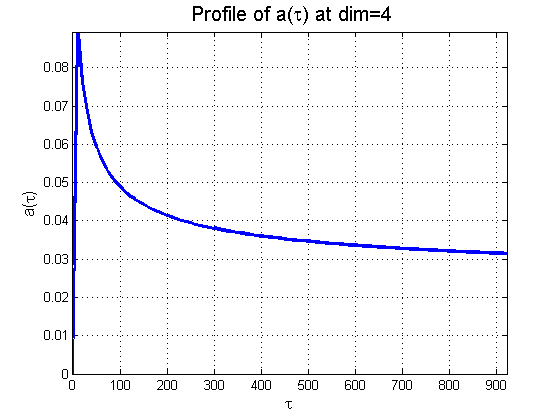}
\caption{The 4d case: the slope of $L(t)$ on the log scale (left); the behavior of $a(\tau)$ (right).}\label{fig:1.1}
\end{figure}

Our next task is to obtain the correction term in the rate \eqref{R:half}. For that, as it is discussed in \cite{SS1999}, one has to study the behavior of $a(\tau)$, defined in \eqref{DRNLS}-\eqref{E:a}. We show the behavior of $a(\tau)$ in Figures \ref{fig:1.1} on the right; the graph shows a very slow decay of $a(\tau)$, which is also similar to the 2d case in \cite{MPSS1986}.

The formal asymptotic analysis in \cite{LPSS1988} (see also \cite[Section 8]{SS1999}) investigates the decay rate of $a(\tau)$. For that, first, substituting $v=e^{i \tau-i a\xi^2/4}w$ into \eqref{DRNLS}, the equation for $w$ is obtained: 
\begin{equation}\label{E:w}
iw_\tau + \Delta w - w +\frac14 b(\tau) \xi^2 w+|w|^{2 \sigma} w = 0,
\end{equation}
where $b(\tau) = a^2 + a_\tau = -L^3 L_{tt}$. Then, studying the asymptotic behavior of solutions to \eqref{E:w}, a description of the asymptotic form of collapsing solutions very close to singularity is given in \cite[Section 8.2]{SS1999}. 
In particular, \cite[Proposition 8.5]{SS1999} shows the following behavior of parameters:
\begin{equation}\label{E:parameters1}
a = -L_t L, \qquad \tau_t = L^{-2}, \qquad L^3L_{tt} = -b,
\end{equation}
where
\begin{equation}\label{E:b}
b = a^2+a_\tau \approx a^2 \quad \mbox{obeys} \quad b_\tau = -\frac{2N_c}{M}\nu(\sqrt{b}) \approx -\frac{2 \nu_0^2}{M} e^{-\pi/\sqrt{b}}
\end{equation}
(thus, $a_{\tau} \ll a^2$)
with
$$
N_c = \int_0^\infty Q^2 r^{d-1} \, dr, \quad M = \frac14 \int r^2 Q^2 r^{d-1} \, dr \quad \mbox{and} \quad
\nu_0 = \lim\limits_{r \to \infty} e^r r^{\frac{d-1}2} Q(r).
$$
The above characterization of parameters was first derived by Fraiman in 1985 \cite{F85} via perturbation arguments for the linear stability analysis around $Q$, and independently by Papanicolaou et al. in \cite{LePSS1988} and \cite{LPSS1988} via a solvability condition from considering dimension as a continuous variable decreasing down to the $L^2$-critical value $\frac2{\sigma}$. At the first leading order, as $\tau \to \infty$, it was shown that $a(\tau) \approx b^{1/2}$ decays at the following rate 
$$
a(\tau) \approx b^{1/2} \sim \frac{\pi}{\ln \tau},
$$ 
i.e., slower than any polynomial rate in $\tau$, see \cite[Proposition 8.6]{SS1999}.
When more corrective terms are retained, then 
$$
\ds a(\tau) \approx \frac{\pi}{\ln \tau + c \cdot \ln \ln \tau},
$$ 
and from $a_{\tau} \approx - \frac{\nu_0^2}{M} a^{-1}e^{-\pi/a}$ (consequence of the solvability condition) one concludes that $c=3$. Furthermore, the scaling factor $L(t)$ has the asymptotic form
\begin{equation}\label{E:loglog-rate}
L(t) \approx \left( \frac{2 \pi (T-t)}{\ln \ln(\frac{1}{T-t})}\right)^{\frac{1}{2}}.
\end{equation}

Following \cite{LPSS1988}, we further investigate numerically the correction term in $L(t)$ in \eqref{R:half} and study the slope of $a(\tau)$ as a function of $1/(\ln \tau + 3 \ln \ln \tau)$. In Figure \ref{fig:1.3}, we  observe that the function $a(\tau)$ vs $1/(\ln \tau +3 \ln \ln \tau)$ is a straight line. It would now be very tempting to make a conclusion that $a(\tau) \sim 1/(\ln \tau+3 \ln \ln \tau)$, as it was also predicted in $d=2$ in \cite{LPSS1988} and \cite{LePSS1988}. However, more care is needed here. 
%as there is a caveat 
\begin{figure}[ht]
\includegraphics[width=0.45\textwidth]{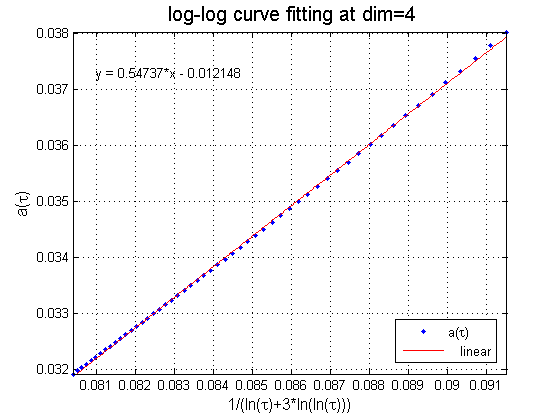}
\caption{\label{fig:1.3} The curve fitting for the $a(\tau)$ for 4d case. This initially suggests that $a(\tau) \sim 1/(\ln \tau + 3\ln \ln \tau)$.}
\end{figure}

If we change the constant $c$ dependence in the second term of $1/(\ln\tau + c \, \ln \ln \tau)$ and try to investigate the slope of $a(\tau)$ as a function of $1/(\ln\tau + c \ln \ln \tau)$ for different $c$, including zero and large constants (we tried $c = 0, 1, 3, 100, 1000$), we find that the slope does not change, which only confirms that such a correction is difficult to investigate numerically, since it only happens at the very high focusing levels. Also, one notes that the slope of the line is not $\frac{1}{\sqrt{2\pi}}$ as expected from asymptotic analysis from \cite{LPSS1988},
where it states that the correction term for $a(\tau)$ is given by 
$$
\ds q(t) \approx \left(\frac{2\pi}{\ln \ln \frac{1}{|T-t|}}\right)^{\frac{1}{2}}.
$$
This is because at the time we stop our simulations (we are forced to stop as the maximal machine's precision is reached), the values of $a(\tau)$ are still far from $0$, this is also observed in \cite{LPSS1988}, \cite{LePSS1988}, \cite{SS1999}. Hence, further justification for the correction term is needed and we discuss it in the next subsection.
%(We note that similar results hold in higher dimensions, see Appendix A for our findings in dimensions up to 12.)

\subsubsection{Further justification of ``log-log" correction}
In \cite[Chapter 18]{F2016}, Fibich shows that the log-log regime is reached when the focusing amplitude of the solution is extremely large (e.g., a necessary condition for the log-log to hold in dimension 2 is $A \gg 10^{48}$), which is basically impossible to observe numerically (although see \cite{LDV2013} for even higher order correction terms in the blow-up rate). Instead he suggests studying the reduced equations, where the equations \eqref{E:parameters1}-\eqref{E:b} are written in the following form
\begin{equation}\label{E:reduced-system}
%L^3 L_{tt} = -b, \qquad  
b_{\tau}(\tau)=-\nu(b), \qquad a_{\tau}(\tau)=b-a^2,
\end{equation}
where $\nu(b)=c_{\nu}\, e^{-\pi/\sqrt{b}}$ with $c_\nu = 2 \nu_0^2/M$. 
Our numerical calculations give $c_{\nu} \approx 44.8$ in $d=2$ (matching \S 17.7 in \cite{F2016}) and $c_{\nu} \approx 53.11$ in $d=4$. 
The advantage of working with the system \eqref{E:reduced-system} as mentioned in \cite{F2016} is that it can be solved by standard numerical methods over hundreds of orders of magnitudes without a significant deterioration in the numerical accuracy. 
We solve this system by RK2 and then recover $L(\tau)$ and $T-t$ from numerical quadratures of 
\begin{equation}\label{reduced eqn2}
L(\tau)=L(0)e^{-\int_0^{\tau}a(s)ds}, \qquad T-t=\int_{\tau}^{\infty} \frac{dt}{d\tau}d\tau = \int_{\tau}^{\infty} L^2(\tau)d\tau.
\end{equation}
We show our results for $d=4$ in Figure \ref{reduced 4d}. On the left subfigure in Figure \ref{reduced 4d}, we plot the behavior of $b(t)$ as a function of $L(t)$ for three approximations: a solid blue line is the numerical solution, a dash-dot pink (straight) line is a strict adiabatic approximation $b(t) \equiv b(0) = 10^{-1}$ and the dashed green line is an asymptotic approximation of the log-log law $b(\tau) \sim \frac{\pi^2}{\ln^2\tau}$. One can see that even when the amplitude reaches focusing of $10^{250}$ (i.e., $L(t) \sim 10^{-250}$) the log-log law is further away from the numerical solution than the strict adiabatic law, though it is slowly decreasing down.  
\begin{figure}[ht]
\includegraphics[width=0.45\textwidth]{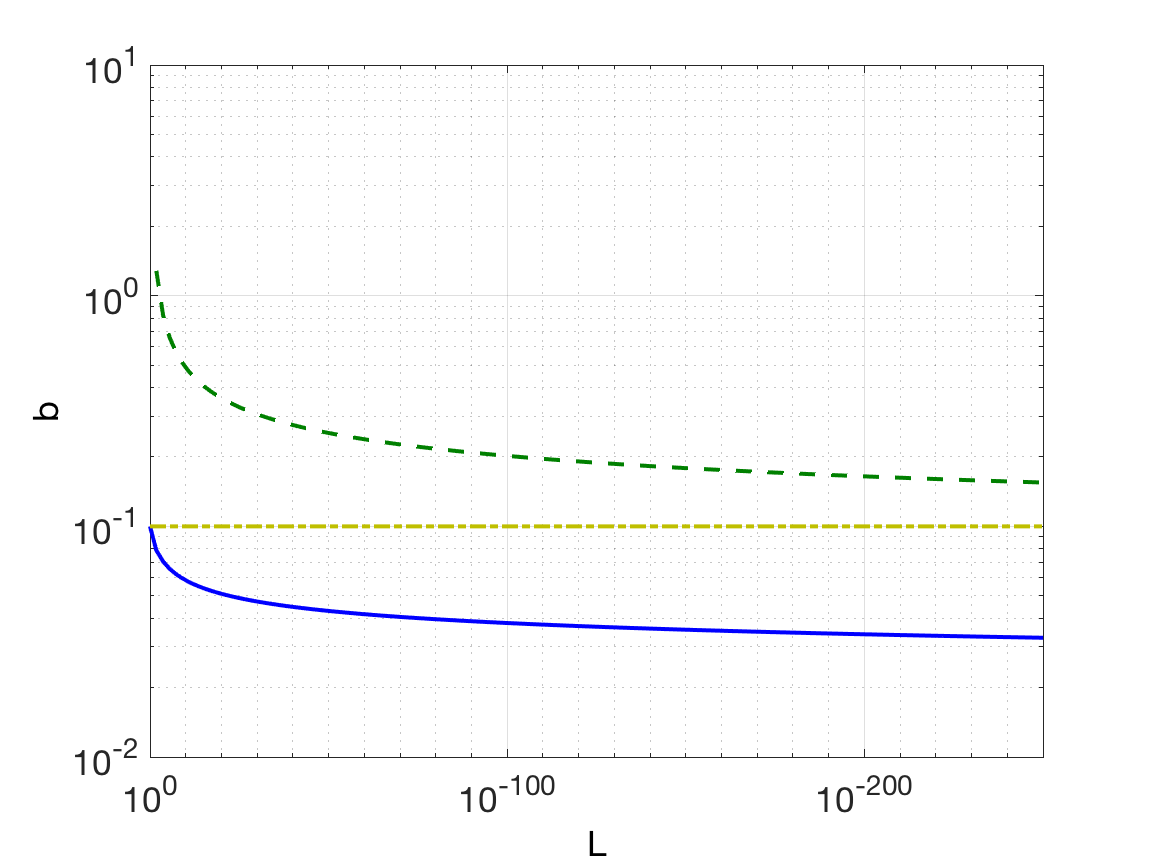}
\includegraphics[width=0.45\textwidth]{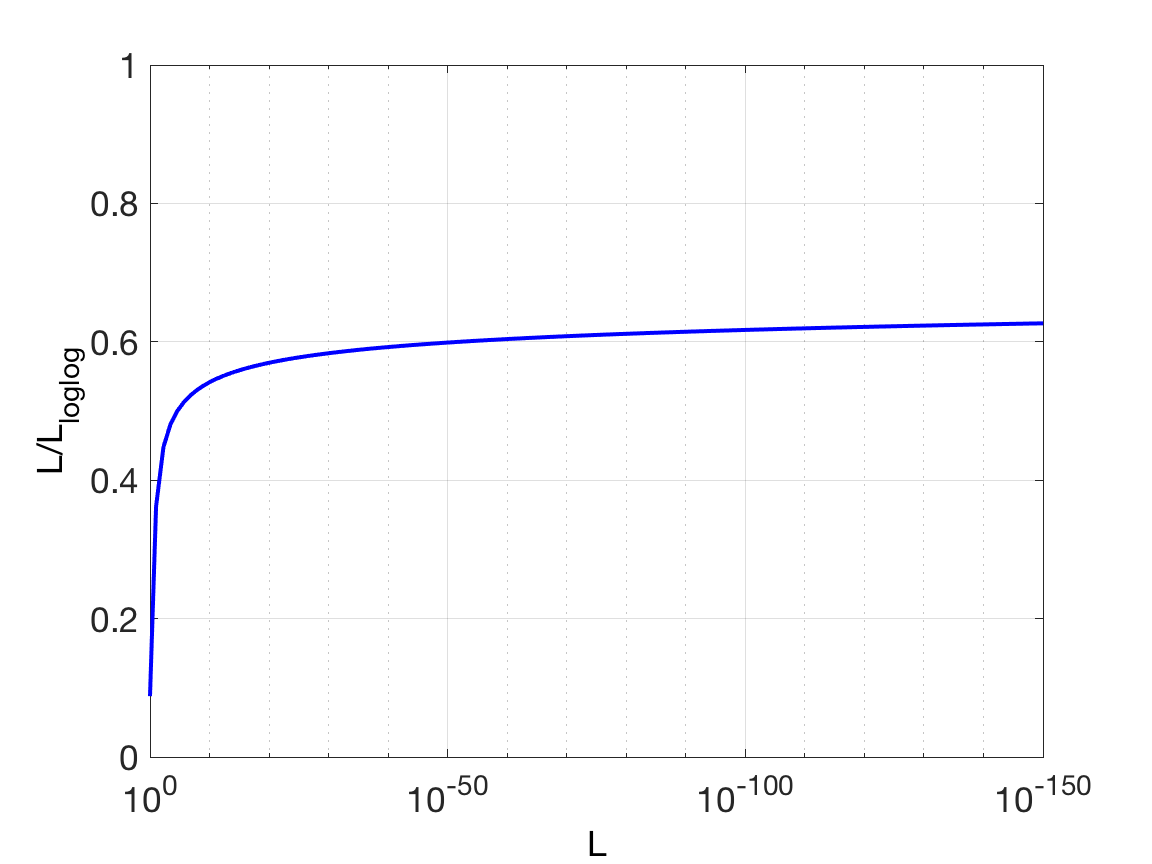}
\caption{The reduced equations \eqref{E:reduced-system}-\eqref{reduced eqn2} in $d=4$, $b(0)=0.1$, $L(0)=1$, and $L_{t}(0)=0$. Left: $b(t)$ as a function of $L(t)$, where solid curve is a numerical solution of the reduced system \eqref{E:reduced-system}, dashdot curve is a strict adiabaticity $b \equiv b(0)$, and dash curve is an asymptotic approximation for the log-log law. Right: the ratio of $L(t)/L_{\log\log}$.}
\label{reduced 4d}
\end{figure}

On the right subfigure in Figure \ref{reduced 4d} the ratio $L(t)/L_{\log\log}$ is shown, where $L_{\log\log}=\left(\dfrac{2\pi(T-t)}{\ln\ln(\frac{1}{T-t})}\right)^{\frac{1}{2}}$ is the anticipated blow-up rate. It is slowly growing towards $1$, however, still a bit far from it. 
These figures also show that there is an intermediate adiabatic regime in the blow-up dynamics, which asymptotically approaches the log-log regime. The adiabatic laws are obtained in \cite{FP1999} and \cite{M1993}, and further discussed in \cite{F2016}.

While it is challenging to observe the log-log correction term numerically, in \cite{ADKM2003} the functional form testing was suggested %(on top of very accurate at that time computations) 
and the authors succeeded in showing that among all tested functional forms, the log-log form stabilized the power $1/2$ in the rate approximation \eqref{FL} and also
minimized the fitting errors the best as the computation increased focusing levels. 
%came very close to the blow-up time. 
Moreover, the authors were able to show that before the log-log regime, the blow-up mechanism seemed to  follow the adiabatic regime. Specifically, it was supposed in \cite{ADKM2003} that
\begin{equation}\label{FL}
\dfrac{1}{L(t)} \sim \left( \dfrac{F(T-t)}{T-t} \right)^{\frac{1}{2}},
\end{equation}
where $F(s)= \big( \ln \frac{1}{s} \big)^{\gamma}$, $\gamma=1, 0.6, 0.5, 0.4, ...,0$, or $F(s)=\ln \ln \frac{1}{s}$. Then $\frac{1}{L(t_i)}$ was computed at the time $t_i$ as well as the approximation parameter
\begin{equation}\label{rho}
\rho_i=\dfrac{L(t_i)}{L(t_{i+1})}\Big/\ln \left( \dfrac{F_{i+1}/(T-t_{i+1})}{F_{i}/(T-t_{i})} \right).
\end{equation}
Since the power $\rho$ is expected to be $\frac{1}{2}$ (after all it is square root decay), one can check how fast the parameter $\rho_i$ converges to $\frac{1}{2}$ and what choice of $F(s)$ gives the best approximation. In \cite{ADKM2003} it was shown that $F(s)=\ln \ln \frac{1}{s}$ provides the fastest and best parameter $\rho_i$ {\it stabilization}, moreover, it %$F(s)=\ln \ln \frac{1}{s}$ 
also gives the optimal quantity in several error estimates such as the standard deviation, $l^1$-norm discrepancy, and $l^2$-norm discrepancy, which gave an extra assurance of $\ln \ln \frac{1}{s}$ selection for $F(s)$.

In our further numerical investigation of the correction term, we use this functional approach as well. We observe that the ``log-log" correction does stabilize the power $\rho$ close to $\frac12$ the best, and it minimizes the approximation errors quite good (eventually) in the functional fitting. For this purpose, we compute $\frac{1}{L(t)}$ as defined in \eqref{FL}, $\rho_i$ as defined in \eqref{rho}, and $\epsilon_i$, the $l^2$ discrepancy, defined as (following \cite{ADKM2003}) 
\begin{equation}\label{l2-deviation}
\epsilon_i = \left[\frac1{i-j_0+1} \sum_{j=j_0}^{i}\left(\frac{1}{2}-\rho_j\right)^2 \right]^{ \frac{1}{2}}. 
\end{equation}

{
Before proceeding with our fitting results, one other remark is needed. We need to specify the process of calculating the quantity $(T-t_i)$. We take the blow-up time $T$ to be the time when we end our simulation. we have $L(\tau_{i+1})=\exp(\ln L(\tau_{i+1}))$. Denoting $\Delta t_{i+1}:=t_{i+1}-t_i$, and from the last equation of \eqref{rescaled initial},
\begin{align}
\Delta t_{i+1}= \delta \tau L^2(\tau_{i+1}).
\end{align}
Thus, the mapping for rescaled time $\tau$ back to the real time $t$ is calculated as 
\begin{align}
t(\tau_{i+1}) = t((i+1)\delta \tau) := \sum_{j=1}^{i+1} \Delta t_j = \delta \tau\sum_{j=0}^{i+1}  L(\tau_j)^2.
\end{align}
%Finally, the numerical solution $u_j^{(i+1)} \approx u(\xi_jL(\tau_{i+1}),\tau_{i+1})$ can be reconstructed.

Note that as time evolves, the time difference $T - t(\tau_i)$ will become smaller and smaller, and eventually reach saturation level (with little change), therefore we treat the stopping time $t(\tau_{\text{end}}) = t(\tau_{M})$ as the blow-up time $T$ where $M$ is the total number of iterations when reaching the stopping condition ($L<10^{-17}$). Then, we can take
\begin{align}
T = t(\tau_{\text{end}}) = t(M\delta \tau) = \delta \tau\sum_{j=0}^M  L(\tau_j)^2.
\end{align}
Consequently for any $t_i$, we can calculate $T-t_i$ as 
\begin{align}\label{T-t}
T-t_i=\sum_{j=i+1}^M \Delta t_j = \delta \tau\sum_{j=i+1}^M  L(\tau_j)^2.
\end{align}
This indicates that instead of recording the cumulative time $t_i$, we just need to record the elapsed time between the two recorded data points, i.e., $\Delta t_i =t_{i+1}-t_{i}$. By doing so, it can avoid the loss of significance when adding a small number onto a larger one. }

%We record $m+1$ values for the curve matching at the time $t=t_i$, where $i=0,1,\cdots, m$. Note that $T=t_0+(t_1-t_0)+(t_2-t_1)+\cdots +(t_m-t_{m-1})$, or equivalently, $t_i=t_0+(t_1-t_0)+(t_2-t_1)+\cdots +(t_i-t_{i-1})$. Denoting $\Delta t_i=t_i-t_{i-1}$ and $\Delta t_0=t_0$, we get that $T=\sum_{i=0}^m \Delta t_i$, and thus,
%\begin{equation}\label{T-t}
%T-t_i=\sum_{j=i+1}^m \Delta t_j,
%\end{equation}
%with $T-t_m=0$ for the last element. This indicates that instead of recording the cumulative time $t_i$, we just need to record the time elapsed between the two recorded data points, i.e., $\Delta t_i =t_{i+1}-t_{i}$. This reduces the round off error significantly, since it avoids the summation of the quantities with the order beyond the machine's accuracy! For example, if $t_i=0.5$, then $t_{i+1}=0.5+10^{-20}$ will make no difference to $t_i$. However, the machine can deal with $\Delta t_i=10^{-20}$. We drop the last few values of $T-t_i$'s, since $T$ is not the real blow-up time, and thus, the last few values of $T-t_i$'s may not be accurate.

Continuing with the functional fitting for the correction term, we tried several choices for $F(s)$, namely, $F(s)=1$, $F(s) = \big(\ln \frac1{s}\big)^{\gamma}$, $\gamma = 0.5, 0.4, 0.3, 0.25, 0.2, 0.15, 0.1$ and $F(s)=\ln \ln \frac{1}{s}$.

In the dimension $d=4$, while studying the parameter $\rho_i$, we observe that out of all different powers of $\gamma$ that we tried, mentioned above, in the fitting of $F(s) = \big(\ln \frac1{s} \big)^\gamma$, the powers $0.25$ and $0.2$ seem to approximate the rate $\frac{1}{2}$ the best, see Table \ref{loglog-comparison-4d} for values of $\rho_i$. 
\begin{table}[ht]
\begin{tabular}{|c|c|c|c|c|c|c|c|}
\hline
\multicolumn{8}{|c|}{The fitting power $\rho_i$ from different corrections for $F(s)$}\\
\hline
$i$ &$\frac{1}{L(t)}$ range 
&$1$
&$(\ln \frac{1}{s})^{0.3}$  
&$(\ln \frac{1}{s})^{0.25}$  
&$(\ln \frac{1}{s})^{0.2}$  
&$(\ln \frac{1}{s})^{0.15}$  
&$\ln \ln \frac{1}{s}$ \\
\hline
$0$&$1e4 \sim 3e5 $&$0.5084$&$0.5006$&$0.5019$&$0.5032$&$0.5045$&$0.4997$ \\
\hline
$1$&$3e5 \sim 7e6 $&$0.5056$&$0.5003$&$0.5008$&$0.5018$&$0.5027$&$0.4997$ \\
\hline
$2$&$7e6 \sim 1e8 $&$0.5042$&$0.4998$&$0.5003$&$0.5011$&$0.5019$&$0.4997$ \\
\hline
$3$&$1e8 \sim 2e9 $&$0.5033$&$0.4995$&$0.5000$&$0.5007$&$0.5013$&$0.4997$ \\
\hline
$4$&$2e9 \sim 4e10 $&$0.5028$&$0.4994$&$0.4999$&$0.5004$&$0.5010$&$0.4997$ \\
\hline
$5$&$4e10 \sim 6e11 $&$0.5023$&$0.4994$&$0.4998$&$0.5003$&$0.5008$&$0.4997$ \\
\hline
$6$&$6e11 \sim 8e12 $&$0.5020$&$0.4993$&$0.4997$&$0.5002$&$0.5006$&$0.4997$ \\
\hline
$7$&$8e12 \sim 1e14 $&$0.5018$&$0.4993$&$0.4997$&$0.5001$&$0.5005$&$0.4997$ \\
\hline
$8$&$1e14 \sim 1e15 $&$0.5016$&$0.4993$&$0.4997$&$0.5000$&$0.5004$&$0.4997$ \\
\hline
$9$&$1e15 \sim 2e16 $&$0.5014$&$0.4993$&$0.4996$&$0.5000$&$0.5004$&$0.4997$ \\
\hline
\end{tabular}
\linebreak
\linebreak
\caption{Comparison of the different functional forms $F(s)$ for the correction term in $d=4$. ``$\frac{1}{L(t)}$ range" means the values $\frac{1}{L(t_i)} \sim \frac{1}{L(t_{i+1})}$. The blow-up rate $\rho$ seems to fit close to 0.5 in the intermediate regime for $F(s)=(\ln \frac{1}{s})^{\gamma}$ (for example, for $i=3, 4$ with $\gamma = 0.25$; for $i=7,8,9$ with $\gamma=0.2$), however, the best stabilization of the power is given by the log-log correction.}
\label{loglog-comparison-4d}
\end{table}
In fact, one can first observe that the functional forms $\big(\ln \frac{1}{s}\big)^{\gamma}$ decrease down to the power $\frac{1}{2}$ quite well, for example, $\gamma = 0.25$ gives the best approximation on steps $i=3, 4, 5$ (or corresponding time intervals) as it is the closest to $\frac{1}{2}$; $\gamma = 0.2$ gives the best approximation on steps $i = 6, 7, 8, 9$. However, the second observation is that all such forms tend to decrease down to the power $\frac{1}{2}$ and then continue decreasing further down (for example, the form with $\gamma=0.25$ starts decreasing on steps $i=4, 5,..., 9$ but then underperforms at the step $i=9$ compared to the log-log form), thus, eventually not representing the appropriate correction. Note that the functional form, which stabilizes well and stays quite close to $\frac{1}{2}$, is the log-log form. 

The first observation is due to the existence of an intermediate regime in the blow-up dynamics, the {adiabatic} regime. The second observation is indicating that the adiabatic regime goes asymptotically into the log-log regime.
 
We also computed the $l^2$ discrepancy $\epsilon_i$, defined by \eqref{l2-deviation}, starting from the step $j_0=0$ and accumulating up to the step $i$, and show the results in Table \ref{deviation-comparison-4d}. We note that in the dimension $d=4$ this cumulative error is minimized the best by the log-log correction.  
\begin{table}[ht]
\begin{tabular}{|c|c|c|c|c|c|c|c|}
\hline
\multicolumn{8}{|c|}{The $l^2$ discrepancy $\epsilon$ from different corrections $F(s)$}\\
\hline
$i$ &$\frac{1}{L(t)}$ range 
&$1$
&$(\ln \frac{1}{s})^{0.3}$  
&$(\ln \frac{1}{s})^{0.25}$  
&$(\ln \frac{1}{s})^{0.2}$  
&$(\ln \frac{1}{s})^{0.15}$  
&$\ln \ln \frac{1}{s}$ \\
\hline
$0$&$1e4 \sim 3e5 $&$0.0084$&$6.45e-4$&$0.0019$&$0.0032$&$0.0045$&$3.34e-4$ \\
\hline
$1$&$3e5 \sim 7e6 $&$0.0072$&$4.70e-4$&$0.0015$&$0.0026$&$0.0037$&$3.21e-4$ \\
\hline
$2$&$7e6 \sim 1e8 $&$0.0063$&$4.75e-4$&$0.0012$&$0.0022$&$0.0032$&$3.18e-4$ \\
\hline
$3$&$1e8 \sim 2e9 $&$0.0057$&$5.19e-4$&$0.0011$&$0.0019$&$0.0029$&$3.17e-4$ \\
\hline
$4$&$2e9 \sim 4e10 $&$0.0053$&$5.60e-4$&$9.42e-4$&$0.0017$&$0.0026$&$3.14e-4$ \\
\hline
$5$&$4e10 \sim 6e11 $&$0.0049$&$5.92e-4$&$8.65e-4$&$0.0016$&$0.0024$&$3.11e-4$ \\
\hline
$6$&$6e11 \sim 8e12 $&$0.0046$&$6.15e-4$&$8.07e-4$&$0.0015$&$0.0022$&$3.08e-4$ \\
\hline
$7$&$8e12 \sim 1e14 $&$0.0044$&$6.31e-4$&$7.63e-4$&$0.0014$&$0.0021$&$3.04e-4$ \\
\hline
$8$&$1e14 \sim 1e15 $&$0.0041$&$6.54e-4$&$7.29e-4$&$0.0013$&$0.0020$&$2.99e-4$ \\
\hline
$9$&$1e15 \sim 2e16 $&$0.0040$&$6.50e-4$&$7.00e-4$&$0.0012$&$0.0019$&$2.95e-4$ \\
\hline
\end{tabular}
\linebreak
\linebreak
\caption{Comparison of the $l^2$ discrepancy $\epsilon_i$ in the fitting of different correction terms in $d=4$. One can see that the log-log correction minimizes this error the best.}
\label{deviation-comparison-4d}
\end{table}
Since the log-log regime is an asymptotic regime, we also computed the $l^2$ discrepancy error starting from $j_0 = 7$ and up to the last reliable step $i=9$, i.e., for the window of the three last approximations (see also the discussion about the window of approximations in \cite{ADKM2003}), and recorded it in Table \ref{deviation2-comparison-4d}. 
\begin{table}[ht]
\begin{tabular}{|c|c|c|c|c|c|c|}
\hline
$F(s)$
&$1$
&$(\ln \frac{1}{s})^{0.3}$  
&$(\ln \frac{1}{s})^{0.25}$  
&$(\ln \frac{1}{s})^{0.2}$  
&$(\ln \frac{1}{s})^{0.15}$  
&$\ln \ln \frac{1}{s}$ \\
\hline
$\epsilon$&$0.0017$&$7.28e-4$&$3.24e-4$&$1.06e-4$&$5.00e-4$&$2.69e-4$\\
\hline
\end{tabular}
\linebreak
\linebreak
\caption{Comparison of the $l^2$ discrepancy $\epsilon_i$ computed in the window from $j_0=7$ to $i=9$ in the fitting of different correction terms in $d=4$.} 
%This means we only consider the behaviors close to blow-up. One can see the $F(s)=(\ln \frac{1}{s})^{0.2}$  correction minimizes the deviation the best at this stage.}
\label{deviation2-comparison-4d}
\end{table}
While one can observe that the form $F(s)=\big(\ln \frac{1}{s}\big)^{0.2}$ minimizes the discrepancy $\epsilon_i$ the best in Table \ref{deviation2-comparison-4d}, it is because this specific power of $\gamma = 0.2$ decreases down to $\frac{1}{2}$ the closest at that specific window. However, as discussed above, we expect it to keep decreasing, and thus, getting further away from $\frac{1}{2}$. In general, we suspect that all functional forms $\big(\ln \frac1{s} \big)^{\gamma}$ will for some period of time approximate the power $\frac{1}{2}$ well, but then will escape away from $\frac{1}{2}$, and thus, destabilize away from the blow-up regime. This is, of course, an area for further challenging numerical investigations, as well as possible testing of adiabatic regimes given by Malkin adiabatic law \cite{M1993} and Fibich adiabatic law \cite{F2016}. We note that the second best approximation in Table \ref{deviation2-comparison-4d} is produced by the log-log form. 

For dimension 5 we do a similar investigation and list the results of the functional fittings for $F(s) = 1, \left(\ln \frac1{s} \right)^{0.3}, \left(\ln \frac1{s} \right)^{0.25}, \left(\ln \frac1{s} \right)^{0.2}, \left(\ln \frac1{s} \right)^{0.15}$, and $\ln \ln \frac1{s}$, in Table \ref{loglog-comparison-5d}. 
One can observe that the forms of type $\left(\ln \frac1{s} \right)^{\gamma}$ give decreasing $\rho_i$ as the step $i$ increases; some of them reach the value $0.5$ during the calculated time period ($\gamma = 0.25, 0.2$) and some might reach it eventually ($\gamma = 0.15$). However, the continuing decrease of $\rho_i$ values  does not perform as good as the stabilization seen in the log-log form.  
\begin{table}[ht]
\begin{tabular}{|c|c|c|c|c|c|c|c|}
\hline
\multicolumn{8}{|c|}{The fitting power $\rho_i$ from different corrections for $F(s)$}\\
\hline
$i$ &$\frac{1}{L(t)}$ range 
&$1$
&$(\ln \frac{1}{s})^{0.3}$  
&$(\ln \frac{1}{s})^{0.25}$  
&$(\ln \frac{1}{s})^{0.2}$  
&$(\ln \frac{1}{s})^{0.15}$  
&$\ln \ln \frac{1}{s}$ \\
\hline
$0$&$2e3 \sim 6e4 $&$0.5093$&$0.4998$&$0.5013$&$0.5029$&$0.5045$&$0.4977$ \\
\hline
$1$&$6e4 \sim 2e6 $&$0.5060$&$0.4994$&$0.5005$&$0.5015$&$0.5026$&$0.4989$ \\
\hline
$2$&$2e6 \sim 4e7 $&$0.5044$&$0.4992$&$0.5001$&$0.5009$&$0.5018$&$0.4993$ \\
\hline
$3$&$4e7 \sim 7e8 $&$0.5034$&$0.4991$&$0.4999$&$0.5006$&$0.5013$&$0.4994$ \\
\hline
$4$&$7e8 \sim 1e10 $&$0.5028$&$0.4991$&$0.4997$&$0.5004$&$0.5010$&$0.4995$ \\
\hline
$5$&$1e10 \sim 2e11 $&$0.5024$&$0.4991$&$0.4997$&$0.5002$&$0.5007$&$0.4996$ \\
\hline
$6$&$2e11 \sim 3e12 $&$0.5020$&$0.4992$&$0.4996$&$0.5001$&$0.5006$&$0.4996$ \\
\hline
$7$&$3e12 \sim 5e13 $&$0.5018$&$0.4992$&$0.4996$&$0.5001$&$0.5005$&$0.4996$ \\
\hline
$8$&$5e13 \sim 7e14 $&$0.5016$&$0.4992$&$0.4996$&$0.5000$&$0.5004$&$0.4997$ \\
\hline
$9$&$7e14 \sim 1e16 $&$0.5014$&$0.4992$&$0.4996$&$0.5000$&$0.5003$&$0.4997$ \\
\hline
\end{tabular}
\linebreak
\linebreak
\caption{Comparison of the different functional forms $F(s)$ for the correction terms in $d=5$. ``$\frac{1}{L(t)}$ range" means the values $\frac{1}{L(t_i)} \sim \frac{1}{L(t_{i+1})}$. While all other forms give a slow decrease, the loglog form seems to stabilize to the power $1/2$ the best.}
\label{loglog-comparison-5d}
\end{table}

\begin{comment}
\begin{table}[ht]
\begin{tabular}{|c|c|c|c|c|c|}
\hline
\multicolumn{6}{|c|}{The standard deviation $\epsilon$ from different corrections}\\
\hline
$i$ &$\frac{1}{L(t)}$ range 
&$F(s)=1$
&$F(s)=(\ln \frac{1}{s})^{0.3}$
&$F(s)=(\ln \frac{1}{s})^{0.25}$
&$F(s)=\ln \ln \frac{1}{s}$ \\
\hline
$0$&$6e4 \sim 2e6 $&$0.0078$&$4.83e-4$&$0.0013$&$0.0018$ \\
\hline
$1$&$2e6 \sim 4e7 $&$0.0069$&$6.06e-4$&$9.87e-4$&$0.0015$ \\
\hline
$2$&$4e7 \sim 7e8 $&$0.0062$&$6.75e-4$&$8.07e-4$&$0.0013$ \\
\hline
$3$&$7e8 \sim 1e10 $&$0.0057$&$7.16e-4$&$7.03e-4$&$0.0012$ \\
\hline
$4$&$1e10 \sim 2e11 $&$0.0053$&$7.41e-4$&$6.39e-4$&$0.0011$ \\
\hline
$5$&$2e11 \sim 3e12 $&$0.0049$&$7.55e-4$&$5.98e-4$&$0.0011$ \\
\hline
$6$&$3e12 \sim 5e13 $&$0.0047$&$7.62e-4$&$5.49e-4$&$9.92e-4$ \\
\hline
$7$&$5e13 \sim 7e14 $&$0.0044$&$7.64e-4$&$5.34e-4$&$9.41e-4$ \\
\hline
$8$&$7e14 \sim 1e16 $&$0.0042$&$7.64e-4$&$5.22e-4$&$8.98e-4$ \\
\hline
\end{tabular}
\linebreak
\linebreak
\caption{Comparison of deviation $\epsilon$ in the fitting of different correction terms in $d=5$. One can see the log-log correction minimizes the deviation the best.}
\label{deviation-loglog-comparison-5d}
\end{table}
\end{comment}

We supply the $l^2$ discrepancy errors $\epsilon_i$ for the window $j_0 = 7$ to $i=9$ (the last 3 steps) in Table \ref{deviation-comparison-5d}, where the loglog fit has the smallest $l^2$ deviation.   
\begin{table}[ht]
\begin{tabular}{|c|c|c|c|c|c|c|}
\hline
$F(s)$
&$1$
&$(\ln \frac{1}{s})^{0.3}$  
&$(\ln \frac{1}{s})^{0.25}$  
&$(\ln \frac{1}{s})^{0.2}$  
&$(\ln \frac{1}{s})^{0.15}$  
&$\ln \ln \frac{1}{s}$ \\
\hline
$\epsilon$&$0.0017$&$7.98e-4$&$3.81e-4$&$6.74e-4$&$4.65e-4$&$3.41e-4$\\
\hline
\end{tabular}
\linebreak
\linebreak
\caption{The $l^2$ discrepancy $\epsilon$ starting from $j_0=7$ to $i=9$ in the fitting of different correction terms in $d=5$. This means we only consider the behaviors close to blow-up. One can see the log-log  correction minimizes the deviation the best at this stage.}
\label{deviation-comparison-5d}
\end{table}

We provide computations for the functional fittings for other dimensions in Appendix A.

Putting all our numerical calculations, asymptotical analysis and functional fitting results together, we conclude that the blow-up rate $L(t)$ (with the first term correction) is given by \eqref{E:loglog-rate}.

\subsubsection{Blow-up profile}
{
In this subsection we investigate profiles of the blow-up solutions, and show our results in dimensions $d=4$. Figure \ref{p4d} shows how the blow-up solutions $v=v(\xi,\tau)$ from \eqref{DRNLS} converge to the rescaled ground state $Q$, which leads to the conclusion that the profile is given by the rescaled (self-similar) version of the corresponding ground state $\frac1{L^{1/\sigma}} Q (\frac{r}{L})$. }

We plot three different times snapshots ($\tau=2, 40, 400$) and list the time values in both $\tau$ and $t$ variable (it is easier to distinguish and track the profiles in the rescaled $\tau$ variable, as $\tau \to \infty$, than in the variable $t$, which converges to some finite time $0<T<\infty$, and thus, $t$ maybe indistinguishable very close to $T$).
The results for other dimensions are similar, see in \cite{Kai-Thesis}.
\begin{figure}[ht]
\includegraphics[width=0.45\textwidth]{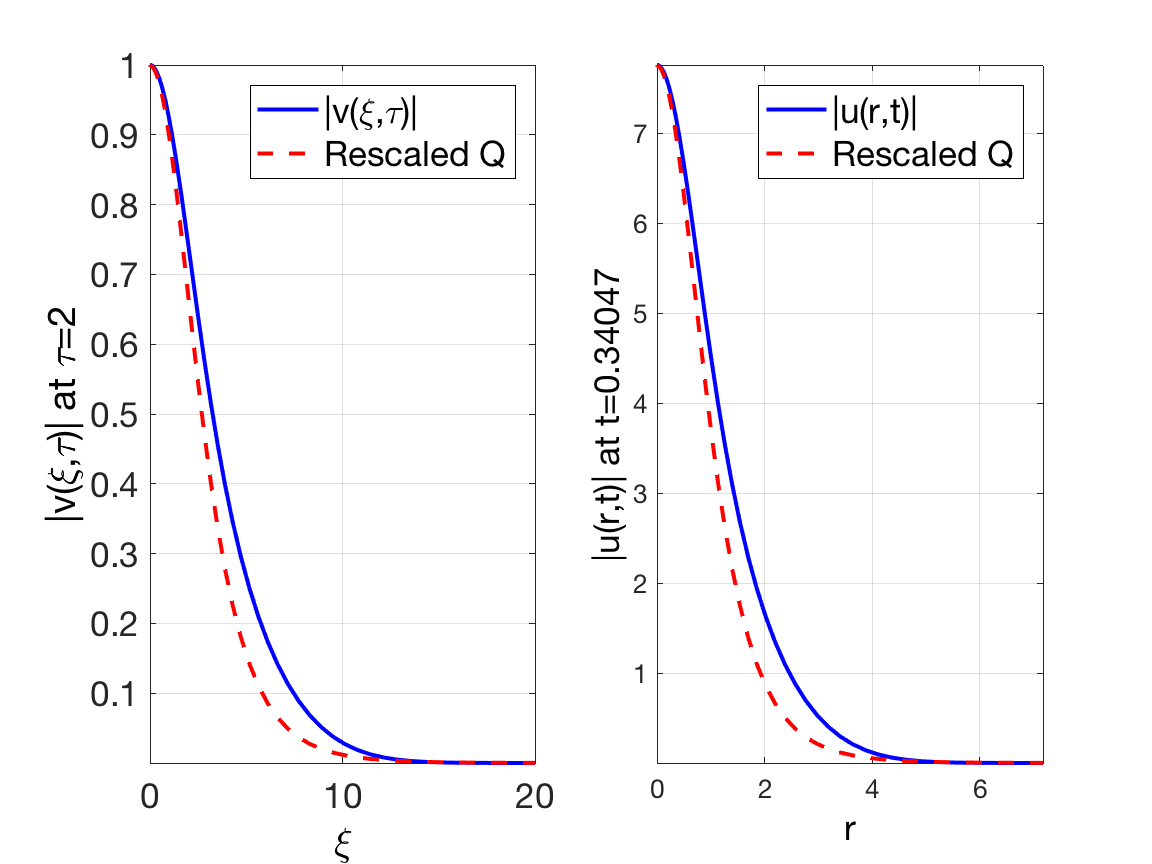}
\includegraphics[width=0.45\textwidth]{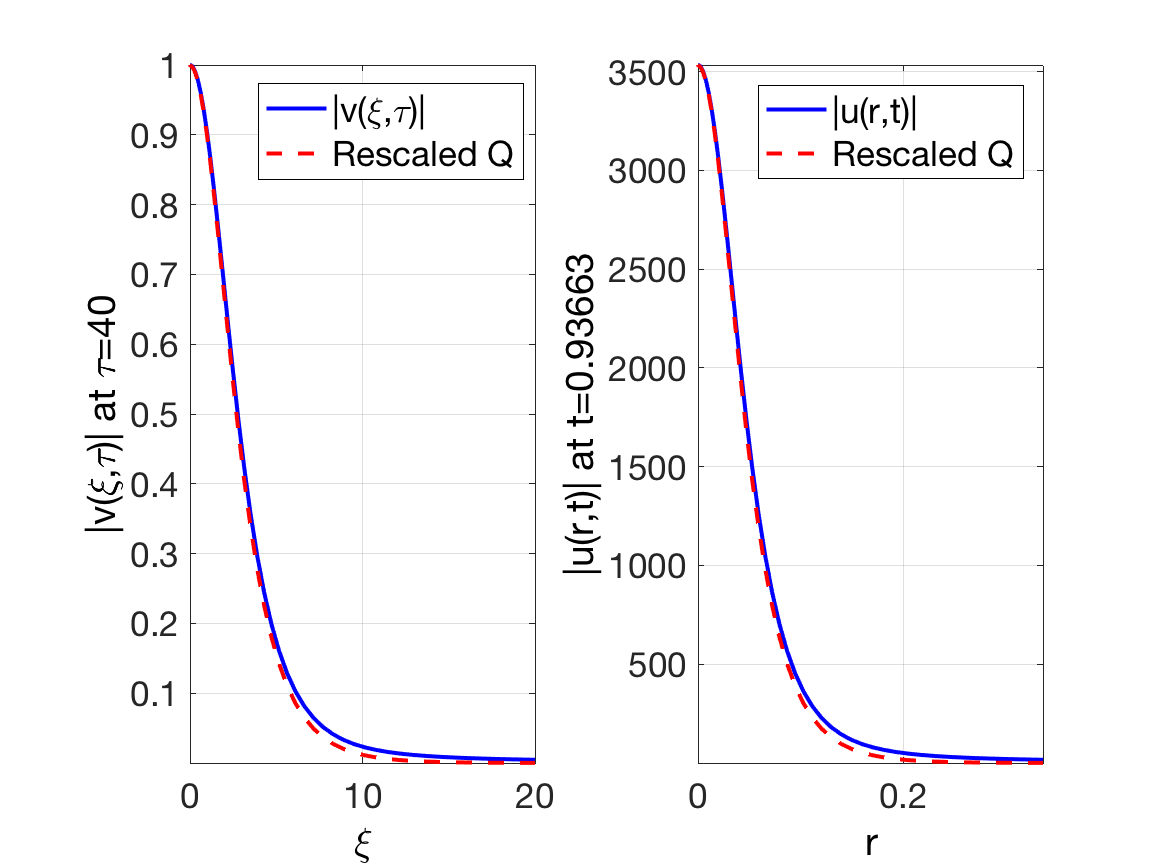}
\includegraphics[width=0.45\textwidth]{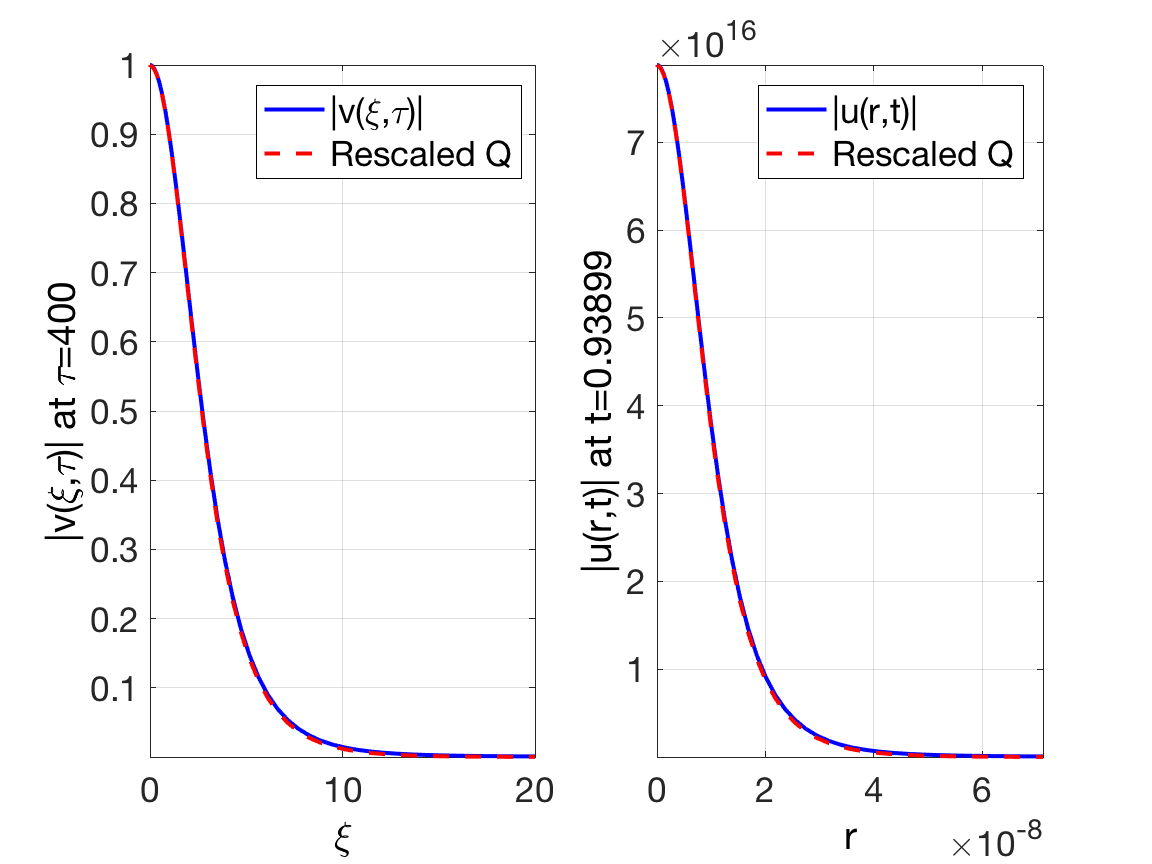}
\caption{Blow-up profile for $d=4$. The blow-up solution converges to the ground state $Q$. Here, the ``Rescaled $Q$" means $\frac{1}{L^{1/\sigma}}Q\left( \frac{r}{L} \right)$.}\label{p4d}
\end{figure}

To summarize, our numerical simulations confirm that a generic blow-up in higher dimensions (in the $L^2$-critical NLS) has the log-log regime characteristics (rate and profile), and our next goal is to justify these observations rigorously.

The analytical proof of the log-log blow-up regime (including higher dimensions) was given in several works of Merle and Rapha\"el, \cite{MR2005, MR2003, MR2004}, provided the Spectral Property 1 holds. In \cite{FMR2006}, the authors were able to check it in dimensions $d =2,3,4$ (up to some corrections in \cite{FMR2008}) and in the next section we investigate higher dimensions $d \geq 5$, while confirming the results for the low dimensions in Appendix E.

\section{Spectral Properties}\label{S:SP}

The stable ``log-log" blow-up regime for the initial data with the mass slightly above the ground state mass, $M[Q]$, negative energy and zero momentum\footnote{or the energy is adjusted for the non-zero momentum} for the 1d case was proved by Merle and Rapha\"{e}l in \cite{MR2005} and \cite{MR2003}, see also the work of Galina Perelman \cite{GP2001} in 1d.
In \cite{MR2005} and \cite{MR2003} a proof of Theorem \ref{T:Main} in higher dimensions was also given, assuming the Spectral Property 1 holds true. A major obstacle for obtaining spectral properties in higher dimensions is the lack of explicit expression for the ground state $Q$. In 2006, Fibich, Merle and Rapha\"el made an attempt to check that the Spectral Property 1 holds true with a numerically-assisted proof for the dimensions up to $d=5$ in \cite{FMR2006}.

Before we proceed to study the Spectral Property 1 in higher dimensions ($d \geq 5)$, we want to check if the methods from \cite{FMR2006} would allow higher dimensional generalization and verification. We first address the choice of the boundary conditions in \cite{FMR2006} (see also \cite{MS2011}). The solution $u(x,t)$ lives in the following space:
$$
\left\lbrace u \,\, \mathrm{is \, radial}  : \,  \int |\nabla u|^2 +|u|^2 e^{-\gamma_0|x|} <\infty \right\rbrace.
$$
This implies that $u_r(r) \rightarrow 0$ as $r\rightarrow \infty$, or equivalently, $u(r) \rightarrow C$ as $r\rightarrow \infty$. On the other hand, from the analysis of the operators $L_1$ and $L_2$, we know that $(1+r^{d-2})u\in L^{\infty}(\mathbb{R}^d)$. In $d=2$, this means that $u\in L^{\infty}(\mathbb{R}^2)$, or in other words, $u(r) \rightarrow C$ as $r\rightarrow \infty$ with $C$ not necessarily being zero. For $d\geq 3$, the last condition implies $|u(r)|<\frac{C}{1+r^{d-2}}$, and hence, $u(\infty)=0$. Thus, using the boundary conditions $u_r(L)=0$ in $d=3,4,5$ in \cite{FMR2006} makes the results less reliable, though the outcome is not affected in $d=3,4$. For clarification, we include Table \ref{comparison} as the comparison between the application of the two different boundary conditions in dimension $d=5$. We also included a comparison for dimensions $d=2,3,4$ in Appendix D.

As far as the higher dimensions $d\geq 6$, we think that one of the reasons that the methods from \cite{FMR2006} can not handle $d\geq 6$ is also the use of the boundary conditions $u_r(L)=0$ for sufficiently large $L$ (say $L=20$ or $L=30$), instead of $u(\infty)=0$. In this paper, we use the boundary condition $u(\infty)=0$. We also note that the same approach was used in \cite{MS2011} for analyzing the 3d cubic NLS equation. Our simulations show that the spectral properties holds for $d \leq 10$ for general case (not necessarily radial), and also for $d=11$ and $d=12$ in the radial case. We stopped our calculations at $d=12$, since the magnitude as well as the $L^2$ norm of the ground state became too large, and, computationally, it was not reliable to guarantee the accuracy. Moreover, the index of both operators $L_1$ and $L_2$ becomes increasingly more challenging to obtain numerically as the dimension $d$ increases beyond $12$.

\subsection{The radial case}\label{S:radial}

In this section, we show that the Spectral Property 1 holds true from $d=5$ to $d=12$, and the Spectral Property 2 holds for $d = 2 ... 12$. 
We first recall the definitions of an index of a bilinear form $B$, see, for example, \cite{FMR2006} and \cite{MS2011}:
\begin{definition}[index of a bilinear form]
The index of a bilinear form $B$ with respect to a vector space $\mathbf{V}$ is
\begin{align*}
\mathrm{ind}_V(B)=\mathrm{min}\lbrace k\in \mathbb{N} \, \vert \,  & \textit{there exists a sub-space P of codimension k such that }\\
&B_{\vert P} \textit{ is positive}.\rbrace
\end{align*}
\end{definition}
Let $B_{1}$ and $B_2$ be the bilinear forms associated with the operators $L_1$ and $L_2$, see (1.10) and (1.11) in Definition (1.1). Note that the $\mathrm{ind}_{H^1}(B_{1,2})$ equals to the number of negative eigenvalues of $L_{1,2}$. Therefore, we often refer to $\mathrm{ind}(L_{1,2})$ as the number of negative eigenvalues of $L_{1,2}$.

Since the potential term $V_{1,2}$ is smooth and decays exponentially fast, according to Theorem XIII.8 in \cite{RS1970}, which is a generalization of the Sturm Oscillation Theorem (Section XIII.7 of \cite{RS1970}), the operators $L_{1,2}$ have finite number of negative eigenvalues. Moreover, the number of the negative eigenvalues can be estimated by counting the number of zeros of the solutions to the following ODE:
\begin{align}\label{index ode1}
\begin{cases}
-\partial_{rr}U-\dfrac{d-1}{r}\partial_rU+V_{1,2}(r)U=0,\\
U(0)=1, \quad U_r(0)=0.
\end{cases}
\end{align}

The ODE \eqref{index ode1} is a standard IVP problem, which can be solved, for example, by matlab solver ``\texttt{ode45}". Note that when $r\gg 1$, the equation \eqref{index ode1} is essentially free (i.e., the potential term can be neglected), see \cite{MS2011}, and consequently, the solution must behave as
\begin{align}\label{index ode free}
U(r)\approx C_1+\dfrac{C_2}{r^{d-2}}.
\end{align}

For the $L_1$ case, we apply the following statement, which is from \cite{FMR2008}. According to this proposition, the numerical values in Table \ref{sign of L1} suggest that there will be no more intersections for $r_0 \geq 6$.

\begin{proposition}[Criterion for the positivity of $u$, \cite{FMR2008}] Let $u$ be a radial solution to
$$-u_{rr}(r)-\frac{d-1}{r}u_r-Vu=0, \quad on \,\, r>0.$$
Let $V_+=max\lbrace V,0 \rbrace$. Assume that there holds for some $r_0>1$
$$\partial_ru(r_0)u(r_0)>0$$
and
\begin{itemize}
\item for $d\geq 3$,
$$ \forall r\geq r_0, \quad V_+(r) \leq \frac{(d-2)^2}{4r^2};$$
\item for $d=2$,
$$ \forall r\geq r_0, \quad V_+(r) \leq \frac{1}{4r^2 (\log r)^2}$$
and
$$ \frac{u'(r_0)}{u(r_0)} \geq \frac{2}{r_0} \int_{r_0}^{\infty}V_+(r)rdr.$$
\end{itemize}
Then $u$ cannot vanish for $r\geq r_0$ (see Table \ref{sign of L1}).
\end{proposition}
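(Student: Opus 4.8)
The plan is to remove the first-order drift term by a Liouville (ground-state) transform, reducing the problem to a one-dimensional Schr\"odinger equation whose effective potential the hypotheses place at or below the critical Hardy threshold, and then to run a Riccati comparison against the explicit threshold solution. First I would normalize: replacing $u$ by $-u$ if needed, the assumption $\partial_r u(r_0)\,u(r_0)>0$ lets me take $u(r_0)>0$ and $u_r(r_0)>0$, so it suffices to prove that $u$ stays positive on $[r_0,\infty)$. Next I would set $u(r)=r^{-(d-1)/2}\psi(r)$; since $r^{-(d-1)/2}>0$ the zeros of $u$ and $\psi$ coincide, and a direct computation turns the equation into
\begin{equation*}
\psi_{rr} + P(r)\,\psi = 0, \qquad P(r) = V(r) - \frac{(d-1)(d-3)}{4r^2}.
\end{equation*}

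The key algebraic observation is the Hardy-constant identity $(d-1)(d-3) = (d-2)^2 - 1$. For $d\ge 3$, combining it with the hypothesis $V\le V_+\le (d-2)^2/(4r^2)$ places the effective potential below the critical level,
\begin{equation*}
P(r) = V(r) - \frac{(d-2)^2 - 1}{4r^2} \le \frac{(d-2)^2}{4r^2} - \frac{(d-2)^2 - 1}{4r^2} = \frac{1}{4r^2}.
\end{equation*}
I would then introduce the Riccati variable $v=\psi_r/\psi$, defined as long as $\psi\ne 0$, which satisfies $v_r = -v^2 - P \ge -v^2 - \tfrac{1}{4r^2}$, and compare it with the explicit solution $\bar v(r)=\tfrac{1}{2r}$ of the \emph{equality} $\bar v_r = -\bar v^2 - \tfrac{1}{4r^2}$ (the logarithmic derivative of the threshold function $\bar\psi=\sqrt r$).

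The comparison closes because the initial datum lands strictly above the barrier. From $\psi=r^{(d-1)/2}u$ one gets $v(r_0)=\frac{d-1}{2r_0}+\frac{u_r(r_0)}{u(r_0)} > \frac{d-1}{2r_0} \ge \frac{1}{r_0} > \frac{1}{2r_0}=\bar v(r_0)$, using $u_r(r_0)/u(r_0)>0$ and $d\ge 3$. A standard ODE comparison for the (locally Lipschitz) Riccati right-hand side then yields $v(r)\ge \tfrac{1}{2r}>0$ for all $r\ge r_0$: at a first contact point the difference $w=v-\bar v\ge 0$ obeys $w_r\ge -(v+\bar v)\,w$, so a Gronwall estimate forbids it from reaching zero. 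Since $v\ge \tfrac{1}{2r}>0$, the function $\psi$ is increasing and bounded away from zero from above, hence $u=r^{-(d-1)/2}\psi>0$ throughout $[r_0,\infty)$, contradicting any vanishing point. This disposes of the case $d\ge 3$ cleanly.

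The hard part will be $d=2$, which is genuinely \emph{critical}: the identity gives $(d-1)(d-3)=-1$, so the reduced equation $\psi_{rr}+\big(V+\tfrac{1}{4r^2}\big)\psi=0$ sits exactly at the Hardy threshold and the plain $\sqrt r$ barrier no longer dominates. This is precisely why the two extra hypotheses appear. For the Riccati route one would replace $\bar\psi=\sqrt r$ by the next borderline solution $\bar\psi=\sqrt{r\log r}$ of the log-corrected equation $\bar\psi_{rr}+\big(\tfrac{1}{4r^2}+\tfrac{1}{4r^2(\log r)^2}\big)\bar\psi=0$, whose logarithmic derivative is $\bar v=\tfrac{1}{2r}+\tfrac{1}{2r\log r}$; the sharper bound $V_+\le \tfrac{1}{4r^2(\log r)^2}$ is exactly what keeps $P$ below this refined threshold. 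The genuine obstacle is the initial inequality: unlike $d\ge 3$, the condition $u_r(r_0)u(r_0)>0$ alone does not force $v(r_0)\ge\bar v(r_0)$, and one must instead argue directly from the divergence form $(r u_r)_r=-rVu$. Integrating from $r_0$ to a putative first critical point $\rho$ (where $u_r(\rho)=0$) gives $r_0 u_r(r_0)=\int_{r_0}^{\rho} rVu\,dr\le u(\rho)\int_{r_0}^{\infty} r V_+\,dr$, and the integral hypothesis $\frac{u_r(r_0)}{u(r_0)}\ge \frac{2}{r_0}\int_{r_0}^{\infty}V_+ r\,dr$ (with its crucial factor $2$) then yields a contradiction, \emph{provided} one first controls the possible logarithmic growth $u(\rho)/u(r_0)$ on $[r_0,\rho]$. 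Establishing that growth control, which is where the $\log$-refined potential bound re-enters, is the delicate step and the reason the criterion is stated separately in dimension two.
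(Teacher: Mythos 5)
A preliminary remark on the comparison itself: the paper contains no proof of this proposition --- it is imported from \cite{FMR2008} (an unpublished erratum, cited as personal communication) and is only \emph{applied}, with $r_0=6$ and the numerics of Table \ref{sign of L1}, in dimensions $5\le d\le 12$. So your attempt must be judged on its own merits rather than against a proof in the text. On those merits, your $d\ge 3$ argument is complete and correct: the substitution $u=r^{-(d-1)/2}\psi$ does give $\psi_{rr}+P\psi=0$ with $P=V-\frac{(d-1)(d-3)}{4r^2}$; the identity $(d-1)(d-3)=(d-2)^2-1$ together with $V\le V_+\le\frac{(d-2)^2}{4r^2}$ yields $P\le\frac{1}{4r^2}$; the barrier $\bar v=\frac{1}{2r}$ solves the limiting Riccati equation exactly; the initial comparison $v(r_0)=\frac{d-1}{2r_0}+\frac{u_r(r_0)}{u(r_0)}>\frac{1}{r_0}>\frac{1}{2r_0}$ is where $d\ge3$ enters; and the Gronwall step is legitimate on any interval where $\psi\neq 0$, so $v>\frac{1}{2r}>0$ forces $\psi$ to be increasing and rules out a first zero (which would force $v\to-\infty$). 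This covers every instance in which the present paper actually invokes the proposition.

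The genuine gap is the $d=2$ clause, which is part of the statement but for which you offer a program rather than a proof --- and the program, as sketched, does not close. Your own computation shows it: integrating $(ru_r)_r=-rVu$ to a putative first critical point $\rho$ and invoking $\frac{u_r(r_0)}{u(r_0)}\ge\frac{2}{r_0}\int_{r_0}^{\infty}V_+r\,dr$ yields only $u(\rho)\ge 2u(r_0)$, i.e.\ ``$u$ must double before it can turn around,'' which contradicts nothing by itself. The missing ``growth control'' you defer is not a routine patch, for two reasons. First, as stated it is false: the hypotheses put no constraint on the negative part of $V$, and where $V<0$ the solution can grow exponentially (e.g.\ $V\equiv -c$ on an interval, with $V_+\equiv 0$ there), so $u(\rho)/u(r_0)$ is not logarithmically controlled; one must first reduce to $V=V_+\ge 0$, e.g.\ by a Riccati/Gronwall comparison of the solutions with potentials $V$ and $V_+$ and the same Cauchy data at $r_0$, which shows that replacing $V$ by $V_+$ only decreases $u$. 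Second, even after that reduction (where, in the Emden--Fowler variable $w(t)=u(e^t)$, $w''+e^{2t}Vw=0$ with $0\le e^{2t}V_+\le\frac{1}{4t^2}$, and $w$ is concave while positive), the naive estimates give only linear-in-$t$ growth up to the critical point, not the bound $u(\rho)<2u(r_0)$ needed to contradict the doubling; and the pointwise bound alone can never suffice in this critical case, since the threshold equation $w''+\frac{1}{4t^2}w=0$ has solutions $\sqrt{t}\,(\log t-c)$ that do vanish, while the hypothesis $u_r(r_0)u(r_0)>0$ is strictly weaker than what your log-corrected barrier $\bar v=\frac{1}{2r}+\frac{1}{2r\log r}$ requires at $r_0$. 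So the mechanism by which the integral condition (with its factor $2$) and the pointwise bound combine is precisely what is absent, and until it is supplied the $d=2$ part of the proposition remains unproved.
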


\begin{table}[ht]
\begin{tabular}{|c|l|l|l|l|}
\hline
$d$&$5$&$6$&$7$&$8$\\
\hline
$\partial_r u(r_0)u(r_0)$&$1.4009e-05$&$4.7096e-05$&$1.9375e-05$&$6.2832e-06$\\
\hline
$-V_1(r_0)-\frac{d-2}{4r_0}$&$-0.31166$&$-0.53961$&$-0.82591$&$-1.1703$\\
\hline
$d$&$9$&$10$&$11$&$12$\\
\hline
$\partial_r u(r_0)u(r0)$&$1.8316e-06$&$4.802e-07$&$1.0135e-07$&$6.9127e-09$\\
\hline
$-V_1(r_0)-\frac{d-2}{4r_0}$&$-1.5731$&$-2.0343$&$-2.5543$&$-3.1335$\\
\hline
\end{tabular}
\linebreak
\linebreak
\caption{Values of the quantities from Proposition 3.1 at $r_0=6$.}
\label{sign of L1}
\end{table}

For the case of $L_2$, the theorem is not applicable. We adopt the argument from \cite{MS2011}: we first notice that the equation \eqref{index ode1} converges to the free equation \eqref{index ode free}. Then, we choose a large enough interval (say $L=100$) to ensure that the solution goes to a constant. The constant can be found from
\begin{align}\label{index constant}
\begin{cases}
\frac{C_1}{r_i^{d-2}}+C_2=U_i,\\
\frac{C_1}{r_{i-1}^{d-2}}+C_2=U_{i-1},
\end{cases}
\end{align}
where $r_i$ and $U_i$ are discretized points of $r$ and $U(r)$. Once the constant $C_2$ starts to stabilize, we conclude that the solution enters the free region and no more ``zeros" will occur.

We also point out that one needs to be careful in the numerical calculation of the potential $V_1$ or $V_2$, since when $d\geq 5$, the term
$$
V_2=\frac{2}{d}Q^{\frac{4}{d}-1}rQ_r
$$
generates the negative power of $Q$. This may fail to describe the exponential decay property, especially, when $d\geq 8$. An alternative way is needed to calculate the potential $V_1$ and $V_2$. We provide a new approach for that and discuss the details in the Appendix C.

Our numerical solutions of the equation (\ref{index ode1}) are given in Figure \ref{index k0_1} as an example for the case $d=5$, there $U$ stands for the solution to $L_1$ and $Z$ for $L_2$. {Solutions to other dimensions of the equation \eqref{index ode1} are similar.} We conclude the following statement.

\begin{proposition}[indices of $B_{1,2}$]\label{P:indexL}
For $d=5$ to $d=12$, the indices of $L_{1,2}$ in the radial case are
$$\mathrm{ind}(L_1)=2, \quad \mathrm{ind}(L_2)=1$$
Therefore,
$$\mathrm{ind}_{H_r^1}(B_1)=2, \quad \mathrm{ind}_{H_r^1}(B_2)=1$$
\end{proposition}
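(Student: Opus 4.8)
The plan is to reduce the computation of each index to a zero-counting problem via Sturm oscillation theory, and then to carry out the count numerically in each dimension $d = 5, \ldots, 12$, using the available analytic criteria to rule out any zeros outside the computational window. Since the remark preceding the statement identifies $\mathrm{ind}_{H^1_r}(B_{1,2})$ with the number of negative eigenvalues of $L_{1,2}$, and since Theorem XIII.8 of \cite{RS1970} guarantees that the exponentially decaying potentials $V_{1,2}$ produce only finitely many negative eigenvalues, it suffices to determine, for each operator and each dimension, exactly how many eigenvalues lie strictly below zero. By the generalized Sturm oscillation theorem, this number equals the number of zeros on $(0,\infty)$ of the zero-energy solution $U$ of the radial IVP \eqref{index ode1} (the radial weight is harmless: $U$ and its one-dimensional Schr\"odinger transform differ by a nonvanishing factor and share the same zeros for $r>0$). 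Thus the whole proposition amounts to showing that the solution of \eqref{index ode1} has exactly two zeros when $V = V_1$ and exactly one zero when $V = V_2$, for every $d$ in the stated range.

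For the operator $L_1$ I would integrate \eqref{index ode1} numerically (e.g. with \texttt{ode45}), count the two zeros that appear on a bounded interval, and then invoke the positivity criterion of \cite{FMR2008} stated above to certify that no further zeros occur. Concretely, at the point $r_0 = 6$ one checks the two hypotheses of that proposition: that $\partial_r U(r_0)\,U(r_0) > 0$, so that $U$ is moving away from the axis, and that $V_1(r) \le (d-2)^2/(4r^2)$ for all $r \ge r_0$. Because $V_1$ decays exponentially (the factor $Q^{4/d-1}\,rQ_r$ behaves like $e^{-4r/d}$ up to polynomial corrections) while the comparison barrier $(d-2)^2/(4r^2)$ decays only polynomially, the pointwise inequality at $r_0$ together with the decay of $V_1$ secures it for all $r \ge r_0$; the relevant values are exactly those tabulated in Table \ref{sign of L1}. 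The criterion then forbids any zero of $U$ beyond $r_0 = 6$, so the count of two zeros is complete and $\mathrm{ind}(L_1) = 2$.

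For the operator $L_2$ the positivity criterion does not apply (the required barrier bound on $V_2$ fails), so I would instead follow the asymptotic-matching argument of \cite{MS2011}. Integrating \eqref{index ode1} on a sufficiently large interval (say $L = 100$) drives the solution into the region where the potential is negligible and $U$ takes the free form \eqref{index ode free}, namely $U(r) \approx C_1 + C_2\, r^{-(d-2)}$. From two adjacent grid values $(r_{i-1},U_{i-1})$ and $(r_i,U_i)$ I recover $C_1,C_2$ by solving the linear system \eqref{index constant}, and once the extracted constant $C_2$ stabilizes I conclude that the solution has entered the free, monotone regime in which no further zeros can form. Counting the single zero that precedes this regime gives $\mathrm{ind}(L_2) = 1$. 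Passing from the operator indices to the form indices via the identification $\mathrm{ind}_{H^1_r}(B_{1,2}) = \mathrm{ind}(L_{1,2})$ then yields the two displayed equalities.

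The main obstacle I anticipate is not the zero-counting itself but the reliability of the inputs to it. First, one must compute the potentials $V_{1,2}$ accurately; for $V_2$ in high dimensions the factor $Q^{4/d - 1}$ is a negative power of an exponentially small quantity, which destroys the decay numerically unless the evaluation is reorganized (this is the motivation for the alternative computation of $V_{1,2}$ deferred to Appendix C). Second, the conclusion is only as rigorous as the \emph{no more zeros} step: for $L_1$ this is fully certified by the analytic criterion together with Table \ref{sign of L1}, but for $L_2$ the stabilization of $C_2$ is a numerical judgment, so the delicate point is to integrate far enough, and with enough accuracy, that the onset of the free regime is unambiguous and the eigenvalue count is beyond doubt.
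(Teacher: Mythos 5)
Your proposal matches the paper's own argument essentially step for step: the reduction of the index to a zero count via the Sturm oscillation theory of \cite{RS1970}, the numerical integration of \eqref{index ode1}, the use of the positivity criterion of \cite{FMR2008} at $r_0=6$ (Table \ref{sign of L1}) to certify no further zeros for $L_1$, and the \cite{MS2011}-style tail-matching with the stabilization of $C_2$ in \eqref{index constant} for $L_2$, together with the careful reformulation of the potentials $V_{1,2}$ deferred to Appendix C. The proof is correct and follows the same route as the paper.
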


\begin{figure}
\includegraphics[width=.49\textwidth]{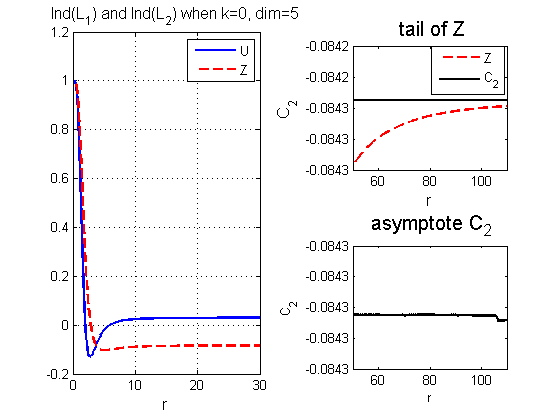}
\caption{\label{index k0_1} Solutions of \eqref{index ode1} in $d=5$. Numerical justification of Proposition \ref{P:indexL}. The blue line shows the behavior of $U$ from $L_1U=0$ and the green line shows the behavior of $Z$ from $L_2Z=0$. We also show the behavior of the tail of $Z$, and the constant $C_2$ it approaches to  as $r\rightarrow \infty$, see \eqref{index constant}.}
\end{figure}

The following property shows that the indices of the bilinear forms are stable under the perturbations. Thus, it is sufficient to check the terms $B_{1,2}(u,u)>0$ instead of $B_{1,2}(u,u)>\delta_0 \int |u|^2e^{-|\mathbf{x}|}$ for some sufficiently small $\delta_0$, given in Definition \ref{D:SP}.

\begin{proposition}[\cite{FMR2006}, \cite{MS2011}]
For the operators $L_{1,2}$ (from the Spectral Property), there exists a universal constant $\delta_0>0$, sufficiently small, such that for the perturbed operators
$$
\bar{L}_{1,2}=L_{1,2}-\delta_0 \, e^{-|x|},
$$
the associated bilinear forms are stable, i.e.,
$$
\mathrm{ind}_{H^1}(\bar{B}_{1,2})=\mathrm{ind}_{H^1}(B_{1,2}).
$$
\end{proposition}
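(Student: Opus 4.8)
The plan is to first identify $\mathrm{ind}_{H^1}(B_{1,2})$ with the number $n_{1,2}$ of strictly negative eigenvalues of $L_{1,2}$, as recorded in the discussion preceding Proposition~\ref{P:indexL}, and then to show that this count is unchanged when $L_{1,2}$ is replaced by $\bar{L}_{1,2}=L_{1,2}-\delta_0 e^{-|x|}$ for $\delta_0>0$ small. First I would pin down the essential spectrum: both $V_{1,2}$ and $e^{-|x|}$ are bounded and decay to zero at infinity, so the associated multiplication operators are relatively compact with respect to $-\Delta$, and Weyl's theorem gives $\sigma_{ess}(L_{1,2})=\sigma_{ess}(\bar{L}_{1,2})=\sigma_{ess}(-\Delta)=[0,\infty)$. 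Thus the perturbation leaves the essential spectrum pinned at $[0,\infty)$ and can move only the finitely many negative eigenvalues.

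The easy half of the equality is a monotonicity statement. Since $e^{-|x|}>0$ we have $\bar{B}_{1,2}(u,u)=B_{1,2}(u,u)-\delta_0\langle e^{-|x|}u,u\rangle\le B_{1,2}(u,u)$, i.e.\ $\bar{L}_{1,2}\le L_{1,2}$ in the form sense. By the min--max principle each min--max value is nonincreasing and continuous (in fact real-analytic, since $\delta_0 e^{-|x|}$ is a bounded analytic family) in $\delta_0$, so the $n_{1,2}$ negative eigenvalues of $L_{1,2}$ deform into eigenvalues of $\bar{L}_{1,2}$ that remain strictly negative. This already gives $\mathrm{ind}_{H^1}(\bar{B}_{1,2})\ge\mathrm{ind}_{H^1}(B_{1,2})$, so the whole content of the statement is the reverse inequality: that no \emph{new} negative eigenvalue is created for small $\delta_0$.

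To bound the count from above I would split the negative axis at a level $-c$ inside the spectral gap just below the continuum. Since the largest negative eigenvalue $\lambda_{\max}$ of $L_{1,2}$ is strictly negative, any $c\in(0,|\lambda_{\max}|)$ gives $(-c,0)\cap\sigma(L_{1,2})=\varnothing$. The number of eigenvalues in $(-\infty,-c]$ is then stable for small $\delta_0$, because the Riesz projection along a contour encircling $(-\infty,-c]$ varies continuously in operator norm with $\delta_0$ as long as $-c$ stays in the resolvent set. Hence it only remains to rule out eigenvalues of $\bar{L}_{1,2}$ emerging into $(-c,0)$, that is, bound states pulled out of the continuum at the threshold $0$.

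This last point is the main obstacle, and it is a genuine threshold question rather than a routine gap--perturbation argument: because $\sigma_{ess}$ reaches down to $0$ there is no uniform spectral gap below the continuum, which is precisely why a weighted quantity $\int|u|^2e^{-|x|}$, rather than a clean $H^1$ gap, appears in \eqref{E:Bpositive}. The free-equation analysis \eqref{index ode free} shows that the zero-energy solution behaves like $C_1+C_2\,r^{-(d-2)}$, so $0$ is at most a threshold resonance and never an $L^2$ eigenvalue in the dimensions considered. I would then close the argument with the Birman--Schwinger principle: new eigenvalues of $\bar{L}_{1,2}$ in $(-c,0)$ correspond to eigenvalue $1$ of $\delta_0\,(e^{-|x|})^{1/2}(L_{1,2}-E)^{-1}(e^{-|x|})^{1/2}$ for some $E\in(-c,0)$, and for $d\ge 5$ (where the threshold is regular, so the weighted resolvent stays bounded up to $E=0$) this operator has norm $O(\delta_0)<1$ once $\delta_0$ is small, forbidding any new state. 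Combined with the lower bound this yields $\mathrm{ind}_{H^1}(\bar{B}_{1,2})=\mathrm{ind}_{H^1}(B_{1,2})$; the payoff, used in the sequel, is that positivity of $B_{1,2}$ on the codimension-$n_{1,2}$ orthogonality subspace then transfers to positivity of $\bar{B}_{1,2}$ on the same subspace, which is exactly the weighted bound $B_{1,2}(u,u)>\delta_0\int|u|^2e^{-|x|}$, reducing the quantitative Spectral Property to the soft positivity that the numerics verify.
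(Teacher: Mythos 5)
The paper itself offers no proof of this proposition: it is quoted from \cite{FMR2006} and \cite{MS2011}, so your argument can only be judged on its own merits and against the route natural to this paper's framework.

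Your skeleton is the right one: identify $\mathrm{ind}_{H^1}(B_{1,2})$ with the number of negative eigenvalues, pin the essential spectrum at $[0,\infty)$ by Weyl, get $\mathrm{ind}_{H^1}(\bar{B}_{1,2})\ge \mathrm{ind}_{H^1}(B_{1,2})$ from form monotonicity and min--max, stabilize the eigenvalue count in $(-\infty,-c]$ by a Riesz projection, and reduce everything to forbidding eigenvalues that emerge from the threshold $0$. The genuine gap is in how you dispatch that last step, and it occurs precisely in the dimensions at stake. From \eqref{index ode free} you infer that $0$ is ``at most a threshold resonance and never an $L^2$ eigenvalue.'' For $d\ge 5$ this dichotomy is inverted: since $\int_1^\infty \left(r^{-(d-2)}\right)^2 r^{d-1}\,dr=\int_1^\infty r^{3-d}\,dr<\infty$ exactly when $d\ge 5$, a regular zero-energy solution whose constant term $C_1$ in \eqref{index ode free} vanishes would be a genuine $L^2$ eigenfunction (only in $d\le 4$ is such a decaying solution merely a resonance). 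So absence of a zero eigenvalue is not a consequence of the asymptotic form; it is an additional nontrivial input, equivalent to the nonvanishing of the limiting constant of the regular solution --- exactly the numerical fact tracked via \eqref{index constant} and underlying Proposition \ref{P:indexL}. Your parenthetical ``where the threshold is regular'' therefore assumes what must be checked.

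Two further points. First, even granting threshold regularity, the bound $\sup_{E\in(-c,0)}\left\|e^{-|x|/2}(L_{1,2}-E)^{-1}e^{-|x|/2}\right\|<\infty$ that your Birman--Schwinger step needs is not free: the unweighted resolvent norm diverges like $1/|E|$ as $E\uparrow 0$, and the weighted statement is a Jensen--Kato-type threshold limiting-absorption theorem that must be proved or cited. Second, your closing claim that positivity of $B_{1,2}$ on the orthogonality subspace ``transfers'' to $\bar{B}_{1,2}$ is not a formal implication, since $\bar{B}_{1,2}\le B_{1,2}$; what the index equality actually buys is that the counting argument of Proposition \ref{P:B-space} can be rerun for $\bar{B}_{1,2}$, whose sign data (the $K$'s and $J$'s) persist for small $\delta_0$ by continuity. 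If you repair the threshold step --- take $C_1\neq 0$ as numerically verified input and invoke a threshold resolvent bound --- your operator-theoretic proof closes. A route more native to this paper avoids resolvents entirely: by the Sturm oscillation characterization already used here (sector by sector in spherical harmonics for the non-radial part), the index of $\bar{L}_{1,2}$ equals the number of zeros of the regular solution of $\bar{L}_{1,2}U=0$; since $\delta_0 e^{-r}$ is a small, exponentially decaying perturbation of the potential, continuous dependence of ODE solutions on parameters together with the same far-field analysis \eqref{index ode free} --- again requiring $C_1\neq 0$ --- leaves that zero count unchanged for small $\delta_0$.
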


We return to the discussion of the proof of the spectral property, which involves solving the BVP problem $L_{1,2}U=f$. While the numerical calculations suggest that $L_{1,2}$ are invertible, the proof of the invertibility of the operators $L_{1,2}$ in \cite{FMR2006} and \cite{MS2011} works in straightforward adaptation to our cases.

\begin{proposition}[Invertibility of $L_{1,2}$]\label{P:Inv-radial}
Let $d\in \lbrace5,6,\cdots, 12 \rbrace$ and $f\in C^0_{loc}(\mathbb{R}^d)$ be radially symmetric with $|f(r)|\leq e^{-Cr}$. Then there exists a unique radial solution to
$$
L_{1,2}u=f \quad with \, (1+r^{d-2})u \in L^{\infty}.
$$
\end{proposition}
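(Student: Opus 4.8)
The plan is to prove existence and uniqueness separately, exploiting the radial reduction of $L_{1,2}$ to a second-order ODE and the exponential decay of both the potential $V_{1,2}$ and the forcing $f$. First I would write the equation in radial coordinates as
\begin{equation}\label{E:radialODE}
-u_{rr}-\frac{d-1}{r}u_r+V_{1,2}(r)\,u=f(r),\qquad u_r(0)=0,
\end{equation}
and recall from Proposition \ref{P:indexL} and the preceding discussion that for $r\gg 1$ the potential is negligible, so the homogeneous equation $L_{1,2}u=0$ has two linearly independent radial solutions behaving like the two harmonics of the free Laplacian, namely a bounded one $\sim C_1$ and a decaying one $\sim C_2 r^{-(d-2)}$, matching \eqref{index ode free}. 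The decay constraint $(1+r^{d-2})u\in L^\infty$ is exactly the condition that selects the decaying branch at infinity, so a solution of \eqref{E:radialODE} meeting the stated growth bound must lie in a one-parameter family once the regular condition at the origin is imposed; the point of the proposition is that the inhomogeneous term together with the decay requirement pins down this parameter uniquely.

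For \textbf{existence} I would construct the solution by the method of variation of parameters built on a fundamental system $\{u_1,u_2\}$ of the homogeneous equation, where $u_1$ is the solution regular at $r=0$ (with $u_1(0)=1,\ u_1'(0)=0$) and $u_2$ is the subdominant solution decaying like $r^{-(d-2)}$ at infinity. Using the Green's function assembled from $u_1,u_2$ and the Wronskian (which for the radial operator carries the weight $r^{d-1}$), one writes a particular solution as an integral against $f$; the exponential bound $|f(r)|\le e^{-Cr}$ guarantees absolute convergence of the relevant integrals and, crucially, that the constructed $u$ inherits the $r^{-(d-2)}$ decay at infinity rather than picking up a constant tail. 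One then adds the appropriate multiple of $u_1$ to enforce $u_r(0)=0$ and regularity at the origin, and checks that the $C^0_{loc}$ regularity of $f$ yields $u\in C^2$ locally by standard ODE theory.

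For \textbf{uniqueness}, suppose $u$ solves the homogeneous equation $L_{1,2}u=0$ with $(1+r^{d-2})u\in L^\infty$; I must show $u\equiv 0$. Regularity at the origin forces $u$ to be proportional to $u_1$, while the decay bound forces $u$ to be proportional to the subdominant solution $u_2$ at infinity. If $u_1$ and $u_2$ are genuinely independent (nonzero Wronskian), these two requirements are incompatible unless $u\equiv 0$; equivalently, $0$ is not an $L^2$-eigenvalue of $L_{1,2}$ in the relevant weighted sense. This is where the main obstacle lies: one must rule out that the regular solution $u_1$ happens itself to decay like $r^{-(d-2)}$, i.e., that the homogeneous problem has a nontrivial solution satisfying both boundary behaviors simultaneously. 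I expect to resolve this exactly as the index computation does — the numerics of Proposition \ref{P:indexL} show that the regular solution $U$ of $L_{1,2}U=0$ does \emph{not} decay but tends to a nonzero constant (the stabilized $C_2$ of \eqref{index constant}), so $0$ is not in the point spectrum and the Wronskian is nonvanishing. Adapting the arguments of \cite{FMR2006} and \cite{MS2011}, which establish precisely this spectral non-degeneracy in lower dimensions, to the dimensions $5\le d\le 12$ via the same decay analysis of \eqref{index ode free} then completes the uniqueness and hence the invertibility.
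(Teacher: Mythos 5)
Your proposal is correct and is essentially the argument the paper itself relies on: the paper offers no proof of Proposition \ref{P:Inv-radial} beyond deferring to the ODE arguments of \cite{FMR2006} and \cite{MS2011} (``straightforward adaptation''), and your reconstruction --- the regular solution $u_1$ at the origin and the subdominant solution $u_2\sim r^{-(d-2)}$ at infinity as a fundamental system, a Green's function built on them for existence, Wronskian non-vanishing for uniqueness, with the kernel-triviality (no zero resonance) supplied by the numerically observed nonzero limiting constant of the regular solution from Proposition \ref{P:indexL} and \eqref{index ode free}--\eqref{index constant} --- is precisely that adaptation. One cosmetic slip: the Green's-function solution with base points $0$ and $\infty$ already satisfies $u_r(0)=0$ and regularity at the origin, so the ``appropriate multiple of $u_1$'' you add at the end is necessarily zero; adding any nonzero multiple would reinstate the constant tail (since $u_1$ tends to a nonzero constant) and violate $(1+r^{d-2})u\in L^{\infty}$.
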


The following definition lists the numerical values of the bilinear forms which we need in the proof of the Spectral Property. It involves the computation of the BVP problem $L_{1,2}U=f$. We take $u_r(0)=0$ as the left boundary condition, since $u(r)$ is radially symmetric. We construct the artificial boundary condition $u(L)+\dfrac{L}{d-2}u_r(L)=0$ to approximate the boundary condition $u(\infty)=0$ (see details in Appendix B, also the reader can refer to \cite{MS2011}).

\begin{definition}[numerical representation of the bilinear form]\label{D:BF}
Let the operator $L_{i}u=f$, $i=1,2$ solve the linear BVP
\begin{align}\label{bvp eqn}
\begin{cases}
L_{1,2}u=f, \quad i=1,2 \\
u_r(0)=0, \quad u(\infty)=0.
\end{cases}
\end{align}
Define
\begin{align}
& L_1U_1=Q, \quad L_1U_2=Q_1;\\
& L_2Z_1=Q_1, \quad L_1Z_2=Q_2;
\end{align}
and denote the following constants as values of the bilinear forms
\begin{align}
& K_{11}=B_1(U_1,U_1), \quad K_{22}=B_1(U_2,U_2), \quad K_{12}=B_1(U_1,U_2),\quad K_{21}=B_1(U_2,U_1); \label{B1-K}\\
& J_{11}=B_2(Z_1,Z_1), \quad J_{22}=B_2(Z_2,Z_2), \quad J_{12}=B_2(Z_1,Z_2), \quad J_{21}=B_2(Z_2,Z_1). \label{B2-J}
\end{align}
We also define the determinants of matrices $K$ and $J$ by
\begin{equation}\label{E:KK-JJ}
KK=K_{11}K_{22}-K_{12}K_{21} \quad \mbox{and} \quad JJ =J_{11}J_{22}-J_{12}J_{21}.
\end{equation}
\end{definition}
\bigskip

We list the values of $K_{ij}$ and $J_{ij}$ from Definition \ref{D:BF} for dimensions $5 \leq d \leq 12$ in Table \ref{K0} and Table \ref{J0}, respectively.

\begin{table}[ht]
\begin{tabular}{|c|l|l|l|l|l|}
\hline
$d$&$K_{11}$&$K_{12}$&$K_{22}$&$KK$&$|\frac{K_{12}}{K_{21}}-1|$\\
\hline
$5$&$-42.3114$&$4.515$&$-490.2964$&$20724.7444$&$5e-10$\\
\hline
$6$&$-279.0336$&$23.4744$&$-2400.2078$&$669187.6405$&$4e-12$\\
\hline
$7$&$-2199.9811$&$294.6847$&$-15415.6211$&$33827236.411$&$8e-11$\\
\hline
$8$&$-20133.2095$&$4529.9623$&$-119781.6837$&$2391069177$&$4e-11$\\
\hline
$9$&$-209271.9449$&$73555.7296$&$-1073544.6307$&$219252327433$&$5e-11$\\
\hline
$10$&$-2428264.8856$&$1256378.6856$&$-10704624.457$&$24415176279909$&$1e-9$\\
\hline
$11$&$-30987268.4148$&$22770115.4386$&$-114540537.6$&$3e16$&$5e-10$\\
\hline
$12$&$-428716358.9148$&$440685490.5273$&$-1248715577$&$3e18$&$1e-7$\\
\hline
\end{tabular}
\linebreak
\linebreak
\caption{Values of the bilinear form $B_1$ from \eqref{B1-K} and \eqref{E:KK-JJ} via $K_i$'s.} %$K^{(0)}_i$'s. $KK:=K_{11}K_{22}-K_{12}K_{21}$.}
\label{K0}
\end{table}

\begin{table}[ht]
\begin{tabular}{|c|l|l|l|l|l|}
\hline
$d$&$J_{11}$&$J_{12}$&$J_{22}$&$JJ$&$|\frac{J_{12}}{J_{21}}-1|$\\
\hline
$5$&$80.6653$&$-483.0279$&$1388.5127$&$-121311.0934$&$1e-10$\\
\hline
$6$&$611.2497$&$-3969.9773$&$12386.6137$&$-8189405.1473$&$2e-10$\\
\hline
$7$&$5608.4168$&$-39096.1569$&$134642.4625$&$-773378434.5754$&$1e-9$\\
\hline
$8$&$60626.5104$&$-449322.1199$&$1721581.0897$&$-97516913089.4223$&$1e-9$\\
\hline
$9$&$758310.8674$&$-5924273.2959$&$25329030.7313$&$-15889734594779.6$&$1e-9$\\
\hline
$10$&$10852351.2386$&$-88679869.0076$&$423309818.7944$&$-3270212385731735$&$1e-8$\\
\hline
$11$&$176804567.641$&$-1500253955.9189$&$7990965776.4667$&$-8e17$&$1e-7$\\
\hline
$12$&$3286852523.1216$&$-28762231494.6882$&$171112628940.871$&$-3e20$&$1e-6$\\
\hline
\end{tabular}
\linebreak
\linebreak
\caption{Evaluation of the bilinear form $B_2$ from \eqref{B2-J} and \eqref{E:KK-JJ} via $J_i$'s.} %$JJ:=J_{11}J_{22}-J_{12}J_{21}$}
\label{J0}
\end{table}

We use two methods to solve the equation (\ref{bvp eqn}): one is the Chebyshev collocation method; the other method is the matlab solver ``\texttt{bvp4c}". These two methods lead to basically the same results, for a comparison in dimension $d=5$ see Table \ref{compare} (values of $K_i$'s) and Table \ref{comparison} (values of $J_i$'s). Since $L_1$ and $L_2$ are self-adjoint operators, the difference $|\frac{K_{12}}{K_{21}}-1|$, and the corresponding one for $J$'s, is one way to check the numerical consistency, we list those values in the last columns of the Tables \ref{K0}-\ref{J0}; in Tables \ref{compare}-\ref{comparison} we list the differences $|K_{12} -K_{21}|$ and $|J_{12} -J_{21}|$, correspondingly. 
%a similar one for $J$'s.
\begin{table}[h]
\begin{tabular}{|c|l|l|l|l|l|}
\hline
$d=5$&$K_{11}$&$K_{12}$&$K_{22}$&$KK$&$|K_{12}-K_{21}|$\\
\hline
``cheby"&$-42.3114$&$4.515$&$-490.2964$&$20724.7444$&$5e-10$\\
\hline
``\texttt{bvp4c}"&$-42.3114$&$4.515$&$-490.2966$&$20724.7515$&$3e-05$\\
\hline
\end{tabular}
\linebreak
\linebreak
\caption{Comparison of values of $K_i$'s in $d=5$ between Chebyshev-collocation method and ``\texttt{bvp4c}". The boundary condition used here in both methods is $u(\infty)=0$. }
\label{compare}
\end{table}

We note that we use $u(\infty) = 0$ boundary condition when computing the values of $K$'s in Table \ref{K0} as well as in Table \ref{compare}; moreover, in Table \ref{compare} we provide results obtained by two methods for comparison purposes.

In the Table \ref{comparison} we show two different boundary conditions: $u(\infty) = 0$ (first row with Chebyshev collocation method)
%, though  ``\texttt{bvp4c}" produces the same results
and $u'(L)=0$ with Chebyshev collocation methods (second row) and with ``\texttt{bvp4c}" from matlab (third row). This is for comparison of the results with lower dimensions, since it makes a difference in dimension 4 (though it does not influence the signs, thus, the conclusion of the Spectral Property), and it completely changes the results in dimension 5 (and higher).
Therefore, starting from the dimension 5 and higher, we only use boundary condition as in the first row of Table \ref{compare}.
\begin{table}[h]
\begin{tabular}{|c|c|c|c|c|c|}
\hline
B.C.&$J_{11}$&$J_{22}$&$J_{12}$&$JJ$&$|J_{12}-J_{21}|$\\
\hline
$u(\infty)=0$ by ``cheby"&$80.6653$&$1388.5127$&$-483.0279$&$-121311.0935$&$7e-8$\\
\hline
$u'(L)=0$ by ``cheby"&$-114.8176$&$-4100.538$&$552.837$&$165185.398$&$2e-4$\\
\hline
$u'(L)=0$ by ``\texttt{bvp4c}"&$-114.8176$&$-4100.5315$&$552.8372$&$165185.2598$&$2e-3$\\
\hline
\end{tabular}
\linebreak
\linebreak
\caption{Comparison of values of $J_i$'s for $d=5$. Note that the boundary conditions affect the final results.}
\label{comparison}
\end{table}

With these bilinear forms calculated, we reach the following proposition:
\begin{proposition}\label{P:B-space}
The bilinear form $B_1(f,f)$ is coercive on the space $\mathcal{U}=\lbrace Q,Q_1\rbrace^{\perp}$ and $\ \mathcal{U} \subseteq H^1_r$, where $H^1_r$ stands for the radial functions in $H^1$.
The bilinear form $B_2(g,g)$ is coercive on the space $\mathcal{V}=\lbrace Q_1,Q_2\rbrace^{\perp}$ and $\mathcal{V} \subseteq H^1_r$.
Therefore, the Spectral Property 1 holds in the functional space $\mathcal{U} \times \mathcal{V} \subseteq H^1_r \times H^1_r$.
\end{proposition}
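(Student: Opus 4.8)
The plan is to reduce the infinite-dimensional coercivity question to a finite-dimensional linear-algebra condition on the matrices $K$ and $J$ from Definition~\ref{D:BF}, exploiting that each operator $L_{1,2}$ has only finitely many negative directions (Proposition~\ref{P:indexL}) and is invertible in the weighted sense of Proposition~\ref{P:Inv-radial}. First I would record the radial reduction of the orthogonality conditions: for radial $f$ the condition $\langle f, x_i Q\rangle = 0$ holds automatically (the integrand is odd in $x_i$), so \eqref{orthogonal conditions1} collapses to $f \in \mathcal{U} = \{Q, Q_1\}^\perp$; likewise \eqref{orthogonal conditions2} collapses to $g \in \mathcal{V} = \{Q_1, Q_2\}^\perp$. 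Hence $\mathcal{U}, \mathcal{V} \subseteq H^1_r$, and because $B(u,u) = B_1(f,f) + B_2(g,g)$, the Spectral Property in the radial case follows once $B_1 > 0$ on $\mathcal{U}$ and $B_2 > 0$ on $\mathcal{V}$.

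The main tool is the standard finite-dimensional reduction of Fibich--Merle--Rapha\"el and Marzuola--Simpson (\cite{FMR2006}, \cite{MS2011}): if $L$ is self-adjoint with exactly $n$ negative eigenvalues, no zero eigenvalue in the weighted sense of Proposition~\ref{P:Inv-radial}, and $g_1, \dots, g_m$ are linearly independent and exponentially decaying, then $\langle L f, f\rangle > 0$ for every nonzero $f \perp \mathrm{span}\{g_1, \dots, g_m\}$ if and only if the symmetric matrix $\mathcal{M}_{ij} = \langle L^{-1} g_i, g_j\rangle$ is nondegenerate and has exactly $n$ negative eigenvalues. I would first verify that the entries are precisely the tabulated quantities: solving $L_1 U_1 = Q$, $L_1 U_2 = Q_1$ gives $K_{ij} = B_1(U_i, U_j) = \langle L_1^{-1} g_i, g_j\rangle$ with $(g_1, g_2) = (Q, Q_1)$, and similarly $J_{ij} = \langle L_2^{-1} g_i, g_j\rangle$ with $(g_1, g_2) = (Q_1, Q_2)$; the near-zero entries of $|K_{12}/K_{21} - 1|$ and $|J_{12}/J_{21} - 1|$ in Tables~\ref{K0}--\ref{J0} are the numerical witnesses that $\mathcal{M}$ is symmetric.

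For $B_1$ we have $n = \mathrm{ind}(L_1) = 2$ against exactly two constraints, so the criterion demands that $K$ be negative definite. This is exactly what Table~\ref{K0} records: $K_{11} < 0$ and $KK = \det K > 0$ for every $d = 5, \dots, 12$, so both eigenvalues of $K$ are negative and $B_1$ is positive (indeed coercive) on $\mathcal{U}$. For $B_2$ the count is more delicate, and this is the step that needs the most care: here $n = \mathrm{ind}(L_2) = 1$ but we impose two constraints, so $J$ is a $2\times 2$ matrix for which we must show $n_-(J) = 1$ with $J$ nondegenerate. The sign data of Table~\ref{J0}, namely $JJ = \det J < 0$ throughout, forces the eigenvalues of $J$ to have opposite signs, giving exactly $n_-(J) = 1 = \mathrm{ind}(L_2)$. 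I would emphasize that the single constraint $Q_1$ does \emph{not} suffice, since $J_{11} = \langle L_2^{-1} Q_1, Q_1\rangle > 0$ would, by the one-dimensional version of the criterion, leave a surviving negative direction on $\{Q_1\}^\perp$; both conditions $\langle g, Q_1\rangle = \langle g, Q_2\rangle = 0$ are genuinely needed to remove it.

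Finally I would upgrade the strict positivity to the quantitative coercive bound \eqref{E:Bpositive}. Since the reduction shows $B_{1,2} > 0$ on the constraint spaces, the perturbation (index-stability) result for $\bar{L}_{1,2} = L_{1,2} - \delta_0 e^{-|x|}$ quoted above gives $B_{1,2}(u,u) \geq \delta_0 \int |u|^2 e^{-|x|}$ for $\delta_0$ small, and the gradient term $\int |\nabla u|^2$ is then recovered by combining this with the definition of $B_{1,2}$ and the boundedness of the potentials $V_{1,2}$. The hard part is not any of the sign-checking but the justification of the finite-dimensional reduction itself in this setting, where $0$ lies at the bottom of the essential spectrum of $L_{1,2}$ (no spectral gap) so that $L^{-1}$ exists only in the exponentially weighted sense of Proposition~\ref{P:Inv-radial}; one must confirm that this weak invertibility, rather than a genuine gap, still supports the inertia bookkeeping and that no zero-energy resonance creates an extra degeneracy in $K$ or $J$.
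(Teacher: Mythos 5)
Your proposal is correct and follows essentially the same route as the paper: the radial collapse of the orthogonality conditions, the index counts of Proposition \ref{P:indexL}, the sign data of Tables \ref{K0}--\ref{J0} ($K$ negative definite; $\det J<0$ with $\mathrm{ind}(L_2)=1$), and the perturbation/stability argument for the coercive bound, deferring the weighted-invertibility technicalities to \cite{FMR2006}, \cite{MS2011}. The only cosmetic difference is that you invoke the abstract inertia criterion for the matrices $K$ and $J$, whereas the paper exhibits the negative directions explicitly ($U_1,U_2$ for $B_1$, and $\bar Z = Z_1 - \frac{J_{12}}{J_{22}}Z_2$ with $B_2(\bar Z,\bar Z)=JJ/J_{22}<0$ for $B_2$) -- the same computation in different packaging.
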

\begin{proof}

We outline the key steps of the proof, we refer to \cite{FMR2006}, \cite{MS2011} or \cite{SZ2011} for the details, as they are the same.
Let's consider the form $B_1$ and recall from Proposition 3.2 that the $\mathrm{ind}_{H^1_r}(B_1)=2$. From the Table \ref{K0} we have $K_{11}=B_1(U_1,U_1)<0.$ This suggests that $U_1$ is one of the negative spans of $L_1$ associated with $B_1$. Similarly, we have $U_2$ is the other negative span of $L_1$ associated to $B_1$, since $K_{22}<0$. Moreover, the determinant $KK>0$ suggests that the matrix from their linear combinations is negative definite, and thus, the decomposition is non-degenerate. In summary, since we exhibit two negative spans and $\mathrm{ind}(B_1)=2$, the remaining spans, that are orthogonal to $(U_1, U_2)$ in the sense of $B_1$, or equivalently, orthogonal to $Q$ or $Q_1$ in $L^2$ sense, must generate the positive outcome, i.e., $B_1(f,f)>0$ if $\langle f,Q \rangle=\langle f,Q_1 \rangle=0.$

Next, we consider the bilinear form $B_2$, while $J_{11}=B_2(Z_1,Z_1)$ and $J_{22}=B_2(Z_2,Z_2)$ generate the positive values, we check their linear combination $\bar{Z}=Z_1+\alpha Z_2$ with $\alpha=-\frac{J_{12}}{J_{22}}$. This value comes from the fact that if we calculate the quadratic form, this constant gives the minimum value for the bilinear form. Moreover, $B_2(\bar{Z},\bar{Z})=C\cdot JJ$ for some constant $C>0$, i.e., the value of the bilinear form has the same sign as the determinant. Thus, $JJ<0$ suggests $B_2(\bar{Z},\bar{Z})<0$ and $\bar{Z}$ lies on the negative span of $L_2$ as we desire. Therefore, any $g$, which is orthogonal to the direction $\bar{Z}$, i.e., $B_2(g,\bar{Z})=0$, or equivalently, $\langle g,Q_1 \rangle=\langle g,Q_2 \rangle=0$, is orthogonal to $(Z_1, Z_2)$, since $\bar{Z}$ is their linear combination, and consequently, we have $B_2(g,g)>0$, which justifies the statement of Proposition \ref{P:B-space}. %3.6.

To make the argument rigorous, note that the functions $f$ and $g$, considered above, are not in $H^1_r$ (from the Proposition \ref{P:Inv-radial}, we have $(1+r^{d-2})f \in L^{\infty}(\mathbb{R}^d)$ and a similar requirement for $g$). We introduce an appropriate cut-off function and then take the limit. For further details, see \cite{FMR2006}, \cite{MS2011} or \cite{SZ2011}.
\end{proof}

Finally, we provide details on the Spectral Property 2, where the orthogonal conditions for the second bilinear form can be changed to just one condition. 
We set $L_2 Z =Q$ and compute the quantity $\langle L_2 Z,Z \rangle$.
Table \ref{T:L2Q} contains the values for the quantity $\langle L_2 Z, Z \rangle$ in dimensions 2 to 6 with two different boundary conditions, and then in dimensions 7 to 12 with $u(\infty)=0$.
\begin{table}[H]
\begin{tabular}{|c|c|c|c|c|c|c|}
\hline
Boundary conditions&$d$&$2$&$3$&$4$&$5$&$6$\\
\hline
$u'(L)=0$&$\langle L_2 Z,Z \rangle$&$-2.9513$&$-4.8101$&$39.6029$&$-173.4202$&$-14058.3264$\\
\hline
$u(\infty)=0$&$\langle L_2 Z,Z \rangle$&NA&$-6.7563$&$-23.3181$&$-107.8715$&$-623.5943$\\
\hline
$d$&$7$&$8$&$9$&$10$&$11$&$12$\\
\hline
$\langle L_2 Z,Z \rangle, u(\infty)=0$&$-4290.4717$&$-33763.4$&$-291711$&$-2604677$&$-20638152$&$-33716967$\\
\hline
\end{tabular}
%\vspace{.1cm}
\caption{Value of $\langle L_2 Z,Z \rangle$ for different dimensions, where $L_2 Z=Q$.}
\label{T:L2Q}
\end{table}
\begin{remark}\label{R:1}
Since by Proposition \ref{P:indexL} the index of $L_2$ is one, arguing as in Proposition \ref{P:B-space} and incorporating the results from Table \ref{T:L2Q}, we obtain that  the spectral property holds for $L_2$ only with one condition: $\langle Q,g \rangle = 0$, which proves Theorem \ref{T:SP2},  and thus,  validates the Spectral Property 2 in the radial case. For completeness, we include details in the general case in Theorem \ref{T:nonradial2}.
\end{remark}

\subsection{The non-radial case} 

To study the non-radial case, we rewrite our operators in the form of spherical harmonics, i.e.,
\begin{align}\label{L12 sh}
L_{1,2}^{(k)}=-\partial_{rr}- \frac{d-1}{r} \partial_r +V_{1,2}(r)+\frac{k(k+d-2)}{r^2}, \quad k=0,1,2,... .
\end{align}
The notation $L_{1,2}^{(k)}$ and $B_{1,2}^{(k)}$ will stand for the $k$th spherical harmonics. When $k=0$, it is simply the radial case, which we already discussed in Section \ref{S:radial}. For $k > 0$ the bilinear forms $B_{1,2}^{(k)}$ have the same properties as $L_{1,2}^{(k)}$, similar to the case $k=0$ (see \cite{RS1970}, \cite{FMR2006} and \cite{MS2011}). We first study indices of the $k$th bilinear forms. 

\begin{proposition}[indices in the non-radial case]\label{P:ind-nonradial}
The index of the bilinear form $\mathrm{ind}(B_{1,2}^{(k)})$ equals the number of zeros of the IVP problem, counting from $r>0$:
\begin{align}\label{index eqn sh}
\begin{cases}
&L_{1,2}^{(k)} U^{(k)}=0,\\
&\ds\lim_{r\rightarrow 0}\dfrac{U^{(k)}(r)}{r^k}=1,\\
&\ds\lim_{r\rightarrow 0} \dfrac{d}{dr} \dfrac{U^{(k)}(r)}{r^k}=0.
\end{cases}
\end{align}
Numerical calculations show that for $d=4,5,6,7,8,9,10$, we have
$$\mathrm{ind}(L_1^{(1)})=1, \quad \mathrm{ind}(L_1^{(2)})=0, \quad \mathrm{ind}(L_2^{(1)})=0. $$
Consequently,
$$\mathrm{ind}_{H^1}(B_1^{(1)})=1, \quad \mathrm{ind}_{H^1}(B_1^{(2)})=0, \quad \mathrm{ind}_{H^1}(B_2^{(1)})=0.$$
For $d=11,12$, we get
$$ \mathrm{ind}(L_1^{(1)})=1, \quad \mathrm{ind}(L_1^{(2)})=1, \quad \mathrm{ind}(L_1^{(3)})=0, $$
$$ \mathrm{ind}(L_2^{(1)})=1, \quad \mathrm{ind}(L_2^{(2)})=0, $$
and consequently,
$$\mathrm{ind}_{H^1}(B_1^{(1)})=1, \quad \mathrm{ind}_{H^1}(B_1^{(2)})=1, \quad ind_{H^1}(B_1^{(3)})=0.$$
$$\mathrm{ind}_{H^1}(B_2^{(1)})=1, \quad \mathrm{ind}_{H^1}(B_2^{(2)})=0. $$
\end{proposition}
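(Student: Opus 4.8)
The plan is to reduce the computation of each $\mathrm{ind}(B_{1,2}^{(k)})$ to a node-counting problem for a zero-energy solution, exactly as in the radial case treated in Proposition \ref{P:indexL}, and then to carry out the counting numerically for each pair $(d,k)$. First I would recall that, since $V_{1,2}$ is smooth and exponentially decaying and the centrifugal term $\frac{k(k+d-2)}{r^2}$ is a non-negative perturbation that does not affect the essential spectrum, each operator $L_{1,2}^{(k)}$ is self-adjoint with essential spectrum $[0,\infty)$ and only finitely many negative eigenvalues (Theorem XIII.8 of \cite{RS1970}); consequently $\mathrm{ind}_{H^1}(B_{1,2}^{(k)})$ equals the Morse index of $L_{1,2}^{(k)}$, i.e.\ its number of negative eigenvalues, just as in the case $k=0$.

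Second, I would make the oscillation count rigorous. The cleanest route is the Liouville substitution $U^{(k)} = r^{-(d-1)/2}\psi$, which removes the first-order term and turns $L_{1,2}^{(k)}U^{(k)}=0$ into a one-dimensional Schr\"odinger equation $-\psi'' + \widetilde V_{1,2}^{(k)}\psi = 0$ on the half-line, with effective potential
\begin{align*}
\widetilde V_{1,2}^{(k)}(r) = V_{1,2}(r) + \frac{k(k+d-2)}{r^2} + \frac{(d-1)(d-3)}{4r^2}.
\end{align*}
Near $r=0$ the indicial exponents of the original equation are $k$ and $-(k+d-2)$, so the regular solution is the one with $U^{(k)}(r)\sim r^k$; this is precisely the branch selected by the normalization $\ds\lim_{r\to 0} U^{(k)}(r)/r^k = 1$ together with $\ds\lim_{r\to 0}\frac{d}{dr}\big(U^{(k)}(r)/r^k\big)=0$, and it corresponds to $\psi$ vanishing at the origin. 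With the problem in this standard Sturm--Liouville form, the classical oscillation theorem gives that the number of negative eigenvalues of $L_{1,2}^{(k)}$ equals the number of interior zeros in $(0,\infty)$ of the regular zero-energy solution, which establishes the analytic assertion of the proposition.

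Third, I would execute the count. Since $\widetilde V_{1,2}^{(k)}\to 0$ exponentially, for large $r$ the equation is again essentially free with solutions behaving like $r^k$ and $r^{-(k+d-2)}$, both non-oscillatory; hence after some finite $r_0$ the solution is monotone and no further sign changes occur, exactly as in the radial discussion around \eqref{index ode free}. I would therefore solve the regularized problem for $W = U^{(k)}/r^k$, which satisfies a regular equation at the origin, using the potentials $V_{1,2}$ computed by the stable high-dimensional method of Appendix C, integrate until the solution clearly enters the free regime, and tabulate the number of zeros for each $d=4,\dots,12$ and the relevant values $k=1,2,3$.

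The main obstacle is the \emph{certification} of the exact node count rather than the reduction itself. Two points need care: the accurate evaluation of $V_{1,2}$ in high dimensions, where the negative power $Q^{\frac{4}{d}-1}$ degrades the exponential-decay tail (this is why the alternative evaluation of Appendix C is required), and the verification that the integration has genuinely reached the non-oscillatory free region, so that a near-tangency is neither mistaken for nor against a true zero. For the borderline cases --- in particular $\mathrm{ind}(L_1^{(2)})$ and $\mathrm{ind}(L_2^{(1)})$, each of which jumps from $0$ to $1$ between $d=10$ and $d=11$ --- the zero-energy solution only marginally develops (or fails to develop) an additional node, so the robustness of the count against discretization and truncation error is where the argument is genuinely delicate and must be backed by the consistency checks employed elsewhere in the paper.
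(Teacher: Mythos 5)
Your proposal is correct and follows essentially the same route as the paper: reduce $\mathrm{ind}(B_{1,2}^{(k)})$ to counting the zeros of the regular zero-energy solution (justified via Theorem XIII.8 of \cite{RS1970}), regularize the IVP at the origin through the substitution $\tilde U^{(k)}=U^{(k)}/r^k$, integrate numerically with the stabilized potentials of Appendix C, and terminate the count once the solution enters the free regime $U\approx C_1 r^k + C_2 r^{2-d-k}$, where no further sign changes can occur. Your explicit Liouville substitution $U^{(k)}=r^{-(d-1)/2}\psi$ merely makes the node-count/Morse-index equivalence more self-contained; the paper delegates exactly this point to \cite{RS1970}, \cite{FMR2006} and \cite{MS2011}, so the two arguments coincide in substance.
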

This proposition is obtained from the numerical solutions of the IVP in \eqref{index eqn sh} in the corresponding cases, see Figures \ref{index 5d k1 k2} as an example in the case $d=5$. {Figures \ref{d11 index k2} and \ref{d11 index k3} show the solution of \eqref{index eqn sh} in cases $d=11,12$ and $k=2,3$, since they are different from the cases $d=5,\cdots, 10$.} Here, $U$ stands for the solution of $L_1^{(k)}$, and $Z$ for $L_2^{(k)}$.

In this non-radial case, in our numerical simulations, we want to get rid of the limit terms in the boundary conditions. We use the approach from \cite{MS2011}: let $U^{(k)}(r)=r^k\tilde{U}^{(k)}(r)$, then the operator $L_i^{(k)}$ becomes
\begin{align}
\tilde{L}_{1,2}^{(k)}=\partial_{rr}-\frac{d-1+2k}{r}\partial_r+V_{1,2}.
\end{align}
We rewrite (\ref{index eqn sh}) as follows
\begin{align}\label{index eqn}
\begin{cases}
r^k \left(-\partial_{rr} \tilde{U}- \dfrac{d-1+2k}{r} \partial_r  \tilde{U}+V_{1,2}(r)\tilde{U}\right)=0\\
\tilde{U}(0)=1,\quad \tilde{U}_r(0)=0.
\end{cases}
\end{align}

\begin{figure}%[h]
\includegraphics[width=0.45\textwidth]{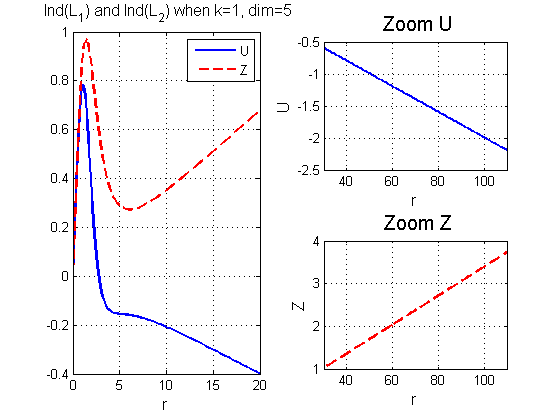}
\includegraphics[width=0.45\textwidth]{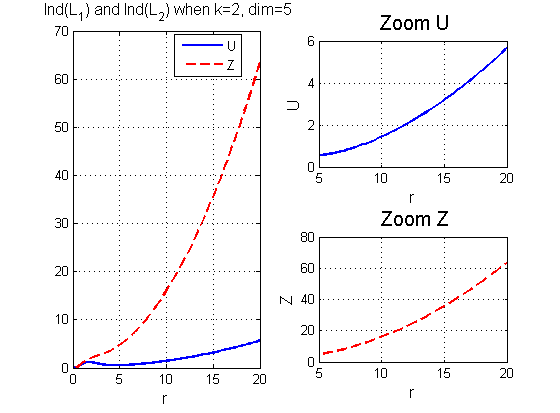}
\caption{\label{index 5d k1 k2} Solutions of \eqref{index eqn sh} in $d=5$. Numerical justification of the Proposition \ref{P:ind-nonradial}. Left figure is the for $k=1$ and right figure is for $k=2$. The blue line shows the behaviors of $U$ from $L_1^{(k)}U=0$ and the red line shows the behaviors of $Z$ from $L_2^{(k)}Z=0$. To the right of each plot are zooms of the tails for $U$ and $Z$. One can see the tails of $U$ and $Z$ increase or decrease with a rate $r^k$, which justifies the asymptotic behavior in \eqref{U: asymptote}.}
\end{figure}

\begin{figure}
\includegraphics[width=0.45\textwidth]{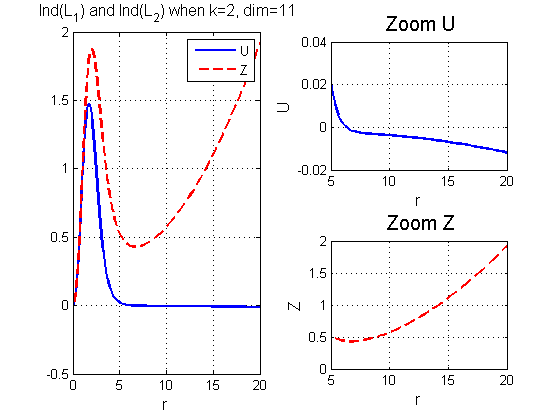}
\includegraphics[width=0.45\textwidth]{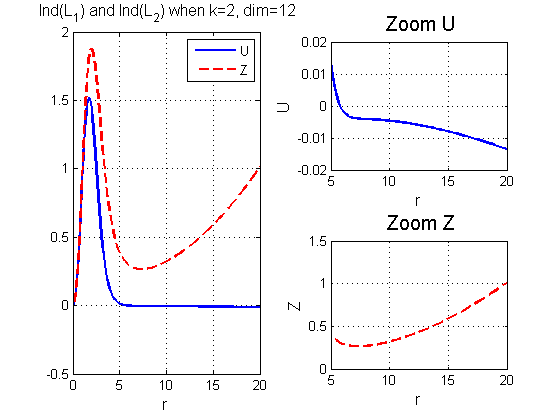}
\caption{\label{d11 index k2} Solutions of \eqref{index eqn sh} in $d=11$ (left) and $d=12$ (right), or numerical justification of the Proposition \ref{P:ind-nonradial}. The blue line shows the behaviors of $U$ from $L_1^{(2)}U=0$ and the red line shows the behaviors of $Z$ from $L_2^{(2)}Z=0$. To the right of each plot are zooms of the tails for $U$ and $Z$. One can see $U$ and $Z$ increase or decrease with a quadratic rate. This justifies the asymptotic behavior in \eqref{U: asymptote} for $k=2$.}
\end{figure}

\begin{figure}
\begin{center}
\includegraphics[width=0.45\textwidth]{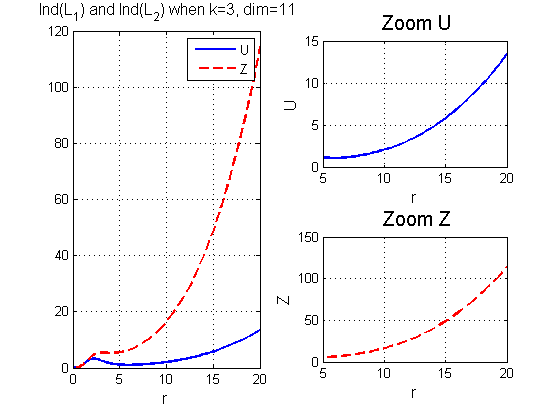}
\includegraphics[width=0.45\textwidth]{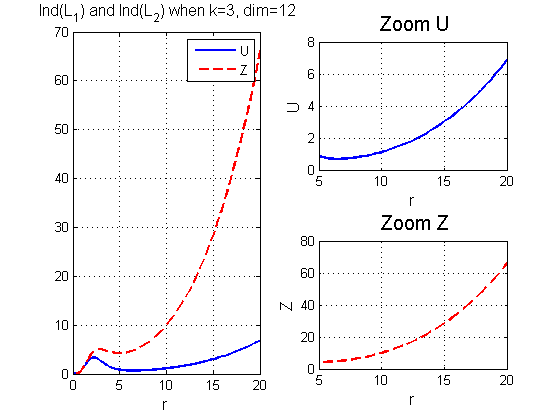}
\caption{\label{d11 index k3} Solutions of \eqref{index eqn sh} in $d=11$ (left) and $d=12$ (right), or numerical justification of the Proposition \ref{P:ind-nonradial}. The blue line shows the behaviors of $U$ from $L_1^{(3)}U=0$ and the red line shows the behaviors of $Z$ from $L_2^{(3)}Z=0$. To the right of each plot are zooms of the tails for $U$ and $Z$. One can see $U$ and $Z$ increase or decrease with a cubic rate. This justifies the asymptotic behavior in \eqref{U: asymptote} for $k=3$.}
\end{center}
\end{figure}

The IVP (\ref{index eqn}) can be solved by matlab solver ``\texttt{ode45}". Then the solution $U$ can be reconstructed by $U(r)=r^k\tilde{U}(r)$. From \cite[Sec. 3]{MS2011}, it follows that $U(r)$ satisfies the asymptotic behavior for $r\gg 1$
\begin{align}\label{U: asymptote}
U\approx C_1r^k+C_2r^{2-d-k}.
\end{align}
This indicates that for $r \gg 1$, $U(r)$ either grows or decays with a polynomial rate $r^k$. Consequently, no more zeros will occur. We stop calculations once we see such  polynomial increase or decrease.

The following property (\cite{RS1970}, also see \cite{FMR2006}, \cite{MS2011}) shows that once we found some $k_0$ such that $\mathrm{ind}(L_{1,2}^{(k_0)})=0$, we can stop the calculation. This avoids checking infinitely many $k$'s.
\begin{proposition}
The index is monotonic with respect to $k$, that is,
$$
\mathrm{ind}(B_{1,2}^{(k+1)})\leq \mathrm{ind}(B_{1,2}^{(k)}).
$$
\end{proposition}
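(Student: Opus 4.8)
The plan is to deduce the monotonicity from a pointwise comparison of the two bilinear forms, followed by the variational (min--max) characterization of the index. First I would subtract the two operators in \eqref{L12 sh}: the potentials $V_{1,2}$ and the radial part are identical, so only the centrifugal terms differ, and
\[
L_{1,2}^{(k+1)} - L_{1,2}^{(k)} = \frac{(k+1)(k+d-1) - k(k+d-2)}{r^2} = \frac{2k+d-1}{r^2},
\]
which is a strictly positive multiplication operator for every $k\geq 0$ and $d\geq 2$. Consequently, on the common form domain (the weighted $H^1$ of radial functions with respect to $r^{d-1}\,dr$, on which both forms are defined), one has
\[
B_{1,2}^{(k+1)}(u,u) = B_{1,2}^{(k)}(u,u) + \int_0^\infty \frac{2k+d-1}{r^2}\,|u|^2\, r^{d-1}\,dr \;\geq\; B_{1,2}^{(k)}(u,u)
\]
for every admissible $u$.

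With this form inequality in hand the conclusion is immediate. Since the potentials $V_{1,2}$ are smooth and exponentially decaying, each $L_{1,2}^{(k)}$ is self-adjoint with essential spectrum $[0,\infty)$ and only finitely many negative eigenvalues (as already recorded via Theorem XIII.8 in \cite{RS1970}), and $\mathrm{ind}(B_{1,2}^{(k)})$ equals the number of eigenvalues lying strictly below $0$. The Courant--Fischer min--max principle, applied to the form inequality above, gives $\lambda_n(L_{1,2}^{(k+1)})\geq \lambda_n(L_{1,2}^{(k)})$ for every $n$, so the count of eigenvalues below $0$ cannot increase as $k\mapsto k+1$; this is exactly $\mathrm{ind}(B_{1,2}^{(k+1)})\leq \mathrm{ind}(B_{1,2}^{(k)})$. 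Equivalently, and more elementarily, the index equals the maximal dimension of a subspace on which the form is negative: if $S$ is such a subspace for $B_{1,2}^{(k+1)}$, then $B_{1,2}^{(k)}(u,u)\leq B_{1,2}^{(k+1)}(u,u)<0$ for all nonzero $u\in S$, so $S$ is also a negative subspace for $B_{1,2}^{(k)}$, and taking $S$ of maximal dimension yields the claim.

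I expect the only genuine subtlety --- and hence the main obstacle --- to be the bookkeeping needed to justify the form comparison on a single common domain rather than the comparison itself. Near $r=0$ the admissible functions in the $k$th sector behave like $r^k$ (cf.\ the normalization in \eqref{index eqn sh} and the asymptotics \eqref{U: asymptote}), so one must check that $\int_0^\infty r^{-2}|u|^2 r^{d-1}\,dr$ is finite for the relevant $u$; this holds comfortably in the dimensions $d\geq 4$ at issue here. One must also confirm that the min--max values are taken below the bottom $0$ of the essential spectrum, so that eigenvalue counting is not contaminated by the continuous spectrum; this is guaranteed by the decay of $V_{1,2}$ together with the centrifugal term. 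Both points are standard once self-adjointness and the form domains are pinned down as in \cite{RS1970} and \cite{MS2011}, and neither affects the strict positivity of the perturbation $\frac{2k+d-1}{r^2}$, which is the crux of the argument.
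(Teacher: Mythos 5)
Your proposal is correct and is essentially the standard argument: the paper itself gives no proof of this proposition (it is stated as known, citing \cite{RS1970}, \cite{FMR2006}, \cite{MS2011}), and the argument in those references is precisely your comparison $L_{1,2}^{(k+1)}-L_{1,2}^{(k)}=\frac{2k+d-1}{r^2}>0$ combined with the characterization of the index as the maximal dimension of a negative subspace (equivalently, the count of negative eigenvalues via min--max). The domain point you flag is handled exactly as you say: the form domain of $B_{1,2}^{(k+1)}$ is contained in that of $B_{1,2}^{(k)}$, since the centrifugal integrability requirement only weakens as $k$ decreases, so any negative subspace for $B_{1,2}^{(k+1)}$ is admissible for, and negative for, $B_{1,2}^{(k)}$.
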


Moreover, the uniqueness and stability of the indices of the bilinear forms $B_{1,2}$ can also be extended to the non-radial case:

\begin{proposition}[Stability, \cite{FMR2006}, \cite{MS2011}]
For the operators $L^{(k)}_{1,2}$ defined in (\ref{L12 sh}), there exists a universal constant $\delta_0>0$, sufficiently small, such that for the perturbed operators
$$\bar{L}_{1,2}^{(k)}=L_{1,2}^{(k)}-\delta_0e^{-|{x}|},$$
the associated bilinear forms are stable:
$$\mathrm{ind}_{H^1}(\bar{B}^{(k)}_{1,2})=\mathrm{ind}_{H^1}(B^{(k)}_{1,2}).$$
\end{proposition}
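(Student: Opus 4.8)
The plan is to reduce the stability of the index to the single analytic fact that $0$ is neither an eigenvalue nor a threshold resonance of $L_{1,2}^{(k)}$, and then to run a monotone perturbation argument. Recall from the discussion following the definition of the index that $\mathrm{ind}_{H^1}(B_{1,2}^{(k)})$ equals the number of negative eigenvalues of the Schr\"odinger operator $L_{1,2}^{(k)}$, which is finite by Theorem XIII.8 of \cite{RS1970}. Write $N$ for this number and let $\lambda_1 \le \cdots \le \lambda_N < 0$ be the negative eigenvalues. The perturbation $W = -\delta_0\, e^{-|x|}$ is a bounded, nonpositive multiplication operator with $\|W\|_{\mathrm{op}} = \delta_0$, and because it decays exponentially it is relatively compact with respect to $L_{1,2}^{(k)}$.

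First I would pin down the essential spectrum. Since $V_{1,2}$ and the centrifugal term $k(k+d-2)/r^2$ tend to $0$ at infinity, $\sigma_{\mathrm{ess}}(L_{1,2}^{(k)}) = [0,\infty)$, and by Weyl's theorem the relatively compact perturbation $W$ leaves it unchanged, so $\sigma_{\mathrm{ess}}(\bar L_{1,2}^{(k)}) = [0,\infty)$ as well. Hence the index can only change through \emph{discrete} eigenvalues crossing the threshold $0$. Next I would establish nondegeneracy at zero: a zero-energy solution of $L_{1,2}^{(k)}U=0$ with the IVP data has the asymptotics \eqref{U: asymptote}, $U \approx C_1 r^k + C_2 r^{2-d-k}$, and the numerical solutions of \eqref{index eqn sh} (see Figure \ref{index 5d k1 k2}) show the leading coefficient $C_1 \ne 0$, i.e. $U$ grows or decays exactly at the rate $r^k$ and is not in $L^2(r^{d-1}\,dr)$. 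Thus $0 \notin \sigma_p(L_{1,2}^{(k)})$, and there is a spectral gap: $\lambda_N \le -\eta$ for some $\eta > 0$, with no spectrum in $(-\eta,0)$.

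Then comes the monotone homotopy. Put $L_{1,2}^{(k)}(t) = L_{1,2}^{(k)} + tW$ for $t \in [0,1]$. Since $W \le 0$, the min-max principle forces every eigenvalue to be nonincreasing in $t$ and to move by at most $\|W\|_{\mathrm{op}} = \delta_0$. Choosing $\delta_0 < \eta$, the $N$ original negative eigenvalues stay strictly below $0$ throughout, which already yields $\mathrm{ind}_{H^1}(\bar B_{1,2}^{(k)}) \ge N$; equivalently, one sees directly that $\bar B_{1,2}^{(k)}(u,u) = B_{1,2}^{(k)}(u,u) - \delta_0 \int e^{-|x|}|u|^2 \le B_{1,2}^{(k)}(u,u)$, so any subspace on which $B_{1,2}^{(k)}$ is negative keeps $\bar B_{1,2}^{(k)}$ negative.

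The hard part will be the matching upper bound $\mathrm{ind}_{H^1}(\bar B_{1,2}^{(k)}) \le N$: ruling out that the attractive perturbation pulls a brand-new eigenvalue out of the continuum through the threshold $0$. This is exactly where the exponential decay of $V_{1,2}$ and the resonance-free threshold are essential. In the effective dimension $d+2k \ge 5$ the decaying zero-energy branch $r^{2-d-k}$ already lies in $L^2(r^{d-1}\,dr)$, so $0$ carries no threshold resonance; a Birman--Schwinger (or Jost-function) estimate then shows that the operator controlling bound-state creation has norm strictly less than $1$ once $\delta_0$ is small, whence no eigenvalue appears in $(-\eta,0]$. This step is adapted verbatim from \cite{FMR2006} and \cite{MS2011}. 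Combining the two inequalities gives $\mathrm{ind}_{H^1}(\bar B_{1,2}^{(k)}) = N = \mathrm{ind}_{H^1}(B_{1,2}^{(k)})$, which is the claim.
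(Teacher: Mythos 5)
First, a structural remark: the paper does not prove this proposition at all --- it is imported, with attribution, from \cite{FMR2006} and \cite{MS2011}, so there is no in-paper proof to compare against. Judged on its own merits, your overall architecture is the right one and matches the spirit of those references: the inequality $\mathrm{ind}_{H^1}(\bar B^{(k)}_{1,2})\geq \mathrm{ind}_{H^1}(B^{(k)}_{1,2})$ is indeed immediate from $\bar B\leq B$, and the substance of the proposition is the reverse inequality, which must come from nondegeneracy of the threshold (no zero eigenvalue, no zero resonance), with the nondegeneracy itself supplied numerically via $C_1\neq 0$ in \eqref{U: asymptote} --- consistent with the paper's numerically-assisted philosophy.

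The genuine gap is that your central step --- ruling out a new eigenvalue being pulled out of the continuum through $0$ by the attractive perturbation --- is asserted rather than proved, and the assertion that it is ``adapted verbatim from \cite{FMR2006} and \cite{MS2011}'' is not accurate. Two concrete problems. (i) The standard Birman--Schwinger principle compares $L+W$ with the resolvent $(L+E)^{-1}$, and here $L^{(k)}_{1,2}$ has negative eigenvalues, so $(L+E)^{-1}$ is neither positive nor uniformly bounded on $(0,\eta)$; moreover the bound you need --- uniform boundedness of the weighted resolvent $e^{-|x|/2}(L^{(k)}_{1,2}+E)^{-1}e^{-|x|/2}$ as $E\to 0^+$ --- is a nontrivial threshold (Jensen--Kato type) estimate whose validity rests precisely on the nondegeneracy and the exponential decay of $V_{1,2}$; it must be stated and proved (or precisely cited), not waved at, since it is the entire content of the proposition. (ii) The cited references do not argue this way: their index-stability proof is a compactness/contradiction argument --- assuming the coercivity $B(u,u)\geq \delta_0\int e^{-|x|}|u|^2$ fails along a sequence $\delta_0\to 0$, one extracts a weak limit with finite gradient and weighted $L^2$ norm which solves $L^{(k)}u=0$, is regular at the origin, and (via a Hardy-type bound forcing decay) must be a decaying multiple of the regular solution of \eqref{index eqn sh}, contradicting $C_1\neq 0$. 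So the step you defer to the literature is carried out there by a different mechanism, and your proof as written has a hole exactly where the work is. Two smaller omissions: the proposition claims a \emph{universal} $\delta_0$, in particular uniform in $k$, while your gap $\eta$ and your putative Birman--Schwinger constant depend on $k$ (this is fixable: for large $k$ the centrifugal term $k(k+d-2)/r^2$ dominates any fixed $\delta_0 e^{-|x|}$, leaving finitely many $k$ to treat); and you never exclude that $\bar L^{(k)}_{1,2}$ acquires an eigenvalue sitting exactly at $0$, which would also change $\mathrm{ind}_{H^1}(\bar B^{(k)}_{1,2})$.
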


\begin{proposition}[Invertibility, \cite{FMR2006}, \cite{MS2011}]
Let $d\in [5,12]$ and $f\in C^0_{loc}(\mathbb{R}^d)$ with radial symmetry and $|f(r)|\leq e^{-Cr}$. Then there exists a unique radial solution to
$$L_{1,2}^{(k)}u=f \quad with \, (1+r^{d-2+k})u \in L^{\infty}(\mathbb{R}^d).$$
\end{proposition}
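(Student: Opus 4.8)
The plan is to reduce the statement to a one-dimensional Sturm--Liouville problem on the half-line and to invert it by variation of parameters, exactly as in the radial case of Proposition~\ref{P:Inv-radial}, while carefully tracking the shifted indicial exponents produced by the centrifugal term $\frac{k(k+d-2)}{r^2}$ in \eqref{L12 sh}. First I would analyze the homogeneous equation $L_{1,2}^{(k)}u=0$. Near $r=0$ the equation is regular-singular, and since $V_{1,2}$ is smooth and bounded there, the indicial equation is the free one, $\rho(\rho-1)+(d-1)\rho - k(k+d-2)=0$, whose roots are $\rho=k$ and $\rho=2-d-k$. Hence there is a solution $U_0$ regular at the origin with $U_0 \sim r^k$, namely the solution of the IVP \eqref{index eqn sh} already computed for Proposition~\ref{P:ind-nonradial}. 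Near $r=\infty$, the exponential decay of $V_{1,2}$ renders the equation asymptotically free, so by \eqref{U: asymptote} its solution space is spanned by one solution growing like $r^k$ and one decaying like $r^{2-d-k}$; let $U_\infty$ denote the latter, the unique (up to scalar) solution with $U_\infty \sim r^{2-d-k}$.

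Next I would build the resolvent. Writing the operator in Sturm--Liouville form with weight $p(r)=r^{d-1}$, the quantity $W=r^{d-1}(U_0 U_\infty' - U_0' U_\infty)$ is constant; provided $U_0$ and $U_\infty$ are linearly independent this constant is nonzero, and the Green's function $G(r,s)=U_0(r_{<})\,U_\infty(r_{>})/W$ (with $r_{<}=\min(r,s)$, $r_{>}=\max(r,s)$) yields the particular solution $u(r)=\int_0^\infty G(r,s)\,f(s)\,s^{d-1}\,ds$. The hypothesis $|f(r)|\le e^{-Cr}$ guarantees absolute convergence of the integral. Matching $U_0 \sim r^k$, $U_\infty \sim r^{2-d-k}$ against the weight $s^{d-1}$ then gives, for large $r$, the dominant contribution $u(r)\sim r^{2-d-k}$ from the term $U_\infty(r)\int_0^r U_0 f\,s^{d-1}\,ds$ (the complementary term decays exponentially), while near the origin one checks $u$ is bounded; altogether $|u(r)|\le C(1+r^{d-2+k})^{-1}$, i.e.\ $(1+r^{d-2+k})u\in L^\infty$, as required. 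This is the routine part.

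The main obstacle is establishing that $U_0$ and $U_\infty$ are independent, equivalently that $L_{1,2}^{(k)}$ has trivial kernel in the weighted class --- if $0$ were an eigenvalue, the regular solution would itself decay and invertibility would fail. This is precisely where the numerical input enters: Proposition~\ref{P:ind-nonradial} together with the asymptotics \eqref{U: asymptote} records that the regular solution $U_0$ grows (in magnitude) like $r^k$ at infinity, i.e.\ the coefficient $C_1\neq 0$, so $U_0$ is not in the decaying class and cannot be proportional to $U_\infty$. Thus the Wronskian constant $W$ is nonzero and the construction above goes through.

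Finally, uniqueness follows from the same triviality of the kernel: any two solutions in the stated class differ by a homogeneous solution that is simultaneously $r^k$-regular at the origin and $r^{2-d-k}$-decaying at infinity, hence lies in the (now known to be trivial) kernel and must vanish identically. I expect the only remaining care to be the standard cut-off argument --- as in the proof of Proposition~\ref{P:B-space} --- to place the constructed $u$ rigorously in the prescribed weighted space, but this step is verbatim the adaptation of \cite{FMR2006} and \cite{MS2011} from the $k=0$ case, with every exponent $d-2$ replaced by $d-2+k$.
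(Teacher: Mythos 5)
Your proposal is correct and is essentially the same argument the paper relies on: the paper gives no proof of this proposition itself, deferring to \cite{FMR2006} and \cite{MS2011} (``the proof \dots works in straightforward adaptation to our cases''), and that adaptation is precisely your construction --- a Green's function built from the regular solution $U_0\sim r^k$ at the origin and the decaying solution $U_\infty\sim r^{2-d-k}$ at infinity in Sturm--Liouville form, with the numerically observed polynomial growth of the regular solution (Proposition~\ref{P:ind-nonradial} together with \eqref{U: asymptote}, i.e.\ $C_1\neq 0$) supplying the nonvanishing Wronskian, hence existence, and the triviality of the kernel in the weighted class, hence uniqueness. No gaps to report.
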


Similar to the radial case, we list the numerical values of the bilinear forms, needed to prove the spectral properties for the non-radial case.

\begin{definition}[numerical computations for the bilinear form]
Let the operators $L_i^{(k)}$, $i=1,2$, solve the linear BVP
\begin{align}\label{bvp eqn2}
\begin{cases}
L^{(k)}_{1,2}u=f,\\
u_r(0)=0,\quad u(\infty)=0.
\end{cases}
\end{align}
Define
\begin{align}
& L_1^{(1)}U_1^{(1)}=rQ, \quad  L_2^{(1)}Z_1^{(1)}=Q_r;\\
& L_1^{(2)}U_1^{(2)}=rQ.
\end{align}
and denote the following constants as values of the bilinear forms
\begin{align}
& K_{11}^{(1)}=B_1^{(1)}(U_1^{(1)},U_1^{(1)}), \quad J_{11}^{(1)}=B_2^{(1)}(Z^{(1)}_1,Z^{(1)}_1), \quad
K_{11}^{(2)}=B_1^{(2)}(U_1^{(2)},U_1^{(2)}).
\end{align}
\end{definition}
The values of $K_{ij}^{(k)}$'s and $J_{ij}^{(k)}$'s are listed in Tables \ref{K1}, \ref{J1} and \ref{K2}.
\begin{table}[h]
\begin{tabular}{|l|l|l|l|l|}
\hline
$d$&$5$&$6$&$7$&$8$\\
\hline
$K_{11}^{(1)}$&$-338.0072$&$-2514.4344$&$-21858.3993$&$-216907.232$\\
\hline
$d$&$9$&$10$&$11$&$12$\\
\hline
$K_{11}^{(1)}$&$-2409681.326$&$-29422347$&$-386097788.7178$&$-5238134702.1673$\\
\hline
\end{tabular}
\linebreak
\caption{Values of $K^{(1)}_{11}$ corresponding to the form $B^{(1)}_1$.}
\label{K1}
\end{table}

\begin{table}[h]
\begin{tabular}{|l|l|l|l|l|}
\hline
$d$&$5$&$6$&$7$&$8$\\
\hline
$J_{11}^{(1)}$&$689.5337$&$6433.9656$&$73995.9077$&$1055378.5046$\\
\hline
$d$&$9$&$10$&$11$&$12$\\
\hline
$J_{11}^{(1)}$&$19767639.0831$&$648419143$&$-17491047782$&$-69767861282$\\
\hline
\end{tabular}
\linebreak
\caption{Values of $J^{(1)}_{11}$ corresponding to the form $B^{(1)}_2$.}
\label{J1}
\end{table}

\begin{table}[h]
\begin{tabular}{|l|l|l|}
\hline
$d$&$11$&$12$\\
\hline
$K_{11}^{(2)}$&$52218994506811$&$34912990629971056$\\
\hline
\end{tabular}
\linebreak
\caption{Values of $K^{(2)}_{11}$ corresponding to the form $B^{(2)}_1$.}
\label{K2}
\end{table}

The values of these bilinear forms are computed by Chebyshev collocation method with $N=1025$ collocation points on the interval $L=100$. The artificial boundary condition is constructed in a similar way as in the previous Section \ref{S:radial} to approximate $u(\infty)=0$, see also Appendix C for details on artificial boundary condition.

We are now ready to establish both spectral properties. 
\begin{theorem}\label{T:nonradial1}
Let the space $\mathcal{U}=\lbrace Q, Q_1, x_iQ \rbrace ^{\perp} \subseteq H^1$, and the space $\mathcal{V}=\lbrace Q_1, Q_2, Q_{x_i} \rbrace^{\perp} \subseteq H^1$, where $i=1,2,\cdots,d$. In the dimensions $d \leq 10$, the Spectral Property 1 holds on the given subspace $\mathcal{U} \times \mathcal{V} \subseteq H^1\times H^1 $. In the dimension $d=11$ or $d=12$, the Spectral Property 1 holds only on the given subspace $\mathcal{U} \times \mathcal{V} \subseteq H^1_r\times H^1_r $. It is indecisive in the non-radial case.
\end{theorem}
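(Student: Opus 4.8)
The plan is to reduce the full (non-radial) Spectral Property 1 to the spherical harmonics decomposition and to handle each harmonic sector $k$ separately, assembling the conclusion from the sector-wise coercivity. First I would decompose an arbitrary $u = f + ig \in H^1(\mathbb R^d)$ into spherical harmonics, writing $f = \sum_k f_k$ and $g = \sum_k g_k$, where each $f_k, g_k$ is supported on the $k$th harmonic. Because the potentials $V_{1,2}$ are radial, the bilinear forms diagonalize: $B_1(f,f) = \sum_k B_1^{(k)}(f_k, f_k)$ and similarly for $B_2$. Thus it suffices to show that each $B_1^{(k)}$ is positive on the appropriate subspace of the $k$th sector once the corresponding orthogonality conditions are imposed, and likewise for $B_2^{(k)}$.

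For the sectors $k=0$, I would invoke the already-established radial statement (Proposition \ref{P:B-space}): $B_1$ is coercive on $\{Q, Q_1\}^\perp$ and $B_2$ on $\{Q_1, Q_2\}^\perp$, which covers the orthogonality conditions $\langle f, Q\rangle = \langle f, Q_1\rangle = 0$ and $\langle Q_1, g\rangle = \langle Q_2, g\rangle = 0$. For $k=1$, the conditions $\langle f, x_i Q\rangle = 0$ and $\langle \partial_{x_i} Q, g\rangle = 0$ (for $1 \leq i \leq d$) project out exactly the first-harmonic directions. Here I would argue as in Proposition \ref{P:B-space}: by Proposition \ref{P:ind-nonradial} one has $\mathrm{ind}(L_1^{(1)}) = 1$ and $\mathrm{ind}(L_2^{(1)}) = 0$ for $d \leq 10$; the single negative direction of $L_1^{(1)}$ is exhibited by $U_1^{(1)}$ with $K_{11}^{(1)} = B_1^{(1)}(U_1^{(1)}, U_1^{(1)}) < 0$ (Table \ref{K1}), and since $L_1^{(1)}(rQ) \sim Q_1$-type relation ties $U_1^{(1)}$ to the $x_i Q$ orthogonality, imposing $\langle f, x_i Q\rangle = 0$ removes this negative direction, leaving $B_1^{(1)}(f_1, f_1) > 0$. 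For $B_2^{(1)}$, the index is already $0$, so $J_{11}^{(1)} > 0$ (Table \ref{J1}) confirms positivity outright. For $k \geq 2$ with $d \leq 10$, Proposition \ref{P:ind-nonradial} gives $\mathrm{ind}(L_1^{(2)}) = 0$ and $\mathrm{ind}(L_2^{(1)}) = 0$, and by the monotonicity of the index in $k$ all higher harmonics inherit index $0$; hence $B_{1,2}^{(k)}$ are positive with no further orthogonality conditions needed. Summing over all $k$ yields $B(u,u) > 0$ on $\mathcal U \times \mathcal V$ for $d \leq 10$.

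The sharp point — and the reason the theorem degrades in high dimensions — is the $k=2$ sector for $d = 11, 12$. There Proposition \ref{P:ind-nonradial} shows $\mathrm{ind}(L_1^{(2)}) = 1$ and $\mathrm{ind}(L_2^{(1)}) = 1$, so each of these sectors carries a negative direction. Unlike $k=0$ and $k=1$, the orthogonality conditions \eqref{orthogonal conditions1}–\eqref{orthogonal conditions2} supply no matching constraint in these higher harmonics (they are constructed solely from $Q$, $Q_1$, $Q_2$, $x_i Q$, $\partial_{x_i} Q$, all of which live in the $k = 0, 1$ sectors). The value $K_{11}^{(2)} = B_1^{(2)}(U_1^{(2)}, U_1^{(2)}) > 0$ in Table \ref{K2} shows that the natural candidate $U_1^{(2)}$ does \emph{not} lie on the negative cone — i.e., the single negative eigendirection of $L_1^{(2)}$ is not captured by an available orthogonality condition — so we cannot conclude positivity, and the form may genuinely be indefinite on the non-radial space. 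This is precisely why the theorem asserts positivity only on $\mathcal U \times \mathcal V \subseteq H^1_r \times H^1_r$ (where only $k=0$ survives, already handled by the radial result) and declares the non-radial case indecisive for $d = 11, 12$.

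**Main obstacle.** The hard part is the $k=2$ (and for $L_2$, $k=1$) sector in $d=11,12$: one must either produce an additional orthogonality direction inside these harmonics that annihilates the negative eigendirection, or accept that none exists within the Merle–Rapha\"el orthogonality framework. Since $K_{11}^{(2)}>0$ rules out the obvious candidate and no structural null-space vector is available at $k\geq 2$, the argument cannot be closed in the non-radial setting — hence the restriction to $H^1_r$ and the \emph{indecisive} verdict for $d=11,12$.
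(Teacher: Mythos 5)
Your overall strategy is exactly the paper's: decompose $u=f+ig$ into spherical harmonics so that $B_{1,2}$ diagonalize over the sectors, use the indices from Proposition \ref{P:ind-nonradial} together with the numerically computed bilinear-form values, invoke the monotonicity of the index in $k$ to dispose of all higher harmonics, and isolate the obstruction in $d=11,12$. Your $d\le 10$ argument matches the paper's proof essentially step by step: $k=0$ is the radial Proposition \ref{P:B-space}; for $k=1$, $\mathrm{ind}(L_1^{(1)})=1$ with negative span $U_1^{(1)}$ (where $L_1^{(1)}U_1^{(1)}=rQ$, $K_{11}^{(1)}<0$) removed by $\langle f, x_iQ\rangle=0$, while $\mathrm{ind}(L_2^{(1)})=0$ gives coercivity of $B_2^{(1)}$ outright; for $k\ge 2$, index zero plus monotonicity. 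The paper additionally notes the cut-off/limit step needed because $U_1^{(k)}, Z_1^{(k)}$ are not in $H^1$, which you omit, but the paper itself only cites \cite{FMR2006}, \cite{MS2011} for it.

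However, your explanation of the $d=11,12$ breakdown contains a genuine error. You claim that both $\mathrm{ind}(L_1^{(2)})=1$ and $\mathrm{ind}(L_2^{(1)})=1$ produce negative directions for which ``the orthogonality conditions supply no matching constraint,'' on the grounds that the constraint functions live only in the $k=0,1$ sectors. But $L_2^{(1)}$ \emph{is} a $k=1$ sector operator, and the condition $\langle \partial_{x_i}Q, g\rangle_{1\le i\le d}=0$ is precisely a $k=1$ constraint on $g$: the radial profile of $\partial_{x_i}Q$ is $Q_r$, and since $L_2^{(1)}Z_1^{(1)}=Q_r$, orthogonality to $\partial_{x_i}Q$ is exactly $B_2^{(1)}$-orthogonality to $Z_1^{(1)}$. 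Table \ref{J1} gives $J_{11}^{(1)}<0$ for $d=11,12$, so $Z_1^{(1)}$ spans the single negative direction of $B_2^{(1)}$ and the form is coercive on the constrained subspace; this is what the paper asserts when it says $B_1^{(1)}$ and $B_2^{(1)}$ are coercive ``from $K_{11}<0$ and $J_{11}<0$.'' The only genuine obstruction in $d=11,12$ is $L_1^{(2)}$: its index is $1$, the $k=2$ sector carries no orthogonality condition at all, and $K_{11}^{(2)}>0$ (Table \ref{K2}) shows the computed candidate does not capture the negative direction, whence the non-radial case is indecisive. Your final verdict is therefore correct, but the reasoning misidentifies $L_2^{(1)}$ as a second obstruction and, in doing so, fails to use the sign change of $J_{11}^{(1)}$, which is the key numerical input that keeps $B_2$ under control in those dimensions (and which the paper returns to in Remark 3.13 and in the proof of Theorem \ref{T:nonradial2}).
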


\begin{proof}
We only outline the main idea, as it follows the proof in \cite{FMR2006} and \cite{MS2011}. For $d\leq 10$, $\mathrm{ind}(L_1^{(1)})=1,$ and we see $K_{11}<0$ from Table \ref{K1}. Thus, $U_1^{(1)}$ generates the negative span. For any $f \in \lbrace U_1^{(1)} \rbrace^{\perp}$ in the $B_1^{(1)}$ sense, $B_1^{(1)}(f,f)>0$. For $L_1^{(2)}$ and $L_2^{(1)}$, $\mathrm{ind}(L_1^{(2)})=\mathrm{ind}(L_2^{(1)})=0$, thus $B_1^{(2)}$ and $B_2^{(1)}$ are coercive. The coercivity of both $B_1$ and $B_2$ implies the spectral property in the given space $\mathcal{U} \times \mathcal{V}$. We need to note that $f$ and $g$ may not necessarily in $H^1_r$, since from the Proposition 3.9, $(1+r^{d-2+k})f \in L^{\infty}(\mathbb{R}^d)$ and the same for $g$. Similar to the radial case we discuss above, we need to introduce an appropriate cut-off function and then take the limit as what the authors did in \cite{FMR2006}, \cite{MS2011} or \cite{SZ2011}.

For the case $d=11$ or $d=12$. $B_1^{(1)}$ and $B_2^{(1)}$ are coercive on the subspace of $H^1_r$ that satisfies the orthogonal conditions from $K_{11}<0$ and $J_{11}<0$. However, when $k=2$, the index of $L_1^{(2)}$ is still one and we get the positive values from the bilinear forms (see Table \ref{K2}). Thus, the coercivity of $B_1^{(2)}$ becomes indecisive. Hence, with current computations, we can only show the spectral property under the radial assumption for $d=11$ or $d=12$.
\end{proof}

\begin{theorem}\label{T:nonradial2}
Let the space $\mathcal{U}=\lbrace Q, Q_1, x_iQ \rbrace ^{\perp} \subseteq H^1$, and the space $\mathcal{V}=\lbrace Q, Q_{x_i} \rbrace^{\perp} \subseteq H^1$, where $i=1,2,\cdots,d$. In the dimensions $d \leq 10$, the Spectral Property 2 holds on the given subspace $\mathcal{U} \times \mathcal{V} \subseteq H^1\times H^1 $. In the dimension $d=11$ or $d=12$, the Spectral Property 2 holds only on the given subspace $\mathcal{U} \times \mathcal{V} \subseteq H^1_r\times H^1_r $. It is indecisive for the non-radial case.
\end{theorem}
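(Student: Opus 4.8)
The plan is to mirror the proof of Theorem \ref{T:nonradial1}, exploiting that the two blocks decouple, $B(u,u)=B_1(f,f)+B_2(g,g)$, and that each form block-diagonalizes over spherical harmonics, $B_{1,2}(h,h)=\sum_{k\geq 0}B_{1,2}^{(k)}(h_k,h_k)$. Since the orthogonality conditions imposed on $f$ in the Spectral Property 2 are identical to those in the Spectral Property 1, the entire $B_1$-analysis — including the dimensional restriction — imports verbatim from Theorem \ref{T:nonradial1}: the radial sector ($k=0$) is controlled by the two negative directions $U_1,U_2$ with $K_{11},K_{22}<0$ and $KK>0$ (Table \ref{K0}), the $k=1$ sector by $U_1^{(1)}$ with $K_{11}^{(1)}<0$ (Table \ref{K1}), and the $k\geq 2$ sectors by coercivity once $\mathrm{ind}(L_1^{(2)})=0$ together with monotonicity of the index in $k$. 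The only breakdown is at $k=2$ in $d=11,12$, where $\mathrm{ind}(L_1^{(2)})=1$ but $K_{11}^{(2)}>0$ (Table \ref{K2}) fails to exhibit the negative direction — exactly what forces the non-radial case to stay indecisive. Thus I would record $B_1(f,f)>0$ on $\mathcal{U}$ for $d\leq 10$, and on $\mathcal{U}\cap H^1_r$ for $d=11,12$.

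The genuinely new work is the $B_2$-block, where the two radial conditions $\langle Q_1,g\rangle=\langle Q_2,g\rangle=0$ of Theorem \ref{T:nonradial1} are replaced by the single radial condition $\langle Q,g\rangle=0$ together with $\langle \partial_{x_i}Q,g\rangle=0$. Decomposing $g=g_0+g_1+\sum_{k\geq 2}g_k$, for the radial sector I would invoke $\mathrm{ind}(L_2)=1$ from Proposition \ref{P:indexL} together with Remark \ref{R:1}: solving $L_2Z=Q$ (invertibility from Proposition \ref{P:Inv-radial}) and reading from Table \ref{T:L2Q} that $\langle L_2Z,Z\rangle=\langle Q,Z\rangle<0$ shows that $Z$ realizes the unique negative direction of $B_2^{(0)}$. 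Then for any radial $g_0\perp Q$ self-adjointness gives $B_2^{(0)}(g_0,Z)=\langle g_0,L_2Z\rangle=\langle g_0,Q\rangle=0$, so $g_0$ lies in the $B_2$-orthogonal complement of a vector on which $B_2$ is negative; since the index is one, this forces $B_2^{(0)}(g_0,g_0)>0$. For $k=1$ the condition $\langle \partial_{x_i}Q,g_1\rangle=0$ reduces to $\langle Q_r,g_1\rangle=0$: when $d\leq 10$, $\mathrm{ind}(L_2^{(1)})=0$ (Proposition \ref{P:ind-nonradial}) makes $B_2^{(1)}$ already coercive, while for $d=11,12$, $\mathrm{ind}(L_2^{(1)})=1$ and $J_{11}^{(1)}<0$ (Table \ref{J1}), so $Z_1^{(1)}$ captures the single negative direction and $\langle Q_r,g_1\rangle=0$ yields positivity exactly as in the radial sector. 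For $k\geq 2$, monotonicity of the index and $\mathrm{ind}(L_2^{(2)})=0$ give coercivity of every $B_2^{(k)}$. Hence $B_2(g,g)>0$ on all of $\mathcal{V}$ for every $d\leq 12$, radial or not.

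Combining the two blocks yields $B(u,u)=B_1(f,f)+B_2(g,g)>0$ on $\mathcal{U}\times\mathcal{V}$ for $d\leq 10$, and on $(\mathcal{U}\cap H^1_r)\times(\mathcal{V}\cap H^1_r)$ for $d=11,12$; the sole obstruction to the non-radial conclusion in $d=11,12$ is the $B_1^{(2)}$ sector, precisely as in Theorem \ref{T:nonradial1}, whereas the $B_2$-block never obstructs. As the final step, I would upgrade strict positivity on these closed subspaces to the quantitative bound \eqref{E:Bpositive} by invoking the stability of the index under the exponentially weighted perturbation $-\delta_0 e^{-|x|}$ (the stability propositions cited to \cite{FMR2006}, \cite{MS2011}), which makes positivity equivalent to the claimed coercive lower bound.

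The main obstacle I anticipate is the radial $B_2$ step: one must be certain that a single orthogonality condition genuinely annihilates the unique negative direction, and this rests entirely on the sign $\langle L_2Z,Z\rangle<0$. Because $L_2$ carries the potential $V_2=\frac{2}{d}Q^{\frac4d-1}rQ_r$, which involves a negative power of $Q$ and is numerically delicate in high dimensions (cf. the dedicated computation of $V_2$ in Appendix C), the reliability of that sign — and thus of the whole argument — is the sensitive point; this is also why Table \ref{T:L2Q} is generated with the $u(\infty)=0$ boundary condition rather than $u_r(L)=0$, the two already producing different signs in $d=4,5$. A secondary but routine technical point is that the functions $Z,Z_1^{(1)}$ obtained by inverting $L_2^{(k)}$ decay only polynomially and are not a priori in $H^1_r$, so one introduces a cut-off and passes to the limit, exactly as in \cite{FMR2006}, \cite{MS2011}, \cite{SZ2011}.
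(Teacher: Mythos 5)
Your proposal is correct and follows essentially the same route as the paper's own (much terser) proof: the radial $B_2$ sector is handled by exhibiting the unique negative direction via $L_2 Z = Q$ with $\langle L_2Z,Z\rangle<0$ from Table \ref{T:L2Q} together with $\mathrm{ind}(L_2^{(0)})=1$, and all non-radial sectors, including the $B_1^{(2)}$ obstruction in $d=11,12$, are imported verbatim from Theorem \ref{T:nonradial1}. Your write-up simply makes explicit the spherical-harmonic decomposition, the self-adjointness step $B_2^{(0)}(g_0,Z)=\langle g_0,Q\rangle=0$, and the cut-off and index-stability arguments that the paper invokes by reference.
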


\begin{proof}
The argument follows the proof of Theorem \ref{T:nonradial1}. From Table \ref{T:L2Q}, one can see that the span of $\tilde{Z}$, where $L_2\tilde{Z}=Q$, is negative. Combining with the fact that $\mathrm{Ind}(L^{(0)}_2)=1$, we conclude that $\tilde{Z}$ is the only negative span we have. Therefore, the coercivity of $B_2^{(0)}$ is reached on the subspace orthogonal to $Q$. For the non-radial case, the proof is the same as the orthogonal conditions to extend to the nonradial setting are the same as in the Theorem \ref{T:nonradial1}, and thus, finishing the proof. 
\end{proof}

\begin{remark}
In the dimensions $d=11$ and $d=12$, $\mathrm{ind}(L_2^{(1)})=1$ instead of the zero index, which is not what we obtained in the lower dimensions. We double checked this with the standard 4th order explicit Runge-Kutta method (RK4), taking the step size $h=0.001$ in obtaining solutions to \eqref{index eqn sh}. This led to the same results as in the above calculations via the matlab solver ``\texttt{ode45}". Table \ref{J1}  shows the negative values for $J_{11}^{(1)}$ in $d=11$ and $d=12$, which also suggests that the index of $L_2^{(1)}$ is not zero in those dimensions. We also numerically calculated the negative eigenvalue of $L_2^{(1)}$: $\lambda=-0.01403$ in the dimension $d=11$, and $\lambda=-0.0560$ in the dimension $d=12$. This can be compared with the situation of the index of $L^{(0)}_1$ in the dimensions $d=2$ and $d=3$, $\mathrm{ind}(L_1^{(0)})=1$ when $d=2$ but $\mathrm{ind}(L_1^{(0)})=2$ when $d=3$, see \cite{FMR2006} and \cite{FMR2008}.
\end{remark}

\section{Conclusions}

In this paper we first discussed direct numerical simulations of the generic blow-up solutions in the $L^2$-critical NLS equation in higher dimensions ($d = 4, ..., 12$) under the radial symmetry assumption. Our results show that the ``log-log" law is universal for all $L^2$-critical NLS equations (at least up to $d=12$). 
Secondly, we investigated the Spectral Property 1 in higher dimensions, which is the essential part of the analytical proof of the ``log-log" regime for the cases when the mass of the negative energy initial data is slightly above the mass of $Q$, the corresponding ground state $Q$ for the given dimension and nonlinearity. We confirm that the Spectral Property 1 (as well as a modified version of it) holds from $d=5$ to $d=10$ in a general case, and for $d=11$ and $d=12$, at least, in the radial case. Therefore, we conclude that the ``log-log" blow-up regime is the stable blow-up regime in $d \leq 10$ and radially stable in $d \leq 12$.  

\section*{Appendix A: Justification of ``log-log" corrections in $d = 6, ..., 12$}

Here, we list different functional fittings $F(s)$ for the correction term in the blow-up rate \eqref{FL} in the dimensions $d=6,..., 12$. For dimensions $d=6$ and $7$ we list our computations for $\rho_i$ with $F(s) = 1, \big(\ln \frac1{s} \big)^{\gamma}$, $\gamma = 0.25, 0.2, 0.15$ and $\ln \ln \frac1{s}$ in Tables \ref{loglog comparison 6d} and  \ref{loglog comparison 7d}. 
%, similar to our investigations in $d=4, 5$. 
\begin{table}[h]
\begin{tabular}{|c|c|c|c|c|c|c|}
\hline
\multicolumn{7}{|c|}{The fitting power $\rho_i$ from different corrections $F(s)$}\\
\hline
$i$ &$\frac{1}{L(t)}$ range 
& $1$ 
%& $(\ln \frac{1}{s})^{0.3}$
& $(\ln \frac{1}{s})^{0.25}$
& $(\ln \frac{1}{s})^{0.2}$
& $(\ln \frac{1}{s})^{0.15}$
& $\ln \ln \frac{1}{s}$ \\
\hline
$0$&$2e4 \sim 7e5 $&$0.5104$&$0.5010$&$0.5029$&$0.5048$&$0.4959$\\
\hline
$1$&$2e4 \sim 7e5 $&$0.5064$&$0.5003$&$0.5015$&$0.5027$&$0.4984$\\
\hline
$2$&$7e5 \sim 2e7 $&$0.5045$&$0.5000$&$0.5009$&$0.5018$&$0.4990$\\
\hline
$3$&$2e7 \sim 4e8 $&$0.5035$&$0.4998$&$0.5005$&$0.5013$&$0.4993$\\
\hline
$4$&$4e8 \sim 8e9 $&$0.5028$&$0.4997$&$0.5003$&$0.5009$&$0.4994$\\
\hline
$5$&$8e9 \sim 1e11 $&$0.5024$&$0.4996$&$0.5002$&$0.5007$&$0.4995$\\
\hline
$6$&$1e11 \sim 3e12 $&$0.5020$&$0.4996$&$0.5001$&$0.5006$&$0.4996$\\
\hline
$7$&$3e12 \sim 4e13 $&$0.5018$&$0.4996$&$0.5000$&$0.5005$&$0.4996$\\
\hline
$8$&$4e13 \sim 6e14 $&$0.5016$&$0.4996$&$0.5000$&$0.5004$&$0.4997$\\
\hline
$9$&$6e14 \sim 9e15 $&$0.5014$&$0.4996$&$0.5000$&$0.5003$&$0.4997$\\
\hline
\end{tabular}
\linebreak
\linebreak
\caption{Comparison of the different functional forms $F(s)$ for the correction term in $d=6$. ``$\frac{1}{L(t)}$ range" means the values $\frac{1}{L(t_i)} \sim \frac{1}{L(t_{i+1})}$.}
\label{loglog comparison 6d}
\end{table}
\begin{comment}
\begin{table}[h]
\begin{tabular}{|c|c|c|c|c|}
\hline
\multicolumn{5}{|c|}{The standard deviation $\epsilon$ from different corrections}\\
\hline
$i$ &$\frac{1}{L(t)}$ range 
& $F(s)=1$ 
&$F(s)=(\ln \frac{1}{s})^{0.3}$
& $F(s)=\ln \ln \frac{1}{s}$ \\
\hline
$0$&$2e4 \sim 7e5 $&$0.0087$&$8.47e-4$&$0.0031$ \\
\hline
$1$&$7e5 \sim 2e7 $&$0.0075$&$8.81e-4$&$0.0026$ \\
\hline
$2$&$2e7 \sim 4e8 $&$0.0068$&$9.00e-4$&$0.0023$ \\
\hline
$3$&$4e8 \sim 8e9 $&$0.0062$&$9.07e-4$&$0.0021$ \\
\hline
$4$&$8e9 \sim 1e11 $&$0.0057$&$9.07e-4$&$0.0019$ \\
\hline
$5$&$1e11 \sim 3e12 $&$0.0054$&$9.02e-4$&$0.0018$ \\
\hline
$6$&$3e12 \sim 4e13 $&$0.0050$&$8.95e-4$&$0.0016$ \\
\hline
$7$&$4e13 \sim 6e14 $&$0.0048$&$8.85e-4$&$0.0016$ \\
\hline
$8$&$6e14 \sim 9e15 $&$0.0046$&$8.75e-4$&$0.0015$ \\
\hline
\end{tabular}
\linebreak
\linebreak
\caption{Comparison of the (cumulative up to step $i$) standard deviation error $\epsilon_i$ for different functional corrections in $d=6$. 
The log-log correction minimizes the deviation.}
\label{deviation-loglog comparison 6d}
\end{table}
\end{comment}
\begin{table}[h]
\begin{tabular}{|c|c|c|c|c|c|c|}
\hline
\multicolumn{7}{|c|}{The fitting power $\rho_i$ from different corrections $F(s)$}\\
\hline
$i$ &$\frac{1}{L(t)}$ range 
& $1$ 
%& $(\ln \frac{1}{s})^{0.3}$
& $(\ln \frac{1}{s})^{0.25}$
& $(\ln \frac{1}{s})^{0.2}$
& $(\ln \frac{1}{s})^{0.15}$
& $\ln \ln \frac{1}{s}$ \\
\hline
$0$&$2e2 \sim 1e4 $&$0.5104$&$0.4995$&$0.5016$&$0.5038$&$0.4924$ \\
\hline
$1$&$1e4 \sim 4e5 $&$0.5063$&$0.4997$&$0.5010$&$0.5023$&$0.4973$ \\
\hline
$2$&$4e5 \sim 9e6 $&$0.5044$&$0.4996$&$0.5006$&$0.5015$&$0.4985$ \\
\hline
$3$&$9e6 \sim 2e8 $&$0.5034$&$0.4996$&$0.5003$&$0.5011$&$0.4990$ \\
\hline
$4$&$2e8 \sim 5e9 $&$0.5028$&$0.4995$&$0.5002$&$0.5008$&$0.4992$ \\
\hline
$5$&$5e9 \sim 1e11 $&$0.5023$&$0.4995$&$0.5001$&$0.5006$&$0.4994$ \\
\hline
$6$&$1e11 \sim 2e12 $&$0.5020$&$0.4995$&$0.5000$&$0.5005$&$0.4995$ \\
\hline
$7$&$2e12 \sim 3e13 $&$0.5017$&$0.4995$&$0.5000$&$0.5004$&$0.4995$ \\
\hline
$8$&$3e13 \sim 5e14 $&$0.5015$&$0.4995$&$0.4999$&$0.5003$&$0.4996$ \\
\hline
$9$&$5e14 \sim 9e15 $&$0.5014$&$0.4995$&$0.4999$&$0.5003$&$0.4996$ \\
\hline
\end{tabular}
\linebreak
\linebreak
\caption{Comparison of the different functional forms $F(s)$ for the correction term in $d=7$. ``$\frac{1}{L(t)}$ range" means the values $\frac{1}{L(t_i)} \sim \frac{1}{L(t_{i+1})}$.}
\label{loglog comparison 7d}
\end{table}
One can observe that log-log shows better stabilization in the approximation of the power $\rho_i$, though, the other approximation seem to reach $0.5$ at some time period and stay there for a while, but then keep decreasing away from it.  

We also give $l^2$ discrepancy $\epsilon_i$ results (for the last three steps from $j_0 = 7$ to $i=9$) in Table \ref{deviation-comparison-67d} for both $d=6,7$. It seems that the smallest error is given by the power $\gamma = 0.2$, which can be explained similarly as in the dimension $d=4$ (during the studied time interval, this functional approximation decreases down to $0.5$, but then it will continue decreasing). Similar comments can be made about other powers of $\gamma$. 
\begin{table}[ht]
\begin{tabular}{|c|c|c|c|c|c|}
\hline
$d$
&$F(s)=1$
%&$F(s)=(\ln \frac{1}{s})^{0.3}$  
&$F(s)=(\ln \frac{1}{s})^{0.25}$  
&$F(s)=(\ln \frac{1}{s})^{0.2}$  
&$F(s)=(\ln \frac{1}{s})^{0.15}$  
&$F(s)=\ln \ln \frac{1}{s}$ \\
\hline
$6$&$0.0017$&$4.04e-4$&$5.56e-5$&$4.51e-4$&$3.71e-4$\\
\hline
$7$&$0.0017$&$4.76e-4$&$6.18e-5$&$3.88e-4$&$4.50e-4$\\
\hline
\end{tabular}
\linebreak
\linebreak
\caption{The $l^2$ discrepancy $\epsilon_i$ calculated from $j_0=7$ to $i=9$ for the fittings given in Tables \ref{loglog comparison 6d} - \ref{loglog comparison 7d} in $d=6$ and $d=7$.} 
%This window means we are attempting to get closer to the blow-up time.}
\label{deviation-comparison-67d}
\end{table}

For higher dimensions, for brevity, we list $F(s)=1$ and $F(s)= \ln \ln \frac1{s}$ results, see Tables \ref{loglog comparison 8d}, \ref{loglog comparison 9d}, \ref{loglog comparison 10d}, \ref{loglog comparison 11d}, \ref{loglog comparison 12d} for dimensions $d=8, 9, 10, 11, 12$, correspondingly.  
{\small {
\begin{table}[h]
\begin{tabular}{|c|c|c|c|}
\hline
$i$ &$\frac{1}{L(t)}$ range 
& $\rho_i$ from $F(s)=1$ 
& $\rho_i$ from $F(s)=\ln \ln \frac{1}{s}$  \\
\hline
$0$&$7e3 \sim 3e5 $&$0.5069$&$0.4974$ \\
\hline
$1$&$3e5 \sim 8e6 $&$0.5047$&$0.4986$ \\
\hline
$2$&$8e6 \sim 2e8 $&$0.5035$&$0.4991$ \\
\hline
$3$&$2e8 \sim 5e9 $&$0.5028$&$0.4993$ \\
\hline
$4$&$5e9 \sim 1e11 $&$0.5024$&$0.4994$ \\
\hline
$5$&$1e11 \sim 2e12 $&$0.5020$&$0.4995$ \\
\hline
$6$&$2e12 \sim 4e13 $&$0.5017$&$0.4996$ \\
\hline
$7$&$4e13 \sim 8e14 $&$0.5015$&$0.4996$ \\
\hline
$8$&$8e14 \sim 1e16 $&$0.5014$&$0.4996$ \\
\hline
\end{tabular}
\linebreak
\linebreak
\caption{Comparison of the functional fitting in $d=8$.}
%with and without the log-log correction in $d=8$. ``$\frac{1}{L(t)}$ range" means the value at $\frac{1}{L(t_i)} \sim \frac{1}{L(t_{i+1})}$.}
\label{loglog comparison 8d}
\end{table}
\begin{table}[ht]
\begin{tabular}{|c|c|c|c|}
\hline
$i$ &$\frac{1}{L(t)}$ range 
& $\rho_i$ from $F(s)=1$ 
& $\rho_i$ from $F(s)=\ln \ln \frac{1}{s}$  \\
\hline
$0$&$4e3 \sim 2e5 $&$0.5069$&$0.4968$ \\
\hline
$1$&$2e5 \sim 7e6 $&$0.5047$&$0.4983$ \\
\hline
$2$&$7e6 \sim 2e8 $&$0.5035$&$0.4989$ \\
\hline
$3$&$2e8 \sim 5e9 $&$0.5028$&$0.4992$ \\
\hline
$4$&$5e9 \sim 1e11 $&$0.5023$&$0.4993$ \\
\hline
$5$&$1e11 \sim 2e12 $&$0.5020$&$0.4994$ \\
\hline
$6$&$2e12 \sim 5e13 $&$0.5017$&$0.4995$ \\
\hline
$7$&$5e13 \sim 9e14 $&$0.5015$&$0.4996$ \\
\hline
$8$&$9e14 \sim 2e16 $&$0.5013$&$0.4996$ \\
\hline
\end{tabular}
\linebreak
\linebreak
\caption{Comparison of the functional fitting in $d=9$.}
%with and without the log-log correction in $d=9$. ``$\frac{1}{L(t)}$ range" means the values $\frac{1}{L(t_i)} \sim \frac{1}{L(t_{i+1})}$.}
\label{loglog comparison 9d}
\end{table}
}}

{\small {
\begin{table}[ht]
\begin{tabular}{|c|c|c|c|}
\hline
$i$ &$\frac{1}{L(t)}$ range 
& $\rho_i$ from $F(s)=1$ 
& $\rho_i$ from $F(s)=\ln \ln \frac{1}{s}$  \\
\hline
$0$&$3e3 \sim 2e5 $&$0.5065$&$0.4959$ \\
\hline
$1$&$2e5 \sim 5e6 $&$0.5044$&$0.4979$ \\
\hline
$2$&$5e6 \sim 2e8 $&$0.5033$&$0.4986$ \\
\hline
$3$&$2e8 \sim 4e9 $&$0.5026$&$0.4990$ \\
\hline
$4$&$4e9 \sim 1e11 $&$0.5022$&$0.4992$ \\
\hline
$5$&$1e11 \sim 2e12 $&$0.5019$&$0.4993$\\
\hline
$6$&$2e12 \sim 5e13 $&$0.5016$&$0.4994$ \\
\hline
$7$&$5e13 \sim 9e14 $&$0.5014$&$0.4995$ \\
\hline
$8$&$9e14 \sim 2e16 $&$0.5013$&$0.4996$ \\
\hline
\end{tabular}
\linebreak
\linebreak
\caption{Comparison of the functional fitting in $d=10$.}
%for with or without the log-log correction in $d=10$. ``$\frac{1}{L(t)}$ range" means the values $\frac{1}{L(t_i)} \sim \frac{1}{L(t_{i+1})}$. One can see the blow-up rate $\rho$ fits better if the correction term is included.}
\label{loglog comparison 10d}
\end{table}
\begin{table}[h]
\begin{tabular}{|c|c|c|c|}
\hline
$i$ &$\frac{1}{L(t)}$ range 
& $\rho_i$ from $F(s)=1$ 
& $\rho_i$ from $F(s)=\ln \ln \frac{1}{s}$  \\
\hline
$0$&$2e3 \sim 1e5 $&$0.5060$&$0.4945$ \\
\hline
$1$&$1e5 \sim 4e6 $&$0.5041$&$0.4972$ \\
\hline
$2$&$4e6 \sim 1e8 $&$0.5031$&$0.4982$ \\
\hline
$3$&$1e8 \sim 3e9 $&$0.5025$&$0.4987$ \\
\hline
$4$&$3e9 \sim 7e10 $&$0.5021$&$0.4990$ \\
\hline
$5$&$7e10 \sim 2e12 $&$0.5018$&$0.4992$ \\
\hline
$6$&$2e12 \sim 4e13 $&$0.5015$&$0.4993$ \\
\hline
$7$&$4e13 \sim 8e14 $&$0.5014$&$0.4994$ \\
\hline
$8$&$8e14 \sim 2e16 $&$0.5012$&$0.4995$ \\
\hline
\end{tabular}
\linebreak
\linebreak
\caption{Comparison of the functional fitting in $d=11$.}
%for with or without the log-log correction in the case $d=11$. ``$\frac{1}{L(t)}$ range" means the values $\frac{1}{L(t_i)} \sim \frac{1}{L(t_{i+1})}$. One can see the blow-up rate $\rho$ fits better if the correction term is included.}
\label{loglog comparison 11d}
\end{table}
\begin{table}[h]
\begin{tabular}{|c|c|c|c|}
\hline
$i$ &$\frac{1}{L(t)}$ range 
& $\rho_i$ from $F(s)=1$ 
& $\rho_i$ from $F(s)=\ln \ln \frac{1}{s}$  \\
\hline
$0$&$2e3 \sim 9e4 $&$0.5059$&$0.4940$ \\
\hline
$1$&$9e4 \sim 3e6 $&$0.5040$&$0.4970$ \\
\hline
$2$&$3e6 \sim 1e8 $&$0.5030$&$0.4981$ \\
\hline
$3$&$1e8 \sim 3e9 $&$0.5024$&$0.4987$ \\
\hline
$4$&$3e9 \sim 8e10 $&$0.5020$&$0.4990$ \\
\hline
$5$&$8e10 \sim 2e12 $&$0.5017$&$0.4992$ \\
\hline
$6$&$2e12 \sim 4e13 $&$0.5015$&$0.4993$ \\
\hline
$7$&$4e13 \sim 1e15 $&$0.5014$&$0.4994$ \\
\hline
$8$&$1e15 \sim 2e16 $&$0.5012$&$0.4995$ \\
\hline
\end{tabular}
\linebreak
\linebreak
\caption{Comparison of the functional fitting in $d=12$.} 
%for with or without the log-log correction in $d=12$. ``$\frac{1}{L(t)}$ range" means the values $\frac{1}{L(t_i)} \sim \frac{1}{L(t_{i+1})}$. One can see the blow-up rate $\rho$ fits better if the correction term is included.}
\label{loglog comparison 12d}
\end{table}
}}

We conclude this section with providing the $l^2$ discrepancy error (computed for the last three steps) for dimensions $d = 8, ..., 12$ in Table \ref{deviation-comparison-8-12d}. The error $\epsilon_i$ stays on the same order $\sim 10^{-4}$ as in all previous dimensions.
{\small {
\begin{table}[h]
\begin{tabular}{|c|c|c|}
\hline
$d$
&$F(s)=1$
&$F(s)=\ln \ln \frac{1}{s}$ \\
\hline
$8$&$0.0017$&$4.23e-4$\\
\hline
$9$&$0.0016$&$4.69e-4$\\
\hline
$10$&$0.0016$&$5.43e-4$\\
\hline
$11$&$0.0015$&$6.48e-4$\\
\hline
$12$&$0.0014$&$6.87e-4$\\
\hline
\end{tabular}
\linebreak
\linebreak
\caption{The $l^2$ discrepancy $\epsilon_i$ calculated from $j_0=7$ to $i=9$ for the fittings given in Tables \ref{loglog comparison 8d} - \ref{loglog comparison 12d} in $d=8,9,10,11,12$.}
\label{deviation-comparison-8-12d}
\end{table}
}}

\section*{Appendix B: Artificial boundary conditions}\label{app artificial}

The BVP problem (\ref{bvp eqn}) and (\ref{bvp eqn2}) involves the boundary condition $u(\infty)=0$. We use the approach in \cite{MS2011} to construct the artificial boundary condition to approximate $u(\infty)=0$.

As $V_{1,2}$ and $f$ decay exponentially fast, as it was discussed before, for $r\gg 1$ the solutions of (\ref{bvp eqn2}) converge to their linear flows:
\begin{align}
-\partial_{rr}u-\dfrac{d-1+2k}{r}\partial_ru=0, \quad r\gg 1.
\end{align}
This equation has the explicit solution
\begin{align}
u=\dfrac{C_1}{r^{d-2+k}}+C_2:=C_1\phi_1+C_2\phi_2,
\end{align}
where $\phi_1=\frac{1}{r^{d-2+k}}$ and $\phi_2=1$. Since $u(\infty)=0$, only span $\phi_1$ remains. Consequently, at $r=L$, the solution to (\ref{bvp eqn2}) must be linearly dependent on $\phi_1$. Therefore, their Wronskian should equal to $0$, i.e.,
\begin{align} \mathrm{det} \left(
\begin{matrix}
u(L) & \, \phi_1(L) \\
u_r(L) & \, \phi_1'(L)\\
\end{matrix} \right) =0,
\end{align}
which is
\begin{align}\label{abc bvp}
u(L)+\frac{L}{d-2+2k}u_r(L)=0.
\end{align}
Therefore, the boundary condition $u(\infty)=0$ is substituted by $u(L)+\frac{L}{d-2+2k} \, u_r(L)=0$ at the length of the interval $r=L$.
\bigskip

\section*{Appendix C: Computation of the potentials $V_{1}$ and $V_2$}\label{App: Potential}

When $d\geq 5$, the term $Q^{\frac{4}{d}-1}$ in $V_{1,2}$ has the negative power. We rewrite it as follows, for example for $V_2$
\begin{align*}
V_2=\frac{2}{d} \, Q^{\frac{4}{d}} \, r \, \frac{Q_r}{Q}.
\end{align*}

Recall that the ground state $Q$ decays with the rate $Q \sim r^{-\frac{d-1}{2}}e^{-r}$ for $r\gg 1$ (e.g. see \cite{MS2011}). Consequently, $Q_r \sim -Q$ and only differs by a polynomial power, and $V_2$ decays with an exponential rate $V_2 \sim e^{-\frac{4}{d}r}$ for $r \gg 1$.
However, if we compute the $V_2$ from (1.08), instead of keep decreasing, the potential term $V_2$ becomes oscillating at the magnitude $10^{-4}$ when $r \geq 40$ in $d=8$. This fails to describe the ``fast decay" property of the potential terms and may cause trouble in the process of finding the index of $L_{1,2}^{(k)}$ (according to \cite{RS1970}, $L_{1,2}$ has infinitely many negative eigenvalues if the potential decays slower than $\frac{1}{r^2}$). In fact, this issue comes from the numerical calculation of the ground state $Q$. When we compute the ground state $Q$, the $Q$ itself stops decreasing when it reaches the magnitude of $10^{-14}$, since it is less than the tolerance we set for the fixed point iteration and it approaches the machine accuracy. Nevertheless, $(10^{-14})^{\frac{4}{d}}$ could be a relatively large number, especially when the dimension $d$ is large, say $d \geq 8$.

While we cannot find a way to refine the numerical method for finding the ground state $Q$ as it is below our tolerance and approaching the machine accuracy, we come up with an alternative way to compute the potential terms, say $V_2$. We outline it next.

Consider the function
\begin{align}\label{P}
P(r)=Q(r)\, e^{r},
\end{align}
Where $P$ decays with a polynomial rate $r^{-\frac{d-1}{2}}$, since $Q \sim r^{-\frac{d-1}{2}}e^{-r}$ for $r\gg 1$. Then, a straightforward calculation gives us
\begin{align}\label{V2P}
V_2= \frac{2}{d}\, P^{\frac{4}{d}} \, r \, \left( \frac{P_r}{P}-1 \right)e^{-\frac{4}{d}}.
\end{align}
This implies that if we find the profile of $P$, which decays polynomially, then $P$ can not be too small for $r\gg 1$. Moreover, the last term $e^{-\frac{4}{d}}$ describes an exponential decay for the potential $V_2$.

Putting the expression $P(r)=Q(r)e^r$ into the ground state equation (\ref{GS}), we have
\begin{align}\label{GSP}
\begin{cases}
P_{rr} -2P_r + \dfrac{d-1}{r} \left( P_r-P\right) + P^{\frac{4}{d}+1} e^{-\frac{4}{d}r}=0, \qquad (P>0), \\
P_r(0)-P(0)=0, \\
P(\infty)=0.
\end{cases}
\end{align}

We can construct the artificial boundary condition to approximate $P(\infty)=0$ the same way as for $L_{1,2}u=f$. As described in Section \ref{S:SP}, the linear flow of the equation (\ref{GSP}) gives us
\begin{align}
P_{rr} -2P_r + \frac{d-1}{r} \left( P_r-P\right)=0.
\end{align}
The solutions of the free equation decay with the leading order $P \sim r^{-\frac{d-1}{2}}$. Therefore, we can construct the artificial boundary condition at $r=L$:
\begin{align}
\mathrm{det} \left( \begin{matrix}
P(r) & r^{-\frac{d-1}{2}}\\
P_r(r) & -\frac{d-1}{2} \, r^{-\frac{d+1}{2}}
\end{matrix} \right)=0,
\end{align}
i.e.,
\begin{align}
P+\frac{2L}{d-1}P_r=0.
\end{align}

We use the Chebyshev differential matrices to discretize the first and second derivatives $P_r$ and $P_{rr}$, then the discretized equation (\ref{GSP}) changes to the %set of 
nonlinear system
\begin{align}\label{GSPN}
\mathbf{MP}-\mathbf{f(P)}=\mathbf{0},
\end{align}
with the first and last rows substituted by the prescribed boundary conditions. This system \eqref{GSPN} can be solved, for example, by the matlab solver ``\texttt{fsolve}".

Figure \ref{V2f} shows the comparison of the profiles for $V_2$ when $d=8$ and $r\gg 1$.
Table \ref{QPV2} shows the difference of the ground state $Q$ and $Pe^{-r}$, as well as the difference of $V_2$ calculated from $Q$ and $P$, denoted by $V_2$ and $\tilde{V}_2$ correspondingly. Note how this method allows us to describe the ``exponential" decay and completely avoid oscillations!

\begin{figure}[ht]
\begin{center}
\includegraphics[width=0.8\textwidth]{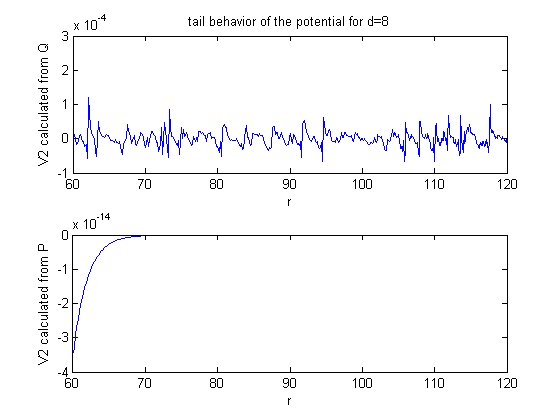}
\caption{Upper: the value of $V_2$ calculated directly from $Q$; lower: $V_2$ obtained from $P$. Our method calculating $V2$ from $P$ completely avoids oscillations.}
\end{center}
\label{V2f}
\end{figure}

\begin{table}
\begin{tabular}{|c|c|c|c|c|c|c|c|c|}
\hline
d&$5$&$6$&$7$&$8$&$9$&$10$&$11$&$12$\\
\hline
$\|Q-Pe^{-r}\|_{\infty}$&$4e-09$&$5e-09$&$7e-09$&$1e-8$&$3e-08$&$4e-7$&$4e-05$&$6e-3$\\
\hline
$\|V_2-\tilde{V}_2\|_{\infty}$&$1e-09$&$2e-07$&$8e-06$&$1e-4$&$1e-3$&$6e-3$& $0.0491$&$0.09$\\
\hline
\end{tabular}
\linebreak
\linebreak
\caption{Comparison of the quantities obtained in two different ways, where $\tilde{V}_2=\frac{2}{d} P^{\frac{4}{d}}r \left( \frac{P_r}{P}-1 \right)e^{-\frac{4}{d}}$. We set $L=100$, $N=1024$.}
\label{QPV2}
\end{table}

\section*{Appendix D: Comparison with prior results on the spectral property in $d=2,3,4$}\label{App: comparison}

In this part we list our numerical results for $d=2,3,4$ to confirm that we get similar results as the ones in Fibich, Merle and Rapha\"{e}l \cite{FMR2006}. One can see that while the different boundary conditions, $u'(L)=0$ and $u(\infty)=0$, generate the different outcomes, this does not affect the proof of the Spectral Property 1 in those dimensions, as the signs of the quantities under consideration remain the same, see Tables \ref{C2d}, \ref{C3d} and \ref{C4d}. We do see some differences %discrepancies
between our results and the results in \cite{FMR2006}, even when using the same boundary condition (however, this does not influence the outcome for the Spectral Property 1). Note that the situation is different in dimension 5 (and higher) as we discussed in the paper.
\begin{table}[H]
\begin{tabular}{|c|l|l|l|l|l|l|}
\hline
Boundary conditions&$K_{11}$&$K_{22}$&$K_{12}$&$J_{11}$&$J_{22}$&$J_{12}$\\
\hline
In \cite[pp7-8]{FMR2006}, $u'(L)=0$&$-0.65$&$NA$&$NA$&$0.9969$&$12.4211$&$-4.4095$\\
\hline
$u'(L)=0$ from ``cheby"&$-0.65343$&$16.0778$&$-1.3054$&$1.0058$&$12.7804$&$-4.4705$\\
\hline
$u'(L)=0$ from ``\texttt{bvp4c}"&$-0.65344$&$16.0778$&$-1.3054$&$1.0058$&$12.7804$&$-4.4705$\\
\hline
\end{tabular}
\caption{\label{C2d} Comparison of the bilinear forms for $K_{ij}$'s and $J_{ij}$'s by using different boundary conditions. This is for the case $d=2$.}
\end{table}

\begin{table}[H]
\begin{tabular}{|c|l|l|l|l|l|l|}
\hline
Boundary conditions&$K_{11}$&$K_{22}$&$K_{12}$&$J_{11}$&$J_{22}$&$J_{12}$\\
\hline
In \cite[pp7-8]{FMR2006}, $u'(L)=0$&$-2.1$&$NA$&$NA$&$5.9141$&$100.36881$&$-28.0500$\\
\hline
$u'(L)=0$ from ``cheby"&$-2.111$&$-21.058$&$1.6978$&$6.0052$&$104.3914$&$-28.6972$\\
\hline
$u'(L)=0$ from ``\texttt{bvp4c}"&$-2.111$&$-21.058$&$1.6978$&$6.0052$&$104.3913$&$-28.6972$\\
\hline
$u(\infty)=0$&$-2.6868$&$-406.7105$&$16.6004$&$3.0156$&$40.462$&$-14.8724$\\
\hline
\end{tabular}
\caption{\label{C3d} Comparison of the bilinear forms for $K_{ij}$'s and $J_{ij}$'s by using different boundary conditions. This is for the case $d=3$.}
\end{table}

\begin{table}[H]
\begin{tabular}{|c|l|l|l|l|l|l|}
\hline
Boundary conditions&$K_{11}$&$K_{22}$&$K_{12}$&$J_{11}$&$J_{22}$&$J_{12}$\\
\hline
In \cite[pp7-8]{FMR2006}, $u'(L)=0$&$-8.0178$&$-31.8102$&$4.6472$&$147$&$3404$&$-728$\\
\hline
$u'(L)=0$ from ``cheby"&$-8.0169$&$-32.0677$&$4.6791$&$158.4112$&$3764.4032$&$-792.1838$\\
\hline
$u'(L)=0$ from ``\texttt{bvp4c}"&$-8.0169$&$-32.0677$&$4.6791$&$158.4114$&$3764.4103$&$-792.1853$\\
\hline
$u(\infty)=0$&$-8.0401$&$-159.7802$&$2.9583$&$13.4391$&$200.4192$&$-73.3805$\\
\hline
\end{tabular}
\caption{\label{C4d} Comparison of the bilinear forms for $K_{ij}$'s and $J_{ij}$'s by using different boundary conditions. This is for the case $d=4$.}
\end{table}

To further justify our numerical results, we also studied the ($L^2$-supercritical) 3d cubic NLS equation ($d=3$ and $p=3$), which was discussed by Marzuola and Simpson in \cite{MS2011}. From Table \ref{Compare MS}, one can see that our numerical results match the results of Marzuola and Simpson \cite{MS2011} very well.
\begin{table}[H]
\begin{tabular}{|c|c|c|}
\hline
3d cubic case &$K_{11}$&$J_{11}$\\
\hline
Numerical results in \cite[pp14]{MS2011}&$1.04846$&$-0.662038$\\
\hline
Our numerical results by ``cheby"&$1.04846$&$-0.662038$\\
\hline
\end{tabular}
\caption{\label{Compare MS} Comparison of the bilinear forms for $K_{11}$'s and $J_{11}$'s with the results from \cite{MS2011} obtained by ``\texttt{bvp4c}", here, $L_2Z_1=Q+rQ_r$ and $J_{11}=\langle  L_2Z_1, Z_1 \rangle$. This is for the 3d cubic NLS equation ($L^2$-supercritical).}
\end{table}

\end{document}